\numberwithin{equation}{section}
\renewcommand{\geq}{\geqslant}
\renewcommand{\leq}{\leqslant}
\theoremstyle{plain}
\newtheorem{THEOREM}{Theorem}[section]
\newtheorem{theorem}[THEOREM]{Theorem}
\newtheorem{lemma}[THEOREM]{Lemma}
\newtheorem{proposition}[THEOREM]{Proposition}
\theoremstyle{definition}
\newtheorem{definition}[THEOREM]{Definition}
\theoremstyle{remark}
\newtheorem{remark}[THEOREM]{Remark}
\newtheorem{claim}[THEOREM]{Claim}
\newcommand{\thm}[1]{Theorem~\ref{#1}}
\newcommand{\lem}[1]{Lemma~\ref{#1}}
\newcommand{\prop}[1]{Proposition~\ref{#1}}
\def \a {\alpha} 
\def \b {\beta}
\def \d {\delta}
\def \g {\gamma}
\def \e {\varepsilon}
\def \l {\lambda}
\def \L {\Lambda}
\def \n {\nabla}
\def \th {\theta}
\def \w {\omega}
\def \O {\Omega}
\def\bu{{\mathbf u}}
\def \ev{\kappa} 
\def \meas{\mathcal{m}\!}
\def \bs {\boldsymbol{\sigma}}
\def \bj {{\bf j}}
\def \bk {{\bf k}}
\def \bx {{\mathbf x}}
\def \bxp {{\mathbf y}}  
\def \by {{\mathbf y}}
\def \bu {{\mathbf u}}
\def \bv {{\mathbf v}}
\def \bw {{\mathbf w}}
\def \bz {{\mathbf z}}
\def \cL {\mathcal{L}}
\def \cC {\mathcal{C}}
\def \cM {{M}} 
\def \cP {\mathbf{P}}   
\def \cF {\mathcal{F}}
\def \aL {{\mathcal L}}
\def\one{{\mathds{1}}}
\newcommand{\N}{\ensuremath{\mathbb{N}}}   
\newcommand{\Z}{\ensuremath{\mathbb{Z}}}   
\newcommand{\R}{\ensuremath{\mathbb{R}}}   
\newcommand{\T}{\ensuremath{\mathbb{T}}}   
\newcommand{\E}{\ensuremath{\mathbb{E}}}   
\def \lan {\langle}
\def \ran {\rangle}
\def \p {\partial}
\def \ra {\rightarrow}
\def \ss {\subset}
\def \bs {\backslash}
\newcommand{\steps}[2]{{\sc Step #1: {\bf #2}}.}
\DeclareMathOperator{\sgn}{sgn}
\def \loc {\mathrm{loc}}
\def \rmax  {\rho_+}
\def \rmin {\rho_-}
\def \rhomax  {\bar{\rho}}
\def \rhomin {\underline{\rho}}
\def \xmin {x_-}
\def \dx  {\, \mbox{\upshape{d}}x}
\def \dbx  {\, \mbox{\upshape{d}}\bx}
\def \dbxp  {\, \mbox{\upshape{d}}\by}  
\def \dby  {\, \mbox{\upshape{d}}\by}
\def \dt  {\, \mbox{\upshape{d}}t}
\def \dy  {\, \mbox{\upshape{d}}y}
\def \dz  {\, \mbox{d}z}
\def \dr  {\, \mbox{d}r}
\def \ds  {\, \mbox{d}s}
\def \dw  {\, \mbox{d}w}
\def \dth  {\, \mbox{d}\th}
\def \dmu  {\, \mbox{d}\mu}
\def \dxi  {\, \mbox{d}\xi}
\def \dmeas {\, \mbox{d}\meas}
\def \dbw  {\, \mbox{d}\bw}
\def \dbz  {\, \mbox{d}\bz}
\def \ddt  {\frac{\mbox{d\,\,}}{\mbox{d}t}}
\def \dD  {\mbox{D}}
\def \dd {\mbox{\upshape{d}}} 
\def \aL {\mathcal{L}}
\def\bx{{\mathbf x}}
\def\pshi{\varphi} 
\begin{document}

\title[Emergent dynamics with short-range interactions]{ Topologically-based fractional diffusion and\\emergent dynamics with short-range interactions}

\author{Roman Shvydkoy}
\address{Department of Mathematics, Statistics, and Computer Science, 
University of Illinois, Chicago}
\email{shvydkoy@uic.edu}

\author{Eitan Tadmor}
\address{Department of Mathematics, Center for Scientific Computation and Mathematical Modeling (CSCAMM), and Institute for Physical Sciences \& Technology (IPST), University of Maryland, College Park}
\email{tadmor@umd.edu}

\date{\today}

\subjclass{92D25, 35Q35, 76N10}

\keywords{flocking, alignment, collective behavior, emergent dynamics, fractional diffusion, Cucker-Smale, Motsch-Tadmor.}

\thanks{\textbf{Acknowledgment.} Research was supported in part by NSF grants 
	DMS16-13911, RNMS11-07444 (KI-Net) and ONR grant 
	N00014-1812465 (ET), and by NSF
	 grant DMS 1515705, Simons Foundation, and the College of LAS, UIC (RS). ET thanks the hospitality of Laboratoire Jacques-Louis Lions in Sorbonne University and its  support through ERC grant 740623 under the EU Horizon 2020. 
	 RS thanks Cyril Imbert for useful consultations, and \'Ecole Normale Sup\'erieure for hospitality.
}

\vspace*{-1.1cm}

\begin{abstract} We introduce a new class of models for emergent dynamics. It is based on a new communication protocol which incorporates two main features:    short-range   kernels which restrict  the communication to local metric balls, and anisotropic communication kernels,  adapted to the local density in these balls, which form \emph{topological neighborhoods}. We prove flocking behavior --- the emergence of global alignment for  regular, non-vacuous solutions of the $n$-dimensional models based on short-range topological communication.   Moreover, global  regularity (and hence unconditional flocking) of the one-dimensional model is proved via an application of a  De Giorgi-type method. To handle  the \emph{non-symmetric} singular kernels that arise  with our topological communication, we develop a new analysis for \emph{local} fractional elliptic operators, interesting for its own sake,   encountered  in the construction of our class of models.
\end{abstract}

\maketitle
\setcounter{tocdepth}{1}
\vspace*{-1.1cm}
\tableofcontents

\section{Introduction and statement of main results}
\subsection{Emergent dynamics -- long-range and short-range kernels}
A fascinating aspect of collective dynamics is self-organization, in which     higher order patterns emerge  from an underlying dynamics driven by short-range interactions. This type of collective dynamics is found in a wide variety of biological, social, and technological contexts. We investigate this phenomena in the context of canonical models for  flocking and swarming. A key feature in these models is \emph{alignment}, where a crowd described as a continuum with density $\rho(t,\bx): \R_+\times \R^n \mapsto \R_+$ aligns its macroscopic velocity, $\bu(t,\bx):\R_+\times \R^n \mapsto \R^n$,   over the local  neighborhoods ${\mathcal N}(\bx)$, 
\begin{equation}\label{e:main}
\left\{
\begin{split}
\rho_t + \n_\bx \cdot (\rho \bu) & = 0, \\
\bu_t + \bu \cdot \n_\bx \bu &= \int_{{\mathcal N}(\bx)}\phi(\bx,\by)(\bu(t,\by) - \bu(t,\bx))  \rho(t,\by)\dby.
\end{split} \right. 
\end{equation}
The dynamics   is subject to prescribed initial conditions, $(\rho_0,\bu_0)$, with two main configurations: either compactly supported density $diam\, \{supp \, \rho_0\} \leq D_0$ in $\R^n$, or  over the torus $\T^n$.
System \eqref{e:main} corresponds to  the large-crowd description of discrete crowd, consisting of $N \gg 1$ agents (of birds, insects, fish, robots, etc.) which align their microscopic velocities, $\{\bv_i(t)\}_{i=1}^N \in \R^n$,
\begin{equation}\label{eq:discrete}
\dot{\bv}_i= 
\sum_{j\in {\mathcal N}(\bx_i)} \phi\big(\bx_i(t),\bx_j(t)\big)(\bv_j(t)-\bv_i(t)),  \qquad \dot{\bx}_i=\bv_i
\end{equation}
Different models distinguish themselves with different choices of communication kernels, $\phi (\cdot, \cdot) \geq 0$, which dictate the neighborhoods ${\mathcal N}(\bx):=\{\by\, | \, \phi(\bx,\by)>0\}$. The most  notable examples found in the literature, \cite{Kur1975,Aok1982,Rey1987,VCBCS1995,Ben2005,CS2007a,CS2007b,MT2011}, employ  radial  kernels depending on the \emph{metric distance}
\begin{equation}\label{eq:geo}
\phi(\bx,\by)= \pshi(|\bx-\by|), 
 \end{equation}
that is, communication is taking place in balls, ${\mathcal N}(\bx)=B_{R_0}(\bx)$, where $R_0$ is the diameter of $supp \, \pshi$,
\begin{equation}\label{eq:geometric}
\left\{
\begin{split}
\rho_t + \n_\bx \cdot (\rho \bu) & = 0, \\
\bu_t + \bu \cdot \n_\bx \bu &= \int_{B_{R_0}(\bx)}\hspace*{-0.5cm}\pshi(|\bx-\by|)(\bu(t,\by) - \bu(t,\bx))  \rho(t,\by)\dby.
\end{split} \right. 
\end{equation} 

The communication kernels are in general  unknown: their approximate shape is either derived empirically \cite{StarFlag1,Bal2008,StarFlag2,StarFlag3,Co2011,CCGPS2012}, or learned from the data \cite{BPK2016,LZTM2019}, or  postulated based on phenomenological arguments, \cite{VZ2012,Bia2012,Bia2014}. 
Since the precise form of the communication kernel is in general not known, it is therefore imperative to understand how general $\pshi$'s  affect the large-time, large-crowd dynamics.
It is here that we make a distinction between  \emph{long-range} and \emph{short-range} interactions.

\medskip\noindent
{\bf Long-range interactions}. Here,  the support of $\pshi$ is large enough, $R_0 \gg1$,  so that every part of the crowd  is in direct communication with every other part. In particular, if $\pshi$ satisfies 
\begin{equation}\label{eq:long_range}
\mbox{a `fat tail' condition}: \qquad \int^\infty \!\!\pshi(r)\dr = \infty,
\end{equation}
 then $supp \, \rho(t,\cdot)$ remains within a finite diameter $D_\infty <\infty$, and consequently, the alignment dynamics \eqref{eq:geometric} enforces the the crowd  to `aggregate' around a limiting velocity, $\bu_\infty\in \R^n$. The flocking behavior in this case of long-range interactions is captured by the statement  ``smooth solutions must flock'', \cite{TT2014, HeT2017}, namely --- if $(\rho(t,\cdot), \bu(t,\cdot))  \in L^\infty \times W^{1,\infty}$ is  a  global strong solution of  \eqref{eq:geometric},\eqref{eq:long_range} subject to compactly supported initial data $(\rho_0,\bu_0)$, then, there exists $\eta>0$ (depending  on  $D_\infty$) such that $\bu(t,\cdot)$ flocks towards   a limiting velocity $\bu_\infty$,
\begin{equation}\label{eq:exp_decay}
 \max_\bx |\bu(t,\bx)-\bu_\infty| \lesssim e^{-\eta t} \rightarrow 0,  \qquad \bu_\infty=\frac{\cP_0}{\cM_0}, \quad  (\cM_0,\cP_0):= \int (1,\bu_0) \rho_0(\bx) \dbx.
\end{equation}
The unconditional flocking asserted  in \eqref{eq:exp_decay} is  rooted in the corresponding statement for the discrete dynamics   \eqref{eq:discrete}, with long-range interactions \eqref{eq:geo},\eqref{eq:long_range}, 
\cite{CS2007a,CS2007b,HT2008,HL2009, HHK2010,MT2014}. 

\smallskip
The conditional statement for long range interactions shifts the burden of proving their flocking behavior to the regularity theory. Here we make a further distinction between bounded and singular $\pshi$'s.

For \emph{bounded kernels}, global regularity in dimension $n=1,2$ holds if   the initial configuration satisfies a certain  threshold conditions, \cite{TT2014,CCTT2016,HeT2017}. Global regularity (and hence flocking behavior) of \eqref{eq:geometric} for any dimension  but for small data in higher order Sobolev spaces, $\|\bu\|_{H^{s+1}} < \e_0(\|\rho_0\|_{H^s})$ was proved in  \cite{HKK2014}.
The regularity and flocking behavior of \eqref{eq:geometric} with  \emph{singular kernels} $\pshi(r) = r^{-\b}$  was 
 studied in \cite{PS2017} for weakly singular kernels,  $0<\b<n$, and in \cite{ST3,ST1,ST2,DKRT2018} for strongly singular kernels, $\b =n+\a$, $0<\a<2$.  In the latter case, the system \eqref{eq:geometric} is endowed with a fractional parabolic diffusion structure which  enabled to prove, at least in the one-dimensional case, \emph{unconditional flocking behavior}, independent of any initial threshold.  We quote here our main result of \cite{ST3,ST2} which  will  be echoed in the statements of this present paper: for the system  \eqref{eq:geometric} with strongly singular kernel, $\pshi(r)=r^{-(n+\a)}, 0<\a<2$, on $\T$, any non-vacuous initial data gives rise to  a unique global solution,  $(\rho,u)\in L^\infty([0,\infty);  H^{s +\a} \times H^{s+1}), \ s\geq 3$, which converges  to a flocking traveling wave,
\[
	\|u(t,\cdot)-u_\infty\|_{H^s} +\|\rho(t,\cdot) - \rho_\infty(\cdot-tu_\infty) \|_{H^{s-1}} \lesssim  e^{-\eta t}, \quad t>0, \qquad 
	u_\infty:= \frac{P_0}{\cM_0}.
\]
The question of regularity  (and hence flocking) for strongly singular kernels $\pshi(r)=r^{-(n+\alpha)}$ in  dimensions $n>1$ is  open, with the  exceptions of  recent small initial data results in \cite{Sh2019} for
H\"older spaces, $|\bu_0-\bu_\infty|_\infty \lesssim (1+\|\rho_0\|_{W^{3,\infty}}+\|\bu_0\|_{W^{3,\infty}})^{-n}$ with $2/3<\alpha<3/2$, and in \cite{DMPW2019} for small Besov data $\|\bu_0\|_{B_{n,1}^{2-\alpha}} + \|\rho_0-1\|_{B_{n,1}^1} \leq \e$ with $\alpha \in (1,2)$.

\medskip\noindent
{\bf Short range interactions}. The class of singular kernels $\pshi(r)=r^{-\beta}$ offers a communication framework  which emphasizes  short-range  interactions  over long-range interactions, yet their global support still reflects global communication. In particular,  strongly singular kernels, $n< \beta< n+2$, demonstrates  hydrodynamic flocking  for thinner tails than those sought in \eqref{eq:long_range}, yet their infinite support still maintain   global direct communication over all $supp\, \rho(t,\cdot)$.\newline
This brings us back to the original question alluded to at the beginning, namely --- understanding self-organization  driven by   a \emph{purely local communication protocol}.  This is the question we address in our present work, in the  context of general alignment \eqref{e:main} with short-range singular communication kernels\footnote{Here and throughout $\one_S$ denote the characteristic function of a set $S$, and  $A\lesssim B$ means $A/B <C$ where $C$ is a fixed constant.}
\begin{equation}\label{eq:cutoff}
 \frac{\one_{|\bx-\by|<R_0}}{|\bx-\by|^{n+\a}}\lesssim \phi(\bx,\by) \lesssim \frac{\one_{|\bx-\by|<2R_0}}{|\bx-\by|^{n+\a}}, \quad 0<\a <2. 
\end{equation}
It provides a first fundamental step in our understanding of emergent phenomena in collective dynamics driven by short-range communication kernels.\newline 
It has been an open question whether the emergence of hydrodynamic flocking  survives the cut-off localization in \eqref{eq:cutoff}. The situation is analogous to the scenario  of discrete crowd  with  short range communication, \eqref{eq:discrete}, which may fail to flock due to finite-time loss of graph connectivity associated with the time-dependent adjacency matrix $\{\phi(\bx_i(t),\bx_j(t))\}$, \cite[sec. 2.2]{MT2014}. At the level of  hydrodynamic description \eqref{e:main}, lack of connectivity manifests itself as `thinning' of crowd density inside $supp \, \rho(t,\cdot)$, and eventually creating  vacuous sub-regions in which the flow does not exert any alignment on its neighborhood. In this case, the dynamics \eqref{e:main} is reduced  to inviscid Burgers-type  blowup \cite{Ta2017}, thereby demonstrating necessity of the no-vacuum assumption. 
This brings us to our first main result, asserting that  smooth non-vacuous solutions of alignment dynamics associated with a general class of \emph{short-range} singular  kernels, \eqref{eq:cutoff}, must flock.
\begin{theorem}[{\bf Smooth solutions must flock --- singular symmetric kernels}]\label{thm:local_must_flock}
\mbox{ }\newline
Let $(\rho(t,\cdot), \bu(t,\cdot))$  be a  global strong solution
of the  alignment dynamics \eqref{e:main} with short-range symmetric kernel \eqref{eq:cutoff}, over the torus $\T^n$.
Assume that
\begin{equation}\label{eq:away_from_vacuum}
\eta(t):=\int^t \rho^2_-(s) \ds \stackrel{t \rightarrow \infty}{\longrightarrow} \infty, \qquad \rho_-(t):=\min_\bx\rho(t,\bx).
\end{equation}
Then  there is convergence towards flocking $($with the average velocity $\displaystyle \bu_\infty=\frac{\cP_0}{\cM_0})$
 \begin{equation}
\int_{\T^n}|\bu(t,\bx)-\bu_\infty|^2\rho(t,\bx)\dbx \leq \frac{1}{2\cM_0}e^{-\eta(t)}.
 \end{equation}
\end{theorem}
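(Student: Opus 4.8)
The plan is to run the energy/entropy method for alignment systems, adapted to the short-range singular kernel. Because $\phi$ is symmetric, the continuity equation conserves the total mass $\cM_0$, and integrating $\rho\times(\text{momentum equation})$ and symmetrizing in $\bx\leftrightarrow\by$ shows that the total momentum $\cP_0=\int_{\T^n}\rho\bu\,\dbx$ is conserved as well; hence $\bu_\infty=\cP_0/\cM_0$ is well defined and time-independent, and by the Galilean invariance of \eqref{e:main} one may assume $\bu_\infty=0$. The object to track is the energy fluctuation
\[
\mathcal{E}(t):=\int_{\T^n}|\bu(t,\bx)-\bu_\infty|^2\rho(t,\bx)\,\dbx .
\]

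Next I would derive the dissipation identity. Differentiating $\mathcal{E}$ in time, rewriting the $\rho_t$-term via the continuity equation so that all transport contributions cancel, and then inserting the momentum equation gives
\[
\ddt\mathcal{E}(t)=2\int_{\T^n}\!\int_{\T^n}\phi(\bx,\by)\big(\bu(\bx)-\bu_\infty\big)\cdot\big(\bu(\by)-\bu(\bx)\big)\rho(\bx)\rho(\by)\,\dbx\,\dby ,
\]
and symmetrizing the integrand under $\bx\leftrightarrow\by$ (which is legitimate precisely because $\phi$ is symmetric) collapses this to
\[
\ddt\mathcal{E}(t)=-\,\mathcal{D}(t),\qquad
\mathcal{D}(t):=\int_{\T^n}\!\int_{\T^n}\phi(\bx,\by)\,|\bu(t,\bx)-\bu(t,\by)|^2\,\rho(t,\bx)\rho(t,\by)\,\dbx\,\dby .
\]

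The crux is a coercivity (Poincar\'e-type) bound of the form $\mathcal{D}(t)\gtrsim \rho_-^2(t)\,\mathcal{E}(t)$, up to a constant depending only on $n,\alpha,R_0,\T^n$ and $\cM_0$. Two ingredients feed it. First, the elementary Jensen inequality $|\bu(\bx)-\bu_\infty|^2\le \cM_0^{-1}\int_{\T^n}\rho(\by)|\bu(\bx)-\bu(\by)|^2\,\dby$ (which uses only $\int\rho(\bu-\bu_\infty)=0$, and in particular requires \emph{no} upper bound on $\rho$) gives $\mathcal{E}(t)\le \cM_0^{-1}\,J(t)$ with $J(t):=\int_{\T^n}\!\int_{\T^n}\rho(\bx)\rho(\by)|\bu(\bx)-\bu(\by)|^2\,\dbx\,\dby$. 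Second, I would bound $J$ below by $\mathcal{D}$ modulo $\rho_-^2$: using the lower bound in \eqref{eq:cutoff} and $\rho(t,\cdot)\ge\rho_-(t)$ one has $\mathcal{D}(t)\gtrsim \rho_-^2(t)\,R_0^{-(n+\alpha)}\int\!\!\int_{|\bx-\by|<R_0}|\bu(\bx)-\bu(\by)|^2\,\dbx\,\dby$, and since the kernel vanishes outside the $R_0$-ball, I would tile $\T^n$ by finitely many cubes of side comparable to $R_0$ and telescope $\bu(\bx)-\bu(\by)$ through the (density-weighted) cube averages of $\bu$ along a chain of neighbouring cubes joining $\bx$ to $\by$, estimating each link $|\langle\bu\rangle_{Q}-\langle\bu\rangle_{Q'}|^2$ by the near-diagonal part of $\mathcal{D}$ over $Q\times Q'$; this is a fractional Poincar\'e inequality on the torus, and it yields $J(t)\lesssim \rho_-^{-2}(t)\,\mathcal{D}(t)$. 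Combining the two ingredients gives $\mathcal{D}(t)\ge c\,\rho_-^2(t)\,\mathcal{E}(t)$, and Gr\"onwall's inequality applied to $\ddt\mathcal{E}\le -c\rho_-^2\mathcal{E}$ yields the stated exponential bound with $\eta(t)=\int^t\rho_-^2(s)\,\ds$.

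The only genuinely delicate point is this last connectivity step: unlike in the long-range setting, the cut-off in $\phi$ forces a real chaining argument across the torus, and one must keep the density weights along the chain under control so that the coercivity constant degrades only through the factor $\rho_-^2(t)$ --- this is exactly the role of the non-vacuum hypothesis \eqref{eq:away_from_vacuum}, whose failure is what permits the Burgers-type blow-up recalled above. Everything else --- the energy identity, the Jensen step, and the final Gr\"onwall --- is routine.
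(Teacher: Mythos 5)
Your proposal is correct, and it reaches the dissipation inequality $\ddt\mathcal{E}\le -c\,\rho_-^2\,\mathcal{E}$ by a genuinely different route at the one step that matters. The energy identity, conservation of $\cM_0,\cP_0$, and the Jensen/Cauchy--Schwarz reduction $\mathcal{E}\le \cM_0^{-1}J$ coincide with the paper's bookkeeping (its $V_2$-functional). Where you diverge is the coercivity of the enstrophy: the paper casts it as a time-dependent spectral-gap problem for the weighted alignment operator in $L^2_\rho$, peels off the factor $\rho_-^2$ from the Rayleigh quotient, and then proves that the remaining infimum of the \emph{local} homogeneous $H^{\a/2}$-seminorm over the time-dependent constraint set $[\rho(t)]^\perp$ stays bounded away from zero by a compactness/contradiction argument (a vanishing local seminorm forces the $L^2$-limit to be a constant field, while the weak$^*$ limit of $\rho(t_k)$ retains mass $\cM_0$, contradicting orthogonality); note that this step, as written in the paper, also invokes a uniform upper bound on the density. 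You instead prove the same coercivity directly by a chaining (local-to-global fractional Poincar\'e) argument with density-weighted cube averages, using $\meas_t(Q)\ge\rho_-(t)|Q|$ on cubes of side comparable to $R_0$, so that connectivity of $\T^n$ enters through a finite chain whose length depends only on $n$, $R_0$ and the torus size. What your route buys is an explicit, quantitative constant (depending only on $n,\a,R_0,|\T^n|,\cM_0$ and degrading exactly through $\rho_-^2$) and independence from any upper bound on $\rho$; what the paper's route buys is brevity and a formulation (second eigenvalue $\ev_2(t)$, Fiedler-number analogy) that parallels the discrete graph-connectivity picture, at the price of a non-quantitative compactness step. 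Both then conclude by Gr\"onwall; like the paper, your argument yields the stated exponential decay with $\eta(t)=\int^t\rho_-^2(s)\,\ds$ up to the (harmless) normalization of constants appearing in the theorem's display.
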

Note that any positive lower bound on the density is impossible in the open space if finite mass is assumed. So,  periodic conditions are more natural for the settings. Compactness is also important for the proof which is presented in section \ref{sec:must_flock} below. \thm{thm:local_must_flock}  provides a general framework for the flocking of alignment dynamics driven by  short-range  singular communication kernels, under the assumption that the global solution is non-vacuous. Here, the precise decay rate of the density $\min \rho(t,\cdot)$ is at the heart of matter:  according to theorem \ref{thm:local_must_flock} unconditional flocking is  achieved under the  lower bound  
\begin{equation}\label{eq:sqrt}
\rho(t,\cdot) \gtrsim \frac{1}{\sqrt{1+t}}.
\end{equation} 
The difficulty is that  verification  of such \emph{apriori} lower bound seems out of reach. To address this difficulty, we now introduce a new {\em topological} short-range  communication protocol which tames the required decay rate of the density by adapting itself to  sub-regions with thinner densities. Moreover,  the new protocol fits to be more realistic in various behavioral experiments than the purely metric one as we will elaborate in the next section.

\subsection{A new paradigm for collective dynamics -- topological kernels}
We  introduce a new communication protocol based on the  principle that 
\emph{information between agents spreads faster  in regions of lower density}.
 To realize this principle we consider  communication kernel of the form
\begin{subequations}\label{eqs:dist}
\begin{equation}\label{eq:topo}
\phi(\bx,\by)=\pshi(|\bx-\by|)\times \frac{1}{\dd^{n}_\rho(\bx,\by)},
\end{equation}
which depends on two main features:

(i)   {\bf Metric distances}. $\pshi(r)$ reflects the dependence on metric distance in $\R^n$ (and respectively in $\T^n$), $r(\bx,\by)=|\bx-\by|$.  For the metric part of the communication, we use the  short-range singular kernel
\begin{equation}\label{eq:psi1}
\pshi(r)=\frac{h(r)}{r^{\a}}, \qquad  \one_{r<R_0} \lesssim h(r) \lesssim  \one_{r<2R_0}, \quad 0<\a<2.
\end{equation}
The smooth cut-off $h(r)$  guarantees  that communication is localized in  balls of radius $\leq 2R_0$.  

(ii) {\bf Topological distances}. For any two  parts of the crowd at two different locations $\bx,\by\in supp \, \rho(t,\cdot)$, we fix an  intermediate region of communication $\O(\bx,\by) \subset \R^n$ (or $\subset \T^n$). In the one-dimensional case, it is taken simply as the closed interval $\O(x,y)=[x,y]$; in the  multi-dimensional case, we choose a conical region outlined in section \ref{sec:region}. Then, $\dd_\rho(\bx,\by)$ reflects the dependence on the "mass" as a \emph{topological measure of a distance} between the crowd at $\bx$ and $\by$ -- specifically,
\begin{equation}\label{eq:drho} 
\dd_\rho(\bx,\by) := \left[\int_{\O(\bx,\by)}  \rho(t,\bz) \dbz \right]^{\frac{1}{n}}\ \  \text{with} \ \O(\bx,\by) \ \text{given in} \ \eqref{eq:region}.
\end{equation}
\end{subequations}

\begin{remark}[{\bf Why topological distances?}]
To motivate  the so-called topological distances \eqref{eq:drho} we refer to the underlying discrete setup \eqref{eq:discrete}. The discrete configuration of $N$ agents is captured by the empirical distribution $\mu_t(\bx,\bv)=\frac{1}{N}\sum_k\delta_{\bx_k(t)}(\bx)\otimes \delta_{\bv_k(t)}(\bv)$. Then $\mu_t(\Omega(\bx_i,\bx_j))$ amounts to  counting  the (discrete) crowd in the region of communication $\Omega(\bx_i,\bx_j)$, and we set the discrete distance to be
\[
\dd_N(\bx_i,\bx_j):=\big(\mu_t(\Omega(\bx_i,\bx_j))\big)^{\frac{1}{n}}=\left(\frac{\#\{\bx_k \, | \, \bx_k\in \O(\bx_i,\bx_j)\}}{N}\right)^{\frac{1}{n}}.
\]
The  dependence of the communication kernel \eqref{eq:topo} on $\dd^{-n}_N(\bx_i,\cdot)$ indicates that agent at $\bx_i$ places a strong preference of communication with its nearest agents, $\{\bx_j \, | \, \dd_N(\bx_i,\bx_j) \sim 
N^{-\frac{1}{n}}\}$, over the increased interference in communication with agents farther away, 
$\{\bx_j \, | \, \dd_N(\bx_i,\bx_j) \lesssim 1\}$. The net effect of probing low density neighborhoods using such singular  kernels is  communication  dictated  by the number of nearest agents rather than geometric proximity, \cite{Ha2013,BD2016,BD2017}. 
Letting $N\rightarrow \infty$  recovers the topological distance \eqref{eq:drho} in the continuum setup, $\displaystyle \dd_N(\bx,\by) \stackrel{N\rightarrow \infty}{\longrightarrow}\dd_\rho(\bx,\by)$.
 Thus, the corresponding alignment dynamics \eqref{e:main},\eqref{eqs:dist}  is a  continuum realization  of the same paradigm, namely ---  enhancing communication in regions of low density by  invoking the `density of closest neighbors' as the proper  continuum substitute  for the `number of  closest neighbors'.
 Accordingly, we refer to  $\dd_\rho(\bx_i,\bx_j)$  as \emph{topological (quasi-)distance}. 
This is consistent with the  established terminology  in experimental literature, which refers to such topological communication in flocking birds   \cite{StarFlag1,Bal2008,StarFlag2,StarFlag3} and in human  interaction in pedestrian dynamics \cite{Wa2018}.
\end{remark}

Noting that $\dd_\rho(\bx,\by)\gtrsim c(\rho)|\bx-\by|$,  it follows that $\phi(\bx,\by)$ is  singular  of order $n+\a$,  $\phi(\bx,\by) \lesssim \one_{|\bx-\by|\leq 2R_0}|\bx-\by|^{-(n+\a)}$. Thus, the  topological kernel \eqref{eqs:dist}  belongs to the general class short-range kernels \eqref{eq:cutoff}. It reflects  short-range communication (of diameter $\leq 2R_0$), maintaining finite amplitude $\{\by \ | \ \phi(\bx,\by) \gtrsim 1\}$ within active topological neighborhoods
\[
{\mathcal N}(\bx)=  \{ \by\in B_{\mbox{}_{2R_0}}(\bx)\ | \  \dd_\rho(\bx,\by) < c_0 \},
 \]
where $c_0$ is an empirical constant indicating perception ability of the agents. The kernel is  non-convolutive, and though $\phi$ is symmetric $\phi(\bx,\by) = \phi(\by,\bx)$, the full kernel that appears in the alignment term, $K(\bx,\by,t):=\phi(\bx,\by)\rho(\by)$, is not. The proper notion of the non-symmetric (strongly) singular alignment action on the right of \eqref{e:main}, $\cC_\phi(\rho, f)= \int \phi(\bx,\by)(f(\by)-f(\bx))\rho(\by)\dby$,  is discussed in section \ref{s:topo}. This brings us to our second main result.
\begin{theorem}[{\bf Flocking of  short-range topological kernels}]\label{t:alignrate-a}  Let $(\rho,\bu)$ be a  global smooth solution of the topological model  \eqref{e:main}, \eqref{eqs:dist} on $\T^n$. Assume that  the density $\rho(t,\cdot)$ satisfies, 
	\begin{equation}\label{e:ralg1}
	\rho(t,\bx) \geq  \frac{c}{1+t}.
	\end{equation}
	Then the solution aligns with $\bu_\infty$ with at least a root-logarithmic rate 
	\begin{equation}\label{e:alignrate-a}
	|\bu(t) - \bu_\infty |_\infty  \lesssim \frac{c}{\sqrt{\ln t}}.
	\end{equation}
\end{theorem}

The proof of theorem \ref{t:alignrate-a} --- given in section \ref{sec:short-topo} below, traces the propagation of information between the extreme values of (the components of) $\bu(t,\cdot)$,
which are most susceptible to breakup since they can no longer rely on distant communication. Instead, we  introduce a new method of sliding averages, in which we measure how far $\bu(t,\bx)$ deviates from its average over the \emph{local} balls $B(\bx,r), \ r\leq R_0$, using a density-weighted Campanato class. For some algebraic sequence of times $t_n \to \infty$, these deviations are proved to be small. At the same time, we show that overwhelmingly, $\bu(t,\bx)$ stays close to its extreme values near the critical points where these values are attained. To achieve this, we estimate the conditional probability of an unlikely event of $\bu$ being far from its extremes, in terms of the mass-measure $\dmeas_t = \rho \dbx$: it is here that  the topological-based alignment  in  \eqref{eq:topo} plays a key role. We end up with a  (finite) overlapping chain of non-vacuous balls to connect any two points and by chain estimates, the fluctuations of $\bu(t,\cdot)$  are shown to decay uniformly in time. This  explains the emergence of global alignment from short-range interactions which, to the best of our knowledge, is the first result  of its kind.

\medskip\noindent
In closing this section, a couple of remarks are in order.
\begin{remark}({\bf A comparison with Motsch-Tadmor scaling}). It is instructive to compare the  topological kernel \eqref{eqs:dist} which we rewrite as
\[
\phi(\bx,\by)=\pshi(|\bx-\by|)\times\frac{1}{\meas_t (\O(\bx,\by))},
\qquad \meas_t(\O):=\int_\O \rho(t,\bz)\dbz,
\]
with  the Motsch-Tadmor scaling \cite{MT2011} with local $\pshi(r)=\one_{r<R_0}$,
\[
\phi(\bx,\by)=\pshi(|\bx-\by|)\times\frac{1}{\meas_t (B_{\mbox{}_{R_0}}(\bx))}. 
\]
In the former, the pairwise interaction between two ``agents'' depends on the  density in an intermediate  region of communication; in the latter, the communication of each ``agent''  depends on how thin is the crowd in its own metric neighborhood. 
\end{remark}

\subsection{Global regularity: drift-diffusion beyond symmetric kernels}
 As in the case of long-range communication, theorem \ref{t:alignrate-a} shifts the `burden' of proving flocking with short-range topological kernels to the question of existence: do \eqref{e:main},\eqref{eqs:dist} admit global smooth solutions with lower-bounded density $\rho(t,\cdot) \gtrsim (1+t)^{-1}$?  In section \ref{s:1D} which is at the heart of matter and occupies the bulk of this paper, we provide an affirmative answer for  the one-dimensional  model over $\T$, thus providing a first example of unconditional flocking. The question of non-vacuous global regularity in  dimension $n>1$ remains open.

To elaborate further on the required regularity of $(\rho,u)$, we note that both density and momentum equations in \eqref{e:main} fall under a general class of \emph{parabolic drift-diffusion} equations,
\[
u_t + {\mathbf b} \cdot \n_\bx u = \int K(\bx,\by,t) (u(\by) - u(\bx)) \dby  + f, 
\]
with (a priori) rough coefficients, ${\mathbf b}$, and with a proper singular local kernels
\[
\quad \frac{\one_{|\bx-\by|<R_0}}{|\bx-\by|^{1+\a}} \lesssim K(\bx,\by,t) \lesssim \frac{\one_{|\bx-\by|<2R_0}}{|\bx-\by|^{1+\a}}.
\]
Regularity theory for equations of this type  had a rapid development in recent years due to breakthroughs in understanding of the non-local structure of the fractional Laplacian, see Caffarelli et al \cite{CCV2011,CS2011}, Silverstre et al \cite{S2012,SS2016}, Mikulevicius and Pragarauskas \cite{MP2014}, and local jump processes in Chen et. al.  \cite{CKK2008} and the references therein. Any  of these regularity results requires, however, the symmetry of the kernel $K(\cdot,\cdot,t)$ which we lack in the present framework: thus, the velocity $\bu$  in our topological model \eqref{e:main} is governed by drift-diffusion associated with kernel $K(\bx,\by)=\phi(\bx,\by)\rho(\by)$: while $\phi(\cdot,\cdot)$ is symmetric, $K$ is not. Similarly, the same dynamics expressed in terms of the momentum, ${\mathbf m}:=\rho \bu$ or the density, consult  \eqref{e:m} and respectively \eqref{e:rparab}, encounters  the non-symmetric kernel $K(\bx,\by)=\phi(\bx,\by)\rho(\bx)$. 

Lack of symmetry in the $K$- kernels associated with  the topological communication \eqref{eqs:dist}  poses a fundamental difficulty  which prevents us from using the known results about the regularizing effect  in such transport-diffusion. Instead, we adapt the De Giorgi method to settle the H\"older regularity of $\rho(t,\cdot)$ in the critical case $\a=1$ (sec. \ref{s:degiorgi}), and employ fractional Schauder estimates to address the $\a>1$ case (sec. \ref{s:sch}). Together with the propagation of higher order regularity proved in sec. \ref{s:p}, we arrive at our third main regularity result stated below.

\begin{theorem}[{\bf Global regularity of  1D topological model}]\label{t:mainI}\mbox{ }\newline
 Consider the one-dimensional system \eqref{e:main} on $\T$ with short-range topological kernel \eqref{eqs:dist} and  singularity of order $1\leq \a<2$. Any non-vacuous initial data $(\rho_0,u_0)\in H^{s+\a}\times H^{s+1}$, $s\geq 3$, admits 
 a unique global in time solution,  $(\rho,u)$, in the class  
    \[
    \begin{split}
     \rho & \in C_w(\R^+; H^{s+\a})\cap L^2_\loc(\R^+; H^{s+1+\frac{\a}{2}}) \\
    u & \in C_w(\R^+; H^{s+1})\cap L^2_\loc(\R^+; H^{s+1+\frac{\a}{2}}), 
    \end{split}
    \]
    which flocks $|u(t,\cdot)-u_\infty|_\infty \rightarrow 0$.
\end{theorem}
Here, $C_w$ designates the space of weakly continuous function. Let us note that the density-enstrophy is expected to persist in a more natural, stronger  regularity space  $L^2_t H_x^{s+\a +\frac{\a}{2}}$ with $\a>1$, yet proving this would involve rather technical fractional energy estimates directly in $H^{s+\a}$, which we will postpone to future work. 
\begin{remark}
What distinguishes the 1D setup  is a conservation law, $e_t+(ue)_x=0$, of the first-order quantity $\displaystyle e=u_x+\int \phi(x,y)(\rho(y)-\rho(x))\dx$: while this is known for the metric kernels, $\phi=\pshi(|x-y|) $, \cite{CCTT2016,ST1,DKRT2018}, it is remarkable that  the same conservation  law still survives for the \emph{anisotropic} topological kernels $\pshi(|x-y|)\dd_\rho(x,y)$. In section \ref{s:e} we show that it enforces the parabolic character of the 1D mass equation $\rho_t+(u\rho)_x=0$ and in sec. \ref{s:density}, that it implies the  lower-bound $\rho(t,\cdot) \gtrsim (1+t)^{-1}$ sought in \eqref{e:ralg1}.
\end{remark}

\subsection{Notation} The following notation is used throughout the text:  $|f|_p$ stand for the classical $L^p$-norm, $1\leq p \leq \infty$, $\|f\|_X$ stands for all other norms such as $H^s$, etc, and $[f]_\g$, $0<\g<1$ stand for the H\"older semi-norm. The use of the following brackets is adopted:
\[
\lan f,g \ran = \int_{\T^n} f(\bx) g(\bx) \dbx,\quad 
\lan f,g \ran_\rho  =  \int_{\T^n} f(\bx) g(\bx) \rho(\bx) \dbx.
\]
We denote $\d_{\bz} f(\bx) = f(\bx+\bz ) - f(\bx)$. For  Sobolev spaces of fractional order, $H^{s}(\T^n)$, $0<s<1$, we always adopt the Gagliardo definition which states
\begin{equation}\label{e:GSdef}
	\|f\|_{H^s}^2 = \int_{\T^n} |\d_{\bz} f(\bx)|^2 \phi_s(\bz) \dbz,
\end{equation}
where 
\[
\phi_s(\bz) = \sum_{\bk \in \Z^n} \frac{1}{|\bz+2\pi \bk|^{n+2s}}.
\]
Considering $f$ periodically extended to $\R^n$ the above is the same as
\[
\|f\|_{H^s}^2 = \int_{\R^n} |\d_{\bz} f(\bx)|^2 \frac{\dbz}{|\bz|^{n+2s}}.
\]
We sometimes may use the latter for the benefit of a more explicity defined kernel.

\section{The new protocol: short-range topological diffusion}
In what follows we restrict ourselves to the periodic domain $\T^n$. This choice is motivated by the fact that the density in \eqref{e:main} quantifies parabolicity of the equation. With finite mass $\cM<\infty$ such parabolicity cannot be controlled uniformly on the open space.
In this section we elaborate on the basic ingredients  which are involved in the short-range, singular topological alignment model \eqref{e:main}, \eqref{eqs:dist},
\begin{subequations}\label{eqs:main_revisited}
\begin{equation}\label{eq:main_revisited}
\left\{
\begin{split}
\rho_t + \n \cdot (\rho \bu) & = 0, \\
\bu_t + \bu \cdot \n_\bx \bu &= \int_{\T^n}\phi(\bx,\by)(\bu(\by) - \bu(\bx))  \rho(\by)\dby, 
\end{split} \right. 
\end{equation}
where $\phi$ is the topological kernel given by
\begin{equation}\label{eq:topoker}
	\phi(\bx,\by) =\frac{h(|\bx-\by|)}{|\bx-\by|^{\a}}\times \frac{1}{\dd^n_\rho(\bx,\by)}, \quad \one_{r<R_0} \lesssim h(r) \lesssim  \one_{r<2R_0}, \quad 0<\a<2.
\end{equation}
\end{subequations}
Here, the first component of the kernel is quantified in terms of metric distance $|\bx - \by|$, the second involves the topological ``distance" $\dd_\rho(\bx,\by)$  between $\bx$ and $\by$,  defined by the mass located in the intermediate region of communication $\O(\bx,\by)$ 
\[
\dd_\rho(\bx,\by)=\left[\int_{\Omega(\bx,\by)}\rho(t,\bz)\dbz\right]^{\frac{1}{n}}.
\]
The region of communication enclosed between   $\bx$ and $\by$ is outlined in \ref{sec:region} below.  Observe that in absence of pressure each component $u$ of $\bu$ satisfies the maximum principle,
$\min u_0 \leq u(t,\cdot) \leq  \max u_0$, and that for all global regular solutions, $u\in L^1_{\loc} W^{1,\infty}$, the density remains non-vacuous, $\rho_0(x) > 0  \leadsto \rho(t,\bx) > 0 \text{ for all } t\geq 0$; hence we may assume that the density $\rho$ is a non-vacuous kinematic quantity satisfying
\begin{equation}\label{e:rbounds}
0 < c(t) \leq \rho(t,\bx) \leq C(t) <\infty, \quad \bx\in \T^n.
\end{equation}
Note that  although the distance function $\dd_\rho$  is not a proper metric (except for the one-dimensional case where it accumulates the mass along the interval $[x,y]$), it defines an equivalent topology on $\T^n$ such that $\dd_\rho(\bx,\by) \geq c |\bx-\by|$, and all the distances are bounded by the total mass $\cM$. 
 Moreover, since $\O(\bx,\by) = \O(\by,\bx)$, the topological distance is symmetric $\dd_\rho(\bx,\by) = \dd_\rho(\by,\bx)$.
\subsection{Region of  communication}\label{sec:region}
  The topological distance $\dd_\rho(\bx,\by)$ requires us to specify a domain of communication,  $\O(\bx,\by)$,
which is probed by agents located at $\bx$ and $\by$. In the one-dimensional case, it is simply the closed interval, $\O(x,y)=[x,y]$. In the multi-dimensional case, it is reasonably argued that the `intermediate environment' between agents could be an  $n$-dimensional region inside the ball enclosed by $\bx$ and $\by$, namely $B(\frac{\bx+\by}{2},r)$ with radius $r:=\frac{|\bx-\by|}{2}$.  For example, one  can simply set  $\O(\bx,\by)$ to be that ball. As we shall see below,  however, the   fine structure  of the local regions of communication, $\O(\bx_i,\bx_j)$, is important in order to retain unconditional flocking. To this end, we set a more restrictive \emph{conical} region $\O(\bx,\by)$, see Figure~\ref{f:omega}. First, we consider two basic locations $\bx =  (-1,0,...,0)$ and $\by= (1,0,...,0)$ and set  the region of revolution generated by a parabolic arch connecting $\bx$ and $\by$:
\[
\O_0 := \{\bz=(a,\bz_-) \ \big| \  |\bz_-| <1 - a^2, -1\leq a\leq 1\}.
\]
For an arbitrary pair of points $\bx,\by\in \R^n$, let $\O(\bx,\by)$ denote the region scaled and translated from $\O_0$:
\begin{equation}\label{eq:region}
\O(\bx,\by) := \{\bz  \ \big| \  |\bz-\bz_-| <1 -r^2a^2  \}, \qquad r=\frac{|\bx-\by|}{2},
\end{equation}
where $\bz_-: =\bz(a)$ is the projection of $\bz$ on the diameter
$\{\bz_-(a)= \frac{\bx+\by}{2}+\frac{a}{2}(\by-\bx), \ -1\leq a \leq 1\}$
connecting $\bx$ and $\by$.
 
\begin{figure}
	\includegraphics[width=5in]{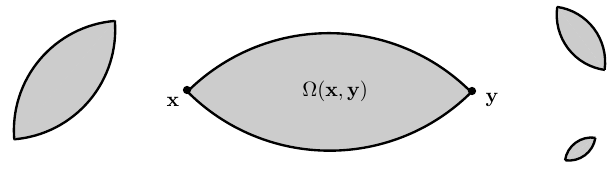}
	\caption{Communication domains between agents}\label{f:omega}
\end{figure}
Observe that at  the tips, $\O(\bx,\by)$ has the opening of $\frac{\pi}{2}$. For subsequent analysis,it   can be replaced by any angle $<{\pi}$, calibrated according to a particular application\footnote{Thus, for example,  \eqref{eq:region} can be enlarged to $\displaystyle \O(\bx,\by) := \{\bz  \ \big| \  |\bz-\bz_-|^{\g} <1 -r^2a^2  \}$ for any $0<\g<2$.}.  It is crucial, however, that the region of communication  is not locally smooth near the tips $\bx,\by$, see Claim~\ref{c:geom} below, which excludes the ball $B(\frac{\bx+\by}{2},r)$ with  conical opening of $90^\circ$.

\subsection{Topological kernels and the operators they define}\label{s:topo}
A distinctive feature of the alignment term on the right of \eqref{eq:main_revisited}  is that it admits a (formal) commutator structure \cite{ST1} 
\[
\int_{\T^n}\phi(\bx,\by)(\bu(\by) - \bu(\bx)) \rho(\by) \dby =\aL_\phi(\rho\bu)-\aL_\phi(\rho)\bu: = \cC_\phi(\bu,\rho),
\]
where  $\cL_\phi$ is the  integral operator given formally by
\begin{equation}\label{eq:aL}
\aL_\phi(f):=p.v.\int_{\T^n} \phi(\bx,\by)(f(\by)-f(\bx))\dby.
\end{equation}
Strong solutions to the system \eqref{e:main} satisfy energy equality
\begin{subequations}\label{eqs:eneq}
\begin{equation}\label{e:eneq}
\ddt \int_{\T^n} \rho |\bu|^2 \dbx = - \int_{\T^n} \phi(\bx,\bxp) |\bu(\bx) - \bu(\bxp)|^2 \rho(\bx)\rho(\bxp)\dbx\dbxp,
\end{equation}
which will be a key component in establishing alignment. We note on passing that  in view of the symmetry of the kernel $\phi$, we have conservation of mass and momentum:
\[
\cM(t) = \int_{\T^n} \rho(t,\bx) \dbx\equiv\cM_0, \quad \cP(t) = \int_{\T^n} \rho\bu(t,\bx)  \dbx\equiv\cP_0.
\]
Hence, the rate of decay of the energy of the left of \eqref{e:eneq} is the same rate of decay of the  fluctuations
\begin{equation}\label{eq:dfluc}
\ddt \int_{\T^{2n}} |\bu(t,\bx)-\bu(t,\by)|^2\rho(t,\bx)\rho(t,\by)\dbx\dby = 2 M_0 \ddt \int_{\T^n} \rho |\bu|^2 \dbx.
\end{equation}
\end{subequations}
Since  we have the Galilean invariance
$\bu \ra \bu(\bx+t{\mathbf U},t) - {\mathbf U}$ and $\rho \ra \rho(\bx+t{\mathbf U} , t)$ we may assume that $\cP(t)=\cP_0=0$.
 
We note that a proper care has to be given in order to properly define the singular integral operators $\cL_\phi f(\bx)$ and the corresponding commutator 
\begin{equation}\label{eq:comm}
\cC_\phi(f,g)  = \int_{\T^n} \phi(\bx,\by)(f(\by)-f(\bx))g(\by)\dby,
\end{equation} 
for strongly singular kernels $\a\geq 1$. Our immediate goal below is therefore to develop formal definitions and initial  facts about the operator $\cL_\phi$ in multi-D settings (more details specific for 1D situation will follow in Section~\ref{s:Leib}). Due to the non-convolutive and anisotropic nature of the kernel, most of the standard facts do not apply and will need to be readdressed.  Our plan is to define $\cL_\phi f$ as a distribution first. Then we state a formal justification of pointwise evaluations of $\cL_\phi f(\bx)$ and the commutator 
$\cC_\phi(f,g)$, so as to justify the fundamental  bookkeeping of energy/enstrophy fluctuations in \eqref{eqs:eneq}. Technicalities of the proofs will be collected in the Appendix.

\begin{definition}[{\bf The topologically-based fractional diffusion}]
	With the kernel given by \eqref{eq:topoker} we define an operator $\cL_\phi : H^{\a/2} \to H^{-\a/2}$ by the following action: for any $f\in H^{\a/2} $ and $g\in H^{\a/2}$
	\begin{equation}\label{e:Ldef}
	\lan    \cL_\phi f , g \ran = - \frac12 \int_{\T^{2n}} \phi(\bx,\by)(f(\bx) - f(\by))(g(\bx) - g(\by))  \dby \dbx.
	\end{equation}
\end{definition}
Note that formally such action could be obtained from \eqref{eq:aL}, if \eqref{eq:aL} made sense pointwise, by the usual symmetrization. Clearly, from the Gagliardo definition of $H^{\a/2}$, \eqref{e:GSdef}, we have
\[
|\lan    \cL_\phi f , g \ran| \lesssim \|f\|_{H^{\a/2}}  \| g \|_{H^{\a/2}}.
\]
Due to the symmetry of the kernel, the operator $\cL_\phi$ is clearly self-adjoint, and its range is in $H^{-\a/2}_0$ (here subscript $0$ means mean-free distributions). By the standard operator theory this implies the following statement.
\begin{lemma}\label{l:invert} The restricted operator $\cL_\phi : H_0^{\a/2} \to H_0^{-\a/2}$ is invertible.
\end{lemma}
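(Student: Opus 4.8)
\textbf{Proof plan for Lemma \ref{l:invert}.}

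The plan is to realize $\cL_\phi$ on the mean-zero spaces as a bounded, self-adjoint, coercive operator and then invoke Lax--Milgram (or the Riesz representation theorem). From the definition \eqref{e:Ldef}, the associated bilinear form
\[
B(f,g) := -\lan \cL_\phi f, g\ran = \frac12 \int_{\T^{2n}} \phi(\bx,\by)\big(f(\bx)-f(\by)\big)\big(g(\bx)-g(\by)\big)\dby\dbx
\]
is symmetric, and the Gagliardo--Slobodeckij bound already noted in the excerpt gives $|B(f,g)| \lesssim |f|_{H^{\a/2}}|g|_{H^{\a/2}}$, i.e. boundedness of $\cL_\phi : H^{\a/2}_0 \to H^{-\a/2}_0$. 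The range lands in $H^{-\a/2}_0$ because, as already observed, $\cL_\phi$ is self-adjoint and $\cL_\phi(\const)=0$ (constants give zero increments), so $\lan \cL_\phi f, 1\ran = \lan f, \cL_\phi 1\ran = 0$. Hence it suffices to establish coercivity on $H^{\a/2}_0$, i.e. $B(f,f) \gtrsim |f|_{H^{\a/2}}^2$ for mean-zero $f$.

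For coercivity I would use the lower bound built into the kernel. By \eqref{eq:cutoff}, or directly from $\dd_\rho(\bx,\by)\gtrsim c|\bx-\by|$ together with \eqref{e:rbounds} and \eqref{eq:psi1}, we have $\phi(\bx,\by)\gtrsim \one_{|\bx-\by|<R_0}|\bx-\by|^{-(n+\a)}$ with a constant depending only on $c, C, R_0, h$. Therefore
\[
B(f,f) \gtrsim \int_{\T^{2n}} \frac{\one_{|\bx-\by|<R_0}}{|\bx-\by|^{n+\a}}\,|f(\bx)-f(\by)|^2 \dby\dbx =: [f]_{\a/2}^2 .
\]
The remaining point is that this truncated Gagliardo seminorm controls the full $\dot H^{\a/2}(\T^n)$ seminorm: the long-range part $|\bx-\by|\ge R_0$ contributes at most $\lesssim |f|_{L^2}^2$ since $\int_{|\bx-\by|\ge R_0}|\bx-\by|^{-(n+\a)}\dby < \infty$ on the torus, so $|f|_{H^{\a/2}}^2 \lesssim [f]_{\a/2}^2 + |f|_{L^2}^2$; and then on mean-zero functions the Poincar\'e-type inequality $|f|_{L^2} \lesssim [f]_{\a/2}$ (equivalently $|f|_{L^2}\lesssim |f|_{\dot H^{\a/2}}$, valid on $\T^n$ for $f\in H^{\a/2}_0$ by Fourier series / spectral gap of $(-\Delta)^{\a/2}$) upgrades this to $|f|_{H^{\a/2}}^2 \lesssim [f]_{\a/2}^2 \lesssim B(f,f)$. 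Thus $B$ is a bounded, coercive, symmetric bilinear form on the Hilbert space $H^{\a/2}_0(\T^n)$, so by Lax--Milgram it induces an isomorphism $H^{\a/2}_0 \to (H^{\a/2}_0)^\ast = H^{-\a/2}_0$, which is precisely the invertibility of $\cL_\phi$.

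The main obstacle — and the only place where the anisotropic, non-convolutive nature of $\phi$ could bite — is the passage from the truncated seminorm $[f]_{\a/2}$ back to the genuine $H^{\a/2}$ norm, since $\phi$ is not a convolution kernel and one cannot simply diagonalize in Fourier space. But because the \emph{lower} bound on $\phi$ in \eqref{eq:cutoff} is itself a clean radial truncated Riesz kernel, this reduces to the standard (and kernel-independent) fact that the $R_0$-truncated Gagliardo seminorm is equivalent to $|\cdot|_{\dot H^{\a/2}}$ on the torus; no information about the fine structure of $\dd_\rho$ is needed for this lemma. One should also note that coercivity — hence the constants in the isomorphism — depends on the density bounds \eqref{e:rbounds}, so the lemma is to be read for a fixed non-vacuous $\rho$; this dependence is harmless here and is exactly what the later quantitative sections will need to track.
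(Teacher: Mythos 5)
Your route is essentially the paper's own: the published proof also rests on the coercivity $-\lan \cL_\phi f , f\ran \sim |f|^2_{H^{\a/2}}$ for mean-zero $f$ (which it simply declares ``clear'') and then concludes invertibility from the fact that a bounded, self-adjoint, coercive operator is injective with closed and dense range; your Lax--Milgram step is the same functional analysis in different packaging. The added value of your write-up is the attempt to actually prove the coercivity, and there the one soft spot is the Poincar\'e step. As written it is circular: you claim $|f|_{L^2}\lesssim [f]_{\a/2}$ is ``equivalently'' the spectral-gap bound $|f|_{L^2}\lesssim |f|_{\dot H^{\a/2}}$, but the truncated seminorm satisfies $[f]_{\a/2}\lesssim |f|_{\dot H^{\a/2}}$, so the spectral gap of $(-\Delta)^{\a/2}$ bounds $|f|_{L^2}$ by the \emph{larger} quantity and says nothing about $[f]_{\a/2}$; feeding it back through your inequality $|f|^2_{H^{\a/2}}\lesssim [f]^2_{\a/2}+|f|^2_{L^2}$ only returns $|f|^2_{L^2}\lesssim [f]^2_{\a/2}+C|f|^2_{L^2}$ with a constant $C$ that need not be small.

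The fact you need --- a nonlocal Poincar\'e inequality $|f|_{L^2}\lesssim [f]_{\a/2}$ for mean-zero $f$ on $\T^n$, equivalently that the $R_0$-truncated Gagliardo seminorm controls the full one modulo constants --- is true, but it requires its own short argument: either compactness and contradiction (if $|f_k|_2=1$, $\int f_k=0$, $[f_k]_{\a/2}\to 0$, then a subsequence converges in $L^2$ to a mean-zero limit which is locally, hence by connectedness of $\T^n$ globally, constant, a contradiction --- this is precisely the argument the paper runs to bound the spectral gap \eqref{e:lam3} in the proof of Theorem~\ref{thm:local_must_flock}), or a chaining argument telescoping increments over distances $\geq R_0$ through points at mutual distance $<R_0$. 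With that supplied, your proof is complete. One further small slip: for the lower bound $\phi(\bx,\by)\gtrsim \one_{|\bx-\by|<R_0}|\bx-\by|^{-(n+\a)}$ you need the \emph{upper} comparison $\dd_\rho(\bx,\by)\lesssim |\bx-\by|$, which follows from $\rho\leq C$ and $|\O(\bx,\by)|\lesssim |\bx-\by|^n$; the inequality $\dd_\rho(\bx,\by)\gtrsim c|\bx-\by|$ that you quote goes the other way and is what yields the upper bound on $\phi$.
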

\begin{proof}
Clearly, $c_0 \|f\|_{H_0^{\a/2}}^2\leq -\lan    \cL_\phi f , f \ran \leq C_0 \|f\|_{H_0^{\a/2}}^2$. Hence, $\|\cL_\phi f\|_{H^{-\a/2}} \geq c \|f\|_{H^{\a/2}}$ which shows that the operator has closed range and is injective. If the range is not all of $H^{-\a/2}_0$, then there is a $g \in H_0^{\a/2}$ for which $\lan    \cL_\phi f , g \ran  = 0$ for all $f\in H^{\a/2}$. Taking $f=g$ we arrive at a contradiction.  Thus, $\cL_\phi$ is invertible.
\end{proof}

In what follows we will need to be able to evaluate the action of the operator pointwise.  In the range $0<\a<1$ such evaluation presents no problem as long as $f\in C^1$. The rigorous argument goes by ``unwinding" the symmetric defining formula \eqref{e:Ldef}. To demonstrate it, let us denote by $L_\phi f(\bx)$ the integral on the right hand side of \eqref{eq:aL}. Clearly, $L_\phi f\in C(\T^n)$. Let us fix a point $\bx_0\in \T^n$. Let $g$ be the standard non-negative Friedrichs' mollifier supported on the ball of radius $1$. Denote $g_\e = \frac{1}{\e^n}g((\bx-\bx_0)/\e)$. It suffices to show that
\[
\lan    \cL_\phi f , g_\e \ran \ra L_\phi f(\bx_0).
\]
Since for $0<\a<1$, $L_\phi f(x)$ is a continuous function we can break up the integral without ambiguity:
\[
\begin{split}
\lan    \cL_\phi f , g_\e \ran &= - \frac12 \int_{\T^{2n}} (f(\bx) - f(\by))(g_\e(\bx) - g_\e(\by)) \phi(\bx,\by) \dby \dbx \\
&=  \int_{\T^{2n}} (f(\by) - f(\bx))g_\e(\bx) \phi(\bx,\by) \dby \dbx = \lan L_\phi f, g_\e \ran \ra L_\phi f(\bx_0).
\end{split}
\]

The higher case $1\leq \a<2$ is more subtle. 
Let us show that when $\rho$ and $f$ are smooth, the 
element $\cL_\phi f \in H^{-\a/2}$ gains regularity. Formally, this first step is necessary to even discuss pointwise values $\cL_\phi f(\bx)$. So, let us make the following observation:
\begin{equation}\label{e:derMD}
\n_\bx \dd_\rho(\bx+\bz,\bx) = \frac{1}{\dd^{n-1}_\rho(\bx+\bz,\bx)}\int_{\O(\bx+\bz,\bx)} \n \rho(\by) \dby = \int_{\p \O(\bx+\bz,\bx)} \vec{\nu} \rho(\by) \dby.
\end{equation}
Clearly, if $|\n \rho|_\infty <\infty$, then $|\n_\bx \dd_\rho(\bx+\bz,\bx) | \leq C |\n \rho|_\infty |\bz|$ with $C$ depending on a standing hypothesis on the density \eqref{e:rbounds}. Next, we rewrite the defining formula \eqref{e:Ldef} in terms of 
  the difference operator 
$\d_\bz f(\bx) := f(\bx+\bz) - f(\bx)$,
\[
\begin{split}
\lan    \cL_\phi f , g \ran &= -  \frac12 \int_{\T^{2n}} \d_\bz f(\bx)\d_\bz g(\bx) \phi(\bx,\bx+\bz)\dbx \dbz\\
& = -\frac12  \int_0^1 \int_{\T^{2n}} \d_\bz f(\bx) \n g(\bx+ \th \bz) \cdot \bz \ \phi(\bx,\bx+\bz) \dbx \dbz \dth.
\end{split}
\]
Integrating by parts and recalling \eqref{eqs:dist}, $\phi(\bx+\bz,\bx)=\dfrac{h(\bz)}{|\bz|^\a}\times \dd^{-1}_{\rho}(\bx+\bz,\bx)$, we obtain
\[
\begin{split}
\lan    \cL_\phi f , g \ran&=  \frac12  \int_0^1 \int_{\T^{2n}} \d_\bz \n f(\bx) \cdot \bz g(\bx+ \th \bz) \phi(\bx,\bx+\bz)  \dbx \dbz \dth \\
& +  \frac12  \int_0^1 \int_{\T^{2n}} \d_\bz f(\bx) g(\bx+ \th \bz) \d_\bz \rho(\bx) \frac{ \n \dd_\rho(\bx+\bz,\bx) \cdot \bz}{ |\bz|^\a \dd^{n+1}_\rho(\bx+\bz,\bx)} h(\bz) \dbx \dbz \dth.
\end{split}
\]
Note that the singularity of the kernels appearing inside both integrals is of order $n+\a-1$ now. With additional use of smoothness of other quantities we obtain
\[
| \lan    \cL_\phi f , g \ran| \lesssim ( \| f \|_{C^2} + \| f \|_{C^1} \| \rho \|_{C^1}) | g|_{\infty}.
\]
This is of course not an optimal bound, but it shows that the regularity of $\cL_\phi f$ improves. One can continue in similar fashion. Assuming $g = \p_x^k h$, for some $h\in L^\infty$, one obtains
\[
 | \lan    \cL_\phi f , \p_x^k h  \ran| \lesssim C(\|f\|_{C^{k+2}},\|\rho\|_{C^{k+1}}) |h|_{\infty}.
\]
Thus, $ \cL_\phi f \in (C^{-k})^* \ss C^{k-\e}$, for any $\e>0$. 

Lemmas~\ref{l:topoL} and \ref{l:topoC} stated in the Appendix make a formal justification for representation formulas  \eqref{eq:aL} and \eqref{eq:comm} which are to be understood in the principal value sense. They come with estimates that will be crucial in the proof of the global regularity in 1D, see Section~\ref{s:1D}. 

In what follows the density function $\rho$, of course depends on time, and so does the kernel. However, we will suppress the time variable for notational brevity.

\subsection{Leibnitz rules and coercivity}\label{s:Leib}
In this section we develop basic product rules and coercivity estimates for the operator $\cL_\phi$. We restrict ourselves to the one-dimensional case both for notational simplicity and for its use in the proof of regularity  asserted in \thm{t:mainI}. 

We start with basic product formulas for the derivative of $\aL_\phi f$ provided
$f$ and $\rho$ are smooth. 

First, let us observe that \eqref{e:derMD} in 1D case takes a simple form:
\begin{equation}\label{e:derd}
\p_x  \dd_\rho(x+z,x) = \d_z \rho(x) \sgn(z).
\end{equation}
Formally the Leibnitz rule reads
\begin{equation}\label{e:derL}
(\aL_\phi f)' = \aL_\phi (f') + \aL_{\phi'} f,
\end{equation}
where
\[
\aL_{\phi'} (f) = -\int_{\T}  \frac{h(z)}{|z|^\a \dd^2_\rho(x,x+z)}\d_z \rho(x) \sgn(z) \d_z f(x) \dz.
\]
The symmetric kernel $\phi'$ is of the same order $1+\a$. So, we can make sense of the integral in the same way as we did for $\aL_\phi$.   Rigorous justification of \eqref{e:derL} follows by proving \eqref{e:derL} in its weak formulation. So, for any $g\in C^\infty$, we have
\[
\begin{split}
\lan    (\cL_\phi f)' , g\ran & =  - \lan \cL_\phi f, g' \ran \\
& = \frac12 \int \d_z f(x) \d_z g'(x) \phi(\dd_\rho(x+z,x),z) \dx \dz \\
& = - \frac12 \int \d_z f'(x) \d_z g(x) \phi(\dd_\rho(x+z,x),z) \dx \dz \\
&- \frac12 \int \d_z f(x) \d_z g(x) \p_x \phi(\dd_\rho(x+z,x),z) \dx \dz\\
& = \lan    \cL_\phi( f') , g\ran +  \lan    \cL_{\phi'} f, g\ran .
\end{split}
\]
Continuing in the same fashion we obtain
\begin{equation}\label{e:LRn}
(\cL_\phi f)^{(n)} = \sum_{k=0}^n \frac{n!}{k!(n-k)!} \cL_{\phi^{(k)}} f^{(n-k)}.
\end{equation}

We can now discuss coercivity property of the operator $\cL_\phi$. In tune with the fact that $\cL_\phi$ puts $\a$-derivatives on $f$, it is natural to expect that $\cL_\phi f \in H^s$ if and only if $f \in H^{s+\a}$. For the topological kernels, however, this is a delicate result, details of which are presented in the following lemma. 
\begin{lemma}
	For any $s \geq 3$, and $1\leq \a <2$ one has the following bounds
	\begin{equation}\label{e:coers}
	\begin{split}
	\|\mathcal{L}_{\phi}  f \|_{{H}^s}^2 & \lesssim \|f\|^2_{{H}^{s+\alpha}} + \|f\|^N_{{H}^{s+\frac{\alpha}{2}}}+\|\rho\|^N_{{H}^{s+\frac{\alpha}{2}}} +1\\
	\|\mathcal{L}_{\phi}  f \|_{{H}^s}^2  & \gtrsim  \|f\|^2_{{H}^{s+\alpha}} -  \|f\|^N_{{H}^{s+\frac{\alpha}{2}}}-\|\rho\|^N_{{H}^{s+\frac{\alpha}{2}}} -1
	\end{split}
	\end{equation}
	where $N = N(n,\a,s)$, and $\lesssim$ denotes up to a constant depending on $(\min \rho)^{-1}$ and $\max \rho$. 
\end{lemma}
\begin{proof}
	According to \lem{l:kcomm} the commutator satisfies 
	\[
	| (\cL_\phi f)^{(s)} -  \cL_\phi (f^{(s)}) |_2^2  \lesssim \|f\|^N_{{H}^{s+\frac{\alpha}{2}}}+\|\rho\|^N_{{H}^{s+\frac{\alpha}{2}}} +1.
	\]
To deduce that we simply observe that all the dependancies on 	$|\rho'|_\infty,|f'|_\infty,\ldots , |f^{(k-1)}|_\infty, |\rho^{(k-1)}|_\infty$ translate into ${H}^{s+\frac{\alpha}{2}}$-norms by the Sobolev embedding. So, it remains to estimate the top term $|\cL_\phi (f^{(s)})|_2^2$. 

Let us denote for simplicity $f^{(s)} = g$. We ``freeze" the density in the topological distance as follows 
\[
\cL_\phi g(x) = \frac{1}{\rho(x)} \int_{\mathbb{T}} \frac{h(|z|)}{|z|^{1+\alpha}} \d_z g(x)\dz+ \int_{\mathbb{T}} \frac{h(|z|)}{|z|^{1+\alpha}}\left( \frac{1}{\frac{1}{|z|}\int_{[0,z]} \rho(x+\xi) \dxi} - \frac{1}{\rho(x)} \right) \d_z g(x)\dz = J_1 + J_2.
\]
The first integral represents the truncated fractional Laplacian. We clearly have
\[
|J_1|_2^2 \sim \|g\|^2_{{H}^{\alpha}}.
\]
As to $J_2$ we estimate
\[
\left| \frac{1}{\frac{1}{|z|}\int_{[0,z]} \rho(x+\xi) \dxi} - \frac{1}{\rho(x)} \right| \lesssim |z||\n \rho|_\infty,
\]
and with that
\begin{align*}
	|J_2(x)| \lesssim |\n \rho|_\infty \int_{\mathbb{T}} \frac{h(|z|)}{|z|^{\alpha}} |\d_z g(x)|\dz  &=  |\n \rho|_\infty \int_{\mathbb{T}} \frac{h(|z|)}{|z|^{\frac{\alpha+1}{2}}} |\d_z g(x)|\frac{\dz}{|z|^{\frac{\a-1}{2}}} \\
	& \lesssim  |\n \rho|_\infty \|g\|_{H^{\a/2}} \lesssim \|f\|^N_{{H}^{s+\frac{\alpha}{2}}}+\|\rho\|^N_{{H}^{s+\frac{\alpha}{2}}}.
\end{align*}
Putting together the obtained estimates proves the lemma. 
\end{proof}

\section{Smooth solutions must flock}\label{sec:must_flock}

The goal of this section will be to prove that any global, non-vacuous smooth solution to 
the topological model \eqref{e:main} aligns to its average velocity vector 
$\bu_\infty$ which can be determined from the conservation of momentum and mass: $\bu_\infty  = \cP_0 / \cM_0$.

\subsection{Flocking for local symmetric  kernels}
Let us first cast the question of flocking in the general settings \eqref{eq:cutoff} which includes both metric  \eqref{eq:geo} and topological kernels \eqref{eq:topo}, as well as other singular $\phi$'s localized along the diagonal. In other words, at this point we do not specify any fine structure of the kernel near the singularity. 
We recast the fundamental energy balance relation \eqref{eqs:eneq}, valid for \emph{any} singular symmetric kernel  via our definition \eqref{e:Ldef}:
\begin{equation}\label{eq:bulk}
\begin{split}
\ddt \int_{\T^{2n}} |\bu(t,\bx)-&\bu(t,\by)|^2\rho(t,\bx)\rho(t,\by)\dbx\dby
=-2M_0\int_{\T^{2n}} \langle \cC_\phi(\bu,\rho),\bu \rangle_\rho \dby \\
& =
-2M_0\int_{\T^{2n}} \phi(\bx,\by)|\bu(t,\bx)-\bu(t,\by)|^2\rho(t,\bx)\rho(t,\by)\dbx\dby. 
\end{split}
\end{equation}
The  main technical aspect of deriving a proper Gr\"onwall differential inequality from \eqref{eq:bulk} consist of obtaining lower-bounds of the \emph{enstrophy} on the right hand side of \eqref{eq:bulk} for  short-range $\phi$'s.

It is clear that a \emph{necessary condition} for flocking $|\bu(t,\cdot)-\bu_\infty| \rightarrow 0$ requires the density to be bounded away from vacuum, or else the flow may break apart into two or more separate `islands', traveling in their own velocity which is disconnected from the influence of others. Indeed, when $\rho(\cdot, t)$ vanishes on a compact set, the momentum equation \eqref{e:main} is reduced to the pressureless Burgers system $\bu_t+\bu\cdot\nabla_\bx\bu=0$ which in turn leads to a finite-time blow-up, see \cite{Ta2017}. Precisely how far from vacuum the density must be in order to fulfill an alignment dynamics for general local kernels $\phi$ is asserted in \eqref{eq:away_from_vacuum}. This brings us to the proof of our first main result.\newline
 \begin{proof}[{\bf Proof of theorem \ref{thm:local_must_flock}}]
We begin by setting up the general Hilbert structure for a variational formulation of the problem. 
Let us denote by $L^2_{\rho}$ the space of $L^2(\T^n)$-fields $\bu$ with scalar product given by 
\[
\lan \bu,\bv \ran_{\rho} = \int_{\T^n} \bu(\bx)\cdot \bv(\bx) \rho(t,\bx) \dbx.
\]
Note that the metric of the space $L^2_{\rho}$ changes in time.
Next, we consider the family of eigenvalue problems parametrized by time:
we seek eigenpairs, $\ev(t)$ and  $\bu(t,\cdot)$, 
\begin{equation}\label{e:second}
\int_{\T^n} \phi(\bx,\bxp)(\bu(\bxp)-\bu(\bx))\rho(t,\bxp)\dbxp= \ev(t)\bu(\bx), \qquad \bu \in {\mathcal U}^\alpha_\rho:=L^2_{\rho}\cap H^{\a/2}.
\end{equation}
Note that the left hand side is precisely the action of the commutator $\cC_\phi(\bu,\rho)$ which -- for any fixed smooth $\rho$ and any symmetric kernel satisfying \eqref{eq:cutoff}, maps $H^{\a/2}$ into $H^{-\a/2}$. Moreover, the symmetric definition of $\aL_\phi$ \eqref{e:Ldef} yields that $-\cC_\phi(\bu,\rho)$ is non-negative, $-(\cC_\phi(\bu,\rho),\bu)\geq0$. Hence $\ev_1=0$ is the minimal eigenevalue corresponding to the constant solution $\bu\equiv {\mathbf 1}$, and this allows us to seek the \emph{second} minimal eigenvalue as a solution to the variational problem\footnote{By symmetry $\overline{\bu}=\bu_\infty:=\cP_0/\cM_0$ but we keep the separate notation of $\overline{\bu}$ to signify orthogonality to the $0$-eigen-space spanned by ${\mathbf 1}$.}
\begin{equation}\label{e:lam1}
\ev_2(t) = \inf_{\bu\in {\mathcal U}^\a_\rho} \frac{-\lan \cC_\phi(\bu-\overline{\bu},\rho),\bu-\overline{\bu}\ran_\rho}{|\bu-\overline{\bu}|^2_{L^2_\rho}}, \quad \overline{\bu}:=\frac{\int \bu \rho}{\int \rho} \ \ \mbox {so that} \ \ \langle\bu-\overline{\bu},{\mathbf 1}\rangle_\rho=0 
\end{equation}
or --- stated explicitly in terms of $\displaystyle |\bu-\overline{\bu}|^2_{L^2_\rho}=\frac{1}{2M_0}\int_{\T^{2n}}|\bu(\by)-\bu(\bx)|^2\rho(\bx)\rho(\by)\dbx\dby$,
\begin{equation}\label{e:lam1a}
\ev_2(t) = 2M_0 \times \inf_{\bu\in {\mathcal U}^\a_\rho}\frac{\displaystyle\int_{\T^{2n}}  \phi(\bx,\bxp)|\bu(\bxp)-\bu(\bx)|^2\rho(t,\bxp)\rho(t,\bx)\dbx \dbxp}{\displaystyle\int_{\T^{2n}}|\bu(\bx)-\bu(\by)|^2\rho(t,\bx)\rho(t,\by)\dbx\dby}.
\end{equation}
Since the numerator with $\phi(\bx,\by)\simeq |\bx-\by|^{-(n+\a)}\one_{r<R_0}(|\bx-\by|)$ is equivalent for the $H^{\a/2}$-norm, the existence follows classically by compactness. This links the enstrophy on the right of \eqref{eq:bulk}  to $\ev_2(t)$, in complete analogy to the discrete case in which the coercivity of the discrete enstrophy is dictated by the  Fiedler number, consult \cite[sec 2.2]{MT2014}.

We can now state an alignment estimate in terms of the shrinking $L^2_\rho$-diameter of the velocity, given by 
\begin{equation}\label{e:Vdiam}
V_2[\bu,\rho](t):=\int_{\T^{2n}} |\bu(t,\bx)-\bu(t,\bxp)|^2\rho(t,\bx)\rho(t,\bxp)\dbx\dbxp.
\end{equation}
By \eqref{eq:bulk}, \eqref{e:lam1a} we have
\begin{equation}\label{e:L2V}
\ddt V_2[\bu,\rho](t) \leq - \ev_2(t) V_2[\bu,\rho](t).
\end{equation}

 The implication of \eqref{e:L2V} is of course the bound
\begin{equation}\label{e:Vexp}
2\cM_0\int_{\T^n}|\bu(t,\bx)-\bu_\infty|^2\rho(t,\bx)\dbx =  V_2[\bu,\rho](t)  \leq V_2[\bu_0,\rho_0] \exp\left\{ - \int_0^t \ev_2(s) \ds \right\}. 
 \end{equation}
Consequently, the solution aligns in the $L^2_\rho$-distance sense if $\displaystyle \int_0^\infty \ev_2(s) \ds = \infty$. It is here that we use the assumed lower-bound  on the density, $\rho(t,\cdot)\gtrsim \rho_-(t)$, the assumed singularity of our kernel
  $\phi(\bx,\bxp) \gtrsim |\bx-\bxp|^{-(n+\a)}\one_{|\bx-\bxp|<R_0}$ and by the uniform upper-bound of the density, $|\bu-\overline{\bu}|_{L^2_\rho} \lesssim |\bu|_{L^2}$, in order to bound the spectral gap 
\begin{equation}\label{e:lam3}
\ev_2(t) \geq c\rho^2_-(t) \inf_{\bu\in {\mathcal U}^\a_\rho}  \frac{\displaystyle\int_{|\bx-\bxp|<R_0} \frac{|\bu(\bx)-\bu(\bxp)|^2}{|\bx-\bxp|^{n+\a}}\dbx\dbxp}{|\bu|^2_2}, \qquad c:=\frac{2M_0}{C^2}.
\end{equation}
Technically, the infimum still depends on time since it is taken over the orthogonal complement of the line spanned by $\rho(t)$, denoted $[\rho(t)]$, in the classical  $L^2(\T^n)$. We now have to show that this infimum still stays bounded away from zero. Geometrically this is due to the fact that the space $[\rho(t)]^\perp$ does not come close to the span of constants $\R^n$ in the sense of Hausdorff  distance. It is more straightforward to argue by contradiction, however. 

 Suppose there is a sequence of times $t_k\in \R^+$, and $\bu_k \in  L^2_{\rho(t_k)}\cap H^{\a/2}$ such that $|\bu_k|_2 = 1$  yet the homogeneous local $H^{\a/2}$-norm tends to zero:
\begin{equation}\label{e:Havanish}
 \int_{|\bx-\bxp|<R_0} \frac{|\bu_k(\bx)-\bu_k(\bxp)|^2}{|\bx-\bxp|^{n+\a}}\dbx\dbxp \to 0.
\end{equation}
 Note that the latter, in particular, implies compactness of the sequence $\{\bu_k\}_k$ in $L^2$. Hence, up to a subsequence, $\bu_k \to \bu_*$ strongly in $L^2$ and weakly in $H^{\a/2}$. By the weak lower-semicontinuity and \eqref{e:Havanish}, we conclude that $\bu_* \in \R^n$ is a constant field, with $|\bu_*|=1$ due to $|\bu_k|_2 \to |\bu_*|_2$.
 
 At the same time, since $\rho(t)>0$ and $\int \rho(t_k,\bx) \dbx = \cM_0$, there exists a 
 weak$^*$ limit of a further subsequence $\rho(t_k) \to \mu$, where $\mu$ is a positive Radon measure on $\T^n$ with non-trivial total mass $\mu(\T^n) = \cM_0$ (since $M_0=\lan \rho(t_k) , 1 \ran \to \lan \mu, 1 \ran$).  We now reach a contradiction if we prove the limit
 \[
0= \int_{\T^n} \bu_k (\bx) \rho(t_k,\bx) \dbx \to \int_{\T^n} \bu_* \dmu = \cM_0\bu_* .
\]
To prove the claimed limit note that the assumed uniform upper-bound of the density implies
 \[
\begin{split}
  \int_{\T^n} \bu_k (\bx) &\rho(t_k,\bx) \dbx - \int_{\T^n} \bu_* \dmu(\bx) \\
  & = \int_{\T^n} \bu_k (\bx) \rho(t_k,\bx) \dbx - \cM_0\bu_* 
=   \int_{\T^n} (\bu_k (\bx) - \bu_*) \rho(t_k,\bx) \dbx,
\end{split}
\] 
and the latter is clearly bounded by $C |\bu_k - \bu_*|_2 \to 0$.
We conclude that 
\[
\int \ev_2(s)\ds \geq c \eta(t) =c\int^t \rho^2_-(s) \ds\rightarrow \infty,
\]
and the result follows from \eqref{e:Vexp}.
\end{proof}

\subsection{Flocking with short-range topological kernels}\label{sec:short-topo}

We now turn our attention to the topological communication kernel \eqref{eqs:dist} and prove our main result, which improves the general \thm{thm:local_must_flock} to include a more natural condition on the density.

\begin{proof}[{\bf Proof of theorem \ref{t:alignrate-a}}]
Let us fix a coordinate $i$ and aim to prove \eqref{e:alignrate-a} for $u_i$. We denote $u = u_i$ for notational simplicity. Using the Galilean invariance we can lift $u$ if necessary and assume  that $u(t) >0$. 
Note that the extrema of $u(t)$, denoted $u_+(t)$ and $u_-(t)$, are monotonically decreasing and increasing, respectively.

We will make frequent use of the mass measure denoted
\[
\dmeas_t =  \rho(t,\bz) \dbz.
\]
\medskip
\noindent
\steps{1}{flattening near  extremes}  Let $\bx_+(t)$ be a point of maximum for $u(t,\cdot)$ 
and $\bx_-(t)$ a point of minimum. Let us fix a time-dependent $\d(t) >0$ to be specified later, and consider the sets
\[
G^+_\d(t) = \{ u<  u_+(t)(1-\d(t))\}, \quad G^-_\d(t) = \{ u> u_-(t)(1+\d(t))\}.
\]
The effect of flattening is expressed in terms of conditional expectations of the above sets in the balls $B(\bx_\pm(t), R_0)$ with respect to the mass measure. Let us denote
 \[
\E_t[A|B] = \frac{\meas_t(A\cap B)}{ \meas_t(B)}.
\]
We show that
\begin{equation}\label{e:G}
\int_0^\infty \d(t) \E_t[G^\pm_\d(t) | B(\bx_\pm(t), R_0)]  \dt <\infty.
\end{equation}
We focus on the '$+$' case, as the '$-$' case is entirely similar. To this end, let us compute the equation pointwise 
at the critical point $(t,\bx_+(t))$ utilizing the Rademacher Theorem: $(\p_t u) (t,\bx_+(t))= \p_t u_+(t)$ a.e.,
    \[
    \p_t u_+(t) =  \int \phi(\bx_+(t),\by) (u(\by) - u_+(t)) \rho(\by)\dby.
    \]
    At point $(\bx_+(t),t)$ we estimate on the alignment term with the use of the following observation:
    \begin{equation}\label{e:dn2f}
    c_0\frac{\one_{r<R_0}(|\bx - \by|)}{\dd^{n}_\rho(\bx,\by)} \leq  \phi(\bx,\by),
    \end{equation}
    for some $c_0>0$. Thus, we have
    \[
    \begin{split}
     - \p_t u_+(t) &= \int  \phi(\bx_+,\by)  (u_+(t) - u(\by)) \rho(\by)\dby\\
      & \geq c_0  \int_{B(\bx_+,R_0)} \frac{1}{\dd^{n}_\rho(\bx_+,\by)} (u_+(t) - u(\by)) \rho(\by)\dby, \\
        & \geq  \frac{c_0}{\meas_t( B(\bx_+(t), R_0)) } \int_{G^+_\d(t) \cap B(\bx_+(t), R_0)}  \hspace*{-1.5cm}(u_+(t) - u(\by)) \rho(\by)\dby \qquad (\mbox{since} \ \O(\bx_+,\by) \ss B(\bx_+,R_0))\\
    & \geq   \frac{c_0 \d(t)  u_+(t) }{\meas_t( B(\bx_+(t), R_0)) }  \int_{G^+_\d(t) \cap B(\bx_+(t), R_0)}  \hspace*{-0.5cm}\rho(\by)\dby \\
    &= c_0 \d(t)  u_+(t)  \E_t[G^+_\d(t) | B(\bx_+(t), R_0)].
    \end{split}
    \]
The result follows by integration:
\[
c_0 \int_0^\infty \d(t) \E_t[G^+_\d(t) | B(\bx_+(t), R_0)]   \dt  \leq  \ln \frac{u_+(0)}{\lim_{t\to \infty} u_+(t)} \leq \ln \frac{u_+(0)}{u_-(0)}.
\]

\medskip
\noindent
\steps{2}{Campanato estimates}  On this next step we obtain proper Campanato estimates that measure deviation of $u$ from its average values in terms of global enstrophy. 

We denote the averages with respect to mass-measure by
\[
u_{\bx,r} =  \frac{1}{\meas_t(B(\bx,r))} \int_{B(\bx,r)} u(t,\bz) \dmeas_t(\bz).
\]
Fix $\bx_*\in \T^n$. By H\"older inequality, we have the following estimate:
\[
\int_{|\bx-\bx_*|<\frac{r}{10}} | u(\bx) - u_{\bx_*,r}|^2 \rho(\bx) \dbx \leq \int_{\substack{|\bx-\bx_*|<\frac{r}{10}\\|\bxp-\bx_*|<r}  }\frac{1}{\meas_t(B(\bx_*,r))}  | u(\bx) - u(\bxp)|^2 \rho(\bx) \rho(\bxp)\dbxp\dbx
\]

At this point we recall that the communication domain $\O(\bx,\bxp)$ in \eqref{eq:region} has corner tips of opening $\frac{\pi}{2}$ degrees. Hence, we can make the following geometric observation.
\begin{claim} \label{c:geom} If $|\bx-\bx_*|<\frac{1}{10}r$ and $|\bxp-\bx_*|<r$, then $\O(\bx,\bxp) \ss B(\bx_*,r)$.
\end{claim}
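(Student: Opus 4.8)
The plan is to turn the inclusion into an elementary one–variable inequality by slicing $\O(\bx,\bxp)$ along its axis. By translation invariance we may place $\bx_*$ at the origin, so the hypotheses become $|\bx|<\tfrac{r}{10}$ and $|\bxp|<r$; if $\bx=\bxp$ the region is degenerate and the claim is trivial, so assume $L:=|\bx-\bxp|>0$. From the construction in \eqref{eq:region} (see Figure~\ref{f:omega}) each $\bz\in\O(\bx,\bxp)$ has a unique orthogonal projection $\bz_-$ onto the axis through $\bx$ and $\bxp$, lying \emph{between} these points; writing $\sigma\in[0,1]$ for the affine parameter of $\bz_-$ we have $\bz_-=(1-\sigma)\bx+\sigma\bxp$ and $\bz=\bz_-+\bw$ with $\bw\perp(\bxp-\bx)$. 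The decisive geometric input, which I would isolate as a preliminary lemma, is that because the corner tips at $\bx,\bxp$ open at $\tfrac{\pi}{2}$ the region $\O(\bx,\bxp)$ lies inside the right–angle double cone with apices $\bx$ and $\bxp$; hence its transversal radius is bounded by the axial distance to the nearer apex,
\[
|\bw|\ \le\ \min\{\sigma,\,1-\sigma\}\,L .
\]
(This is exactly where the ball $B\!\big(\tfrac{\bx+\bxp}{2},\tfrac{L}{2}\big)$, being smooth --- i.e.\ tangent to the axis at $\bx,\bxp$ --- is excluded.)

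Granting this, the estimate is two short steps. \emph{Step 1} --- the axis already lies inside $B(\bx_*,r)$: by convexity of $|\cdot|^2$,
\[
|\bz_-|^2=\big|(1-\sigma)\bx+\sigma\bxp\big|^2\le(1-\sigma)|\bx|^2+\sigma|\bxp|^2<\tfrac{1-\sigma}{100}\,r^2+\sigma\, r^2 .
\]
\emph{Step 2} --- adding the transversal bulge: since $(1-\sigma)\bx+\sigma\bxp=\bx+\sigma(\bxp-\bx)$ and $\bw\perp(\bxp-\bx)$, the cross term reduces to $\lan\bz_-,\bw\ran=\lan\bx,\bw\ran$, so Cauchy--Schwarz gives
\[
|\bz|^2=|\bz_-|^2+2\lan\bx,\bw\ran+|\bw|^2\le|\bz_-|^2+\tfrac{r}{5}\,|\bw|+|\bw|^2 .
\]
Inserting $|\bw|\le\min\{\sigma,1-\sigma\}\,L$ and $L\le|\bx|+|\bxp|<\tfrac{11}{10}r$ collapses everything to
\[
|\bz|^2<r^2\Big[\tfrac{1-\sigma}{100}+\sigma+\tfrac{11}{50}\min\{\sigma,1-\sigma\}+\tfrac{121}{100}\min\{\sigma,1-\sigma\}^2\Big],
\]
and a direct check (split at $\sigma=\tfrac12$) shows the bracket is $<1$ for $\sigma\in[0,1)$, while at $\sigma=1$ one has $\bz=\bxp$ and $|\bxp|<r$ outright. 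Hence $|\bz|<r$, i.e.\ $\bz\in B(\bx_*,r)$.

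The main obstacle is the last display: the constants there are tight, since at the waist of the lens ($\sigma\approx\tfrac12$) the footpoint $\bz_-$ may sit $\approx\tfrac{11}{20}r$ from $\bx_*$ \emph{while} the transversal bulge is simultaneously largest. The inequality closes only because (i) the opening is $\tfrac{\pi}{2}$ and not wider --- a wider cone (e.g.\ the naive parabolic region of opening $2\arctan 2$) would roughly double the coefficient $\tfrac{121}{100}$ and destroy the estimate, which is precisely why $\O$ must be conical rather than a ball --- and (ii) the hypothesis confines $\bx$ to the small ball $|\bx-\bx_*|<\tfrac{r}{10}$; replacing $\tfrac{1}{10}$ by a fixed fraction of $r$ that is not small enough would again violate the bracket bound. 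Thus both the non-smoothness of $\O$ at its tips and the constant $\tfrac{1}{10}$ enter essentially; I would also verify the preliminary lemma --- that $\O(\bx,\bxp)$ is contained in the right-angle double cone --- carefully against the precise formula in \eqref{eq:region}, since that is the one place where the exact shape matters.
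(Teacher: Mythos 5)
The paper itself gives no argument for this claim beyond the figure, so the comparison is really about whether your proof stands on its own; its overall skeleton (slice $\O(\bx,\bxp)$ along the axis, use $\bw\perp(\bxp-\bx)$ to reduce the cross term to $2\lan\bx,\bw\ran\le \tfrac r5|\bw|$, and check a one-variable inequality) is sound, and your final bracket computation is correct \emph{given} your preliminary lemma. The gap is precisely that lemma: the containment of $\O(\bx,\bxp)$ in the right-angle double cone with apices $\bx,\bxp$, i.e. $|\bw|\le\min\{\sigma,1-\sigma\}L$, is not proved and is in fact false for the region actually defined via $\O_0$ in \eqref{eq:region}. The profile there is the parabola $1-t^2$, so after scaling the transversal radius at affine parameter $\sigma$ is $\tfrac L2\bigl(1-(2\sigma-1)^2\bigr)=2L\sigma(1-\sigma)$; the tangent cone at the tips has half-angle $\arctan 2>\tfrac\pi4$, and e.g. at $\sigma=\tfrac14$ the radius is $\tfrac{3L}{8}>\tfrac L4$. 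You took the paper's prose ``opening of $\tfrac\pi2$'' at face value, but it is inconsistent with the displayed formula, and you flagged this as the one point to check without checking it; worse, your parenthetical claim that the parabolic region ``would roughly double the coefficient $\tfrac{121}{100}$ and destroy the estimate'' is wrong, so as written the proposal asserts that the claim fails for the region the paper actually defines.

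The statement is nevertheless true for the parabolic region, and your computation can be repaired with two changes. Use the exact profile $|\bw|<2L\sigma(1-\sigma)$ (not a cone bound: the extra factor $2$ is only active near the tips, where $\sigma(1-\sigma)$ is small, while at the waist $2\sigma(1-\sigma)=\tfrac12$ agrees with your bound), and replace the Jensen step by the sharper $|\bz_-|\le(1-\sigma)|\bx|+\sigma|\bxp|<\tfrac{1+9\sigma}{10}\,r$ \emph{before} squaring — with your lossier $|\bz_-|^2\le\tfrac{1-\sigma}{100}r^2+\sigma r^2$ the corrected bracket exceeds $1$ near $\sigma\approx0.7$. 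One then gets $|\bz|^2<r^2\bigl[\bigl(\tfrac{1+9\sigma}{10}\bigr)^2+\tfrac{11}{25}\sigma(1-\sigma)+\tfrac{121}{25}\sigma^2(1-\sigma)^2\bigr]$, and writing $u=1-\sigma$ the bracket equals $1-1.36u+0.37u^2+4.84u^2(1-u)^2<1$ for $u\in(0,1]$ since $0.37u+4.84u(1-u)^2<0.9<1.36$; the endpoint $\sigma\to1$ is harmless because the strict hypotheses $|\bx-\bx_*|<\tfrac r{10}$, $|\bxp-\bx_*|<r$ make the displayed bound strict. So: fix the transversal-radius lemma to match \eqref{eq:region}, drop the claim that the parabolic shape breaks the argument, and sharpen Step 1; then your route gives a complete proof of the claim, which the paper itself leaves to the reader.
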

In other words if $\bxp$ is in a ball and $\bx$ is close enough to the center of that ball then the domain $\O(\bx,\bxp)$ is entirely enclosed in the ball also, see Figure~\ref{f:geom}. This implies that $\meas_t(B(\bx_*,r)) \geq \meas_t(\O(\bx,\bxp)) = \dd^{n}_\rho(\bx,\bxp)$. We thus can further estimate, with the use of \eqref{e:dn2f},
\[
\begin{split}
\int_{|\bx-\bx_*|<\frac{r}{10}} | u(\bx) - u_{\bx_*,r}|^2 \rho(\bx) \dbx
& \leq \int_{|\bx-\bxp|<\frac{11}{10}r}   \frac{1}{\dd^{n}_\rho(\bx,\bxp)}  | u(\bx) - u(\bxp)|^2 \rho(\bx) \rho(\bxp)\dbxp\dbx\\
& \leq  \int_{\T^2}  \phi(\bx,\bxp) | u(\bx) - u(\bxp)|^2 \rho(\bx) \rho(\bxp)\dbxp\dbx.
\end{split}
\]

\begin{figure}
	\includegraphics[width=3.0in]{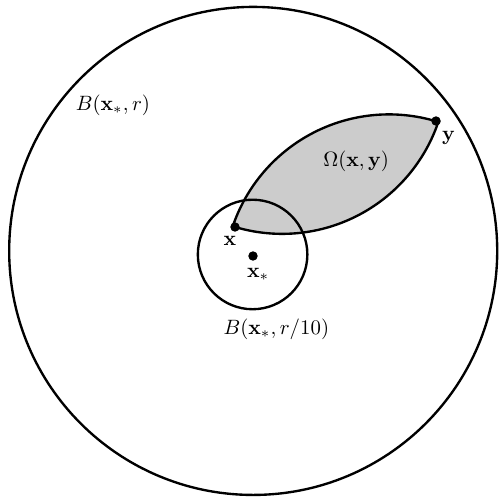} 
	\caption{$\Omega(\bx,\bxp)$ is trapped in the outer ball if $\bx$ is close to the center.}
	\label{f:geom}
\end{figure}

The energy balance \eqref{eq:bulk} (see also \eqref{eqs:eneq})  yields the space-time  bound
 on the  (components of) enstrophy on the right 
\[
\int_0^\infty \int_{\T^{2n}} \phi(\bx,\bxp) |u(\bx) - u(\bxp)|^2 \rho(\bx)\rho(\bxp)\dbx\dbxp \leq \frac12 \int_{\T^n} \rho_0 |\bu_0|^2 \dbx < \infty,
\]
hence we conclude with a time bound on the Campanato semi-norm,
\begin{equation}\label{eq:key}
\int_0^\infty [u]_{\rho}^2 \dt < \infty, \qquad [u]_{\rho}^2 := \sup_{\bx_*\in \T^n, r<\frac{R_0}{2}}\int_{|\bx-\bx_*|<\frac{r}{10}} | u(\bx) - u_{\bx_*,r}|^2 \rho(\bx) \dbx.
\end{equation}
Combined with \eqref{e:G} we have obtained
\[
I = \int_0^\infty  \Big(  \d(t) \E_t[G^\pm_\d(t) | B(\bx_\pm(t), R_0)]  + [u(t)]_{\rho}^2  \Big) \dt  < \infty.
\]
Clearly, for $A = e^{2I}$ we have
\[
\int_T^{T^A} \frac{\dt }{t\ln t}  = 2I \ \text{ for all } \ T>0.
\]
Hence, for any $T>1$ we can find a $t\in [T, T^A]$ such that 
\begin{equation}\label{e:Ct}
\begin{split}
 & [u(t)]_{\rho}^2  < \frac{1}{t \ln t} \\
 & \E_t[G^+_\d(t) | B(\bx_+(t), R_0)]  + \E_t[G^-_\d(t) | B(\bx_-(t), R_0)]   < \frac{1}{ \d(t) t \ln t}
\end{split}
\end{equation}
In view of the assumed lower bound on the density this implies in particular that
\begin{equation}\label{e:camp}
\sup_{\bx_*,\, r<\frac{R_0}{2}}\int_{|\bx-\bx_*|<\frac{r}{10}} | u(\bx) - u_{\bx_*,r}|^2 \dbx \leq \frac{1}{\ln t}.
\end{equation}

\medskip
\noindent
\steps{3}{sliding averages} Let us assume that $t\in [T, T^A]$ is a time fixed above.  We will now reconnect the two averages $u_{\bx_+,r}$ and $u_{\bx_-,r}$ sliding along the line connecting $\bx_+$ and $\bx_-$, and show that the variation of those averages is small.

\begin{figure}
	\includegraphics[width=4in]{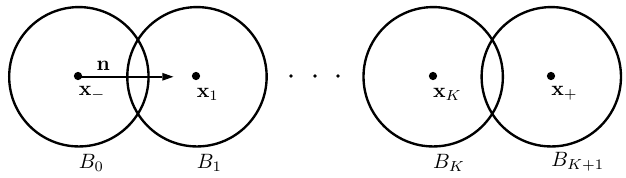} 
	\caption{}\label{f:balls2}
\end{figure}

 Denote the direction vector ${\mathbf n} = \frac{\bx_+ - \bx_-}{|\bx_+ - \bx_-|}$ and define a sequence of \emph{overlapping} balls, $B_k = B(\bx_k, \frac{r}{10}), \ k=0,\ldots,K$, with centers given by
$\bx_k = \bx_- + \frac{19r}{100} k {\mathbf n}$,  starting at $\bx_-$ and ending, with $K = [\frac{|\bx_+ - \bx_-|}{19r/100}]$, at  $\bx_{K+1} = \bx_+$,
see Figure~\ref{f:balls2}. 

Chebychev inequality, followed by \eqref{e:camp} applied to the ball centered at $\bx_*=\bx_0$, yields 
\[
|\{ \bx \in B_0 \cap B_1:  |u(\bx) - u_{\bx_0,r}| > \eta \} | \leq \frac{1}{\eta^2} \int_{B_0} |u(\bx) - u_{\bx_0,r}|^2 \dbx \leq  \frac{1}{\eta^2 \ln t}.
\]
We now fix scale $r := R_0/4$. Noticing that  $|B_k \cap B_{k+1}| =c R^n_0$ for all $k\leq K$,  and some a dimensional $c>0$, 
we  set $\displaystyle \eta = \frac{2}{\sqrt{c_0 R_0^{n} \ln t}}$ so that
\[
|\{ \bx \in B_0 \cap B_1:  |u(\bx) - u_{\bx_0,r}| > \eta \} | \leq \frac14 |B_0 \cap B_1|.
\]
Applying the same argument  to  the variation  around the  averaged value $u_{\bx_1,r}$, centered  at $\bx_*=\bx_1$, we obtain
\[
|\{ \bx \in B_0 \cap B_1:  |u(\bx) - u_{\bx_1,r}| > \eta \} | \leq   \frac14 |B_0 \cap B_1|.
\]
Consequently  the complements of the two sets must have a point in common in $B_0\cap B_1$:
\[\{ \bx \in B_0 \cap B_1:  |u(\bx) - u_{\bx_0,r}| \leq  \eta \} \cap
\{ \bx \in B_0 \cap B_1:  |u(\bx) - u_{\bx_1,r}| \leq  \eta \} \neq \emptyset,
\]
which implies that
\[
|u_{\bx_0,r} - u_{\bx_1,r}| \leq 2 \eta.
\]
Continuing in the same manner we obtain the same bound for all consecutive averages:
\[
|u_{\bx_k,r} - u_{\bx_{k+1},r}| \leq 2 \eta.
\]
Hence,
\begin{equation}\label{e:2aves}
|u_{\bx_-,r} - u_{\bx_+,r}| \leq 2 (K+1) \eta \lesssim \frac{1}{\sqrt{\ln t}}.
\end{equation}
Note that $K \leq 400\pi/R_0$, so it is bounded by an absolute constant. 
Furthermore, in view of  \eqref{e:Ct}, we can estimate
\[
\begin{split}
u_{\bx_+,r} & \geq \frac{1}{\meas_t(B(\bx_+,r))} \int_{B(\bx_+,r) \bs G_\d^+} u_+(t) (1-\d(t)) \dmeas_t \\
&\geq u_+(t)(1-\d(t)) ( 1 -  \E_t[G^+_\d(t) | B(\bx_+(t), R_0)]  ) \geq u_+(t)(1-\d(t)) \left( 1 - \frac{1}{ \d(t) t \ln t} \right).
\end{split}
\]
Hence,
\[
u_+(t) - u_{\bx_+,r}(t) \lesssim \d(t) +\frac{1}{ \d(t) t \ln t} \lesssim  \frac{1}{ \sqrt{ t \ln t}}
\]
if we set  $ \d(t) = \frac{1}{ \sqrt{ t \ln t}}$.  A similar estimate holds for the bottom average.  In conjunction with \eqref{e:2aves} these imply
\[
|u_+(t) - u_-(t)| \lesssim \frac{1}{\sqrt{\ln t}}.
\]
To conclude the proof we note that by the maximum principle 
\[
|u_+(T^A) - u_-(T^A)| \lesssim \frac{1}{\sqrt{\ln t}} \sim   \frac{1}{\sqrt{\ln (T^A)}}.
\]
Since $T$ is arbitrary this finishes the proof.

\end{proof}

\section{Global well-posedness in 1D}  \label{s:1D}
In this section we develop a more complete theory of one-dimensional topological models, and provide the proof of \thm{t:mainI}. In 1D the system takes form
\begin{equation}\label{e:main1D}
\left\{
\begin{split}
\rho_t + (\rho u)_x & = 0, \\
u_t + u u_x &= \cC_\phi( u,\rho), \ \ \phi(x,y)=\frac{h(|x-y|)}{|x-y|^{\a}} \times \frac{1}{\dd_\rho(x,y)}
\end{split} \right. \qquad (t,x)\in \R_+ \times \T,
\end{equation}
where 
\[
\dd_\rho(x,y)=\left|\int_x^y \rho(t,z)\dz\right|.
\]
The distinct feature of the one dimensional models with convolution metric kernels $\phi(x-y)$ is an extra conservation law:
\begin{equation}\label{e:e}
e_t + (ue)_x = 0, \qquad e := u_x + \aL_\phi  \rho.
\end{equation}
The derivation of the conservative ``$e$''-equation is straightforward with  either smooth or singular \emph{radial} kernels,  \cite{CCTT2016,ST1}.
It plays a key role in the regularity and hence unconditional flocking  of the 1D alignment with metric-based communication, \cite{CCTT2016,ST1,ST3}. Its role as a measure of disorder of the limiting flock was explored in \cite{LS-entropy}. A priori, there is no reason for \eqref{e:e} to hold in our case:  the derivation of such law stumbles upon the difficulty that  the operator $\cL_\phi$ does \emph{not} commute with derivatives.  Nevertheless, \emph{it is remarkable that  the law \eqref{e:e} still survives for anisotropic topological kernels}. To make our analysis rigorous we need to develop calculus of the operator $\cL_\phi$ and collect several analytical facts before we can proceed. This will be done in Section~\ref{s:Leib}.

Once we justify \eqref{e:e}, we can proceed in section \ref{s:e} to the regularity of the 1D solution along the lines of \cite{ST1,ST2}. Since the topological kernels lack  translation invariance, we need to revisit the question of propagation of regularity, section \ref{s:p} and H\"{o}lder regularization of the density in sections \ref{s:sch} and \ref{s:degiorgi}.

The proof will be split into several stages. First, before we even embark into technicalities of the argument, we develop necessary tools to work with the operator $\cL_\phi$ itself. It will be done in the next section.  Second, we establish a priori estimates on the density that are necessary to sustain uniform parabolicity and conclude the alignment, see Section~\ref{s:density}. Third, we prove a propagation of regularity result, \prop{p:global}, which states that if one can propagate some modulus of continuity of the density, then one can propagate any higher order regularity for both $u$ and $\rho$.  Fourth, we show how to gain a H\"older modulus of continuity from several sources. In the case $1<\a<2$ we reduce the problem to a known Schauder estimate for fractional singular operators. For the case $\a=1$, we employ the DeGiorgi method along the lines of Caffarelli, Chan, and Vasseur work \cite{CCV2011} with significant upgrades related to the presence of a drift, source, and asymmetry of the kernel involved. We also treat the system as truly nonlinear, see also \cite{GIMV2017}, and  highlight scaling properties of the system which become very important, see \eqref{e:urresc}-\eqref{e:rescale}.   

Finally, the alignment claim  follows directly from \thm{t:alignrate-a}. Indeed, the lower bound on the density \eqref{e:ralg1}  requires the rate  which will be established for any regular solutions in \lem{l:rhobounds} below. 


First we note that the existence and uniqueness of local solutions of \eqref{e:main1D} can be deduced from the estimates we perform below when treated as a priori. We state the result here for our future reference.
\begin{theorem}\label{t:lwp}
 Let $1\leq \a <2$ and $s \geq 3$. 	For any initial data $u_0\in H^{s+1}(\T)$, $\rho_0 \in H^{s+\a}(\T)$,   with no vacuum $\rho_0(x) >0$ there exists a unique solution to the system \eqref{e:main1D} on  a time interval $[0,T)$ on which it will remain non-vacuous and belonging to the class
	\begin{equation}\label{e:class}
	\begin{split}
	u & \in C_w([0,T), H^{s+1}) \cap L^2([0,T), H^{s+1+\frac{\a}{2}}) \\
	\rho& \in C_w([0,T), H^{s+\a}) \cap L^2([0,T), H^{s+1+\frac{\a}{2}}).
	\end{split}
	\end{equation} 
\end{theorem}
Incidentally, local well-posendess in any dimension $n$ can be established too, we refer to \cite{RSlwp} for details.

\subsection{An additional conservation law}\label{s:e}
 The  conservative ``$e$''-equation \eqref{e:e} is a heart of matter for the 1D regularity theory, along the lines of \cite{ST1,ST2,ST3,DKRT2018}. We derive it with the use of  the product formula \eqref{e:derL}.
\begin{lemma}[{\bf The conservation law of} $e$] For any solution to the topological model in class \eqref{e:class} the following conservation law holds 
\[
e_t+(ue)_x=0, \qquad e=u_x+\aL_\phi \rho.
\]
\end{lemma}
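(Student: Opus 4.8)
The plan is to differentiate the velocity equation in space and rearrange it into conservative form. Starting from $u_t + uu_x = \aL_\phi(\rho u) - u\aL_\phi(\rho)$ (the commutator form of the alignment term), apply $\p_x$ to both sides. The left side gives $u_{xt} + (uu_x)_x = u_{xt} + u_x^2 + uu_{xx}$. On the right, I use the Leibnitz rule \eqref{e:derL} twice: $\p_x \aL_\phi(\rho u) = \aL_\phi((\rho u)_x) + \aL_{\phi'}(\rho u)$ and $\p_x\big(u\aL_\phi(\rho)\big) = u_x\aL_\phi(\rho) + u\aL_\phi(\rho_x) + u\aL_{\phi'}(\rho)$. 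The key algebraic point is that the two $\aL_{\phi'}$ terms must combine with the commutator: $\aL_{\phi'}(\rho u) - u\aL_{\phi'}(\rho) = \cC_{\phi'}(\rho,u)$, and likewise the $\aL_\phi$ pieces reorganize. Collecting everything, I expect to land on
\[
\p_t\big(u_x + \aL_\phi\rho\big) + u_x\big(u_x + \aL_\phi\rho\big) + u\,\p_x\big(u_x + \aL_\phi\rho\big) = (\text{terms that cancel}),
\]
at which point, recalling $\rho_t + (\rho u)_x = 0$, the definition $e = u_x + \aL_\phi\rho$ lets one write the left side as $e_t + (ue)_x$ once I verify the time derivative $\p_t(\aL_\phi\rho)$ is handled correctly.

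\textbf{Key steps in order.} First I would record the time derivative of the operator term: since $\phi$ itself depends on $t$ through $\rho$, $\p_t(\aL_\phi\rho) = \aL_\phi(\rho_t) + \aL_{\dot\phi}(\rho)$, where $\dot\phi$ carries the $\p_d\phi \cdot \p_t\dd_\rho$ contribution. Then substitute $\rho_t = -(\rho u)_x$. Second, I would assemble the spatial derivative of the RHS using \eqref{e:derL}, as sketched above. Third — and this is the crux — I would show that the "extra" terms, namely $\aL_{\phi'}(\rho u) - u\aL_{\phi'}(\rho) - u_x\aL_\phi\rho$ coming from spatial differentiation, exactly match (up to sign) the terms $\aL_{\dot\phi}(\rho) - \aL_\phi((\rho u)_x) + \dots$ coming from the time derivative of $\aL_\phi\rho$. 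This is a pointwise identity about the kernel: using \eqref{e:derd}, $\p_t\dd_\rho(x,y) = \p_t\!\int_x^y\rho = \int_x^y \rho_t = -[\rho u]_x^y = -\big(\rho(y)u(y)-\rho(x)u(x)\big)$, while $\phi'$ from \eqref{e:phiprime} involves $\p_d\phi\cdot(\rho(y)-\rho(x))\sgn(y-x)$. One writes $\rho(y)u(y) - \rho(x)u(x) = u(x)\d\rho + \rho(y)\d u$ (with $\d = \delta_{y-x}$) and checks the bookkeeping term by term.

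\textbf{Main obstacle.} The hard part is precisely the matching in the third step — reconciling the kernel's $t$-dependence (through $\p_t\dd_\rho$) against its $x$-dependence (through $\p_x\dd_\rho$ as in \eqref{e:derd}), since these enter with structurally different multipliers ($-(\rho(y)u(y)-\rho(x)u(x))$ versus $(\rho(y)-\rho(x))\sgn(y-x)$). The cancellation is not term-by-term obvious; it works because the transport structure $\rho_t + (\rho u)_x = 0$ ties the two together, and one should exploit the identity $\p_t\dd_\rho(x,y) = -u\,\p_x\dd_\rho(x,y) - (\text{a piece absorbed by } \aL_\phi(\rho u_x))$, essentially $\dd_\rho$ being transported. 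A clean route is to verify everything weakly: test against $g\in C^\infty$, unwind all operators to the symmetric double-integral form via \eqref{e:Ldef}, use the difference-operator identities from Section~\ref{s:topo}, and integrate by parts in $x$; then the cancellation becomes a symmetry statement under $\bz \mapsto -\bz$ combined with the product rule for $\d_\bz(\rho u) = (\d_\bz\rho)u + \rho(\cdot+\bz)\d_\bz u$. The regularity needed to justify the manipulations ($\rho, u$ smooth, so $\aL_\phi\rho \in H^{3+\a/2-\text{something}}$ by \lem{l:gain}) is already available under the theorem's hypotheses, so the justification is routine once the algebra closes.
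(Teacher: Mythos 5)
Your plan is essentially the paper's own proof: differentiate the velocity equation using the Leibnitz rule \eqref{e:derL}, and then observe that the anisotropy term $\aL_{\phi'}(u\rho)$ cancels against the kernel's time derivative because the mass equation gives $u(y)\rho(y)-u(x)\rho(x)=-\p_t \dd_\rho(x,y)\sgn(y-x)$, so that $\cL_\phi((u\rho)')+\aL_{\phi'}(u\rho)=-\p_t\cL_\phi(\rho)$ — exactly the matching you single out as the crux (your expansion of $u\,\p_x\aL_\phi\rho$ and the splitting $u(x)\d\rho+\rho(y)\d u$ are unnecessary detours, but harmless). So the proposal is correct and follows the same route as the paper.
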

\begin{proof}
Differentiating the velocity equation and using the product rule \eqref{e:derL} we obtain
\begin{equation}\label{e:uder}
u'_t + u'u' + u u'' = \cL_\phi ((u\rho)') - u' \cL_\phi (\rho) - u (\cL_\phi(\rho))' + \aL_{\phi'}(u\rho).
\end{equation}
The finite difference in the integral representation of the  last term is given by 
\[
u(y)\rho(y) - u(x) \rho(x) = \int_x^y (u\rho)'(\zeta)d\zeta = 
-\int_x^y \rho_t(\zeta) d\zeta= - \p_t\dd_\rho(x,y) \sgn(y-x).
\]
Recalling the formula for the distance $\dd_\rho(x,y) = \left|  \int_x^y  \rho(t,z) \dz \right|$, we obtain
\[
\int_x^y \rho_t(\zeta) d\zeta= \p_t\dd_\rho(x,y) \sgn(y-x).
\]
Thus,
\[
\aL_{\phi'}(u\rho) =  - \int  \p_t\dd_\rho(x,y) \sgn(y-x) \phi'(x,y)\dy.
\]
Noting the relationship
\[
\p_t \dd_\rho(x,y) \sgn(y-x) \phi'(x,y) = \p_t \phi(x,y) (\rho(y) - \rho(x)),
\]
we obtain 
$\displaystyle \aL_{\phi'}(u\rho) = - \int \p_t \phi(x,y) (\rho(y) - \rho(x)) \dy$.
Putting it together with the $\cL_\phi ((u\rho)')$ term we obtain
\[
\cL_\phi ((u\rho)') + \aL_{\phi'}(u\rho) = -\p_t \cL_{\phi}(\rho).
\]
Grouping together terms in \eqref{e:uder} we arrive at 
\[
(u' + \cL_{\phi}(\rho))_t + u' (u' + \cL_{\phi}(\rho)) + u (u' + \cL_{\phi}(\rho))' = 0,
\]
which is precisely the law \eqref{e:e}.
\end{proof}

Paired with the continuity equation we find that the ratio $q = {e}/{\rho}$ satisfies the transport equation
\[
 q_t + u q_x = 0.
\]
Starting from sufficiently smooth initial condition with $\rho_0$ away from vacuum we can assume that $|q(t)|_\infty =  | q_0 |_\infty <\infty$. This gives a priori  pointwise bound
\begin{equation}\label{erho0}
|e(t,x)| \lesssim \rho(t,x).
\end{equation}
The argument can be bootstrapped to higher order derivatives (see \cite[Sec. 2]{ST1}) as follows.  The next order quantity  $q_1= q_x / \rho$ is  again transported
\begin{equation}\label{e:Q}
(q_1)_t + u (q_1)_x = 0.
\end{equation}
Solving for $e'(t,\cdot)$ we obtain another a priori pointwise bound
\begin{equation}\label{erho1}
|e'(t,x)| \lesssim |\rho'(t,x)| + \rho(t,x).
\end{equation}
Continuing in the same manner, $q_2 = (q_1)_x / \rho$, etc,  we obtain
\begin{equation}\label{erhok}
|e^{(k)}(t,x)| \lesssim |\rho^{(k)}(t,x)| +\ldots + \rho(t,x), \qquad k=0,1,2 \ldots
\end{equation}

Using $e$ allows one to rewrite the continuity equation in parabolic form:
\begin{equation}\label{e:rparab}
\rho_t+u\rho_x+e\rho= \rho \aL_\phi(\rho)
\end{equation}
Similarly, one can write the equation for the momentum $m = \rho u$:
\begin{equation}\label{e:m}
m_t+u m_x+e m  = \rho \aL_\phi(m).
\end{equation}
With a priori bounds on the density established in the next section, we can view equations \eqref{e:rparab} -- \eqref{e:m}  as a fractional parabolic system with rough drift and bounded force, which opens a possibility for applying some of the  tools recently developed for such equations.
\subsection{Bounds on the density}\label{s:density}
Let us first make one trivial remark: if $e_0=0$, then the continuity equation becomes a pure drift-diffusion and hence by the maximum principle the density remains within the confines of its initial bounds:
\begin{equation}\label{e:rhomaxpr}
\rhomin_0 \leq \rho(t,x) \leq \rhomax_0.
\end{equation}
In general, however, the $e$-quantity introduces a Riccati term that needs to be controlled by the singularity of the kernel.  First, we establish a bound from below.

\begin{lemma}\label{l:rhobounds} Let $(\rho,u)$ be a smooth solution of the topological model  \eqref{e:main1D}, with $1\leq \a <2$, subject to initial density $\rho_0$ away from vacuum, $0<\rhomin_0 \leq \rho_0(x) \leq \rhomax_0 <\infty$. 
Then the density obeys the following  bounds  for all  in time:
\begin{equation}\label{e:rhouplow}
 \frac{c}{1+t}   \leq \rho(t,x) \leq \rhomax(\cM_0,|q_0|_\infty,\phi), \qquad  x\in\T, \ t \geq 0,
\end{equation}
\end{lemma}
\begin{proof}
	Let us recall that the continuity equation can be rewritten as
	\begin{equation}\label{e:r-eq}
	\rho_t + u \rho_x = - q \rho^2 + \rho \aL_\phi(\rho).
	\end{equation}
	Let $\rmin$ and $\xmin$ denote the minimum value of $\rho$ and a point where such value is achieved. 	Invoking \lem{l:topoL} to justify the pointwise evaluation we obtain
	\[
	\begin{split}
	\ddt \rmin & \geq - |q_0|_\infty \rmin^2 + \rmin \int_\T  \phi(x_-,y)( \rho(y,t) - \rmin) \dy\geq  - |q_0|_\infty \rmin^2.
	\end{split}
	\]
	The lower bound in \eqref{e:rhouplow} follows.
	
Evaluating the mass equation at extreme maximum we obtain
\[
\ddt \rmax \leq |q_0|_\infty \rmax^2 +  \rmax  \int_{|z| <R_0} \frac{1}{\cM_0|z|^{\a}} ( \rho(t,x_+ + z) - \rmax) \dz.
\]
 Let us further reduce the region of integration to $\e < |z|<R_0$ for any fixed $\e>0$. By choosing $\e$ small enough we can ensure that
\[
 \int_{\e < |z|< R_0 }  \frac{1}{|z|^{\a}} > 2 |q_0|_\infty \cM_0.
\]
Then for that fixed $\e$ we have
\[
\ddt \rmax \leq - |q_0|_\infty \rmax^2  + C \rmax .
\]
The result follows. 
\end{proof}

\subsection{Continuation of solutions}\label{s:p}
 
Our goal in this section is to establish a general continuation result that relies on 
the uniform H\"older continuity of the density. The latter will be justified in section \ref{sec:Holder}. 

\begin{proposition}\label{p:global}
Consider a local solution to a topological model with $1\leq \a<2$ given by \thm{t:lwp}. 
Suppose there are constants $\rhomin,\rhomax >0$ such that
\begin{equation}\label{e:cC}
\rhomin \leq \rho(t,x) \leq \rhomax, \quad  (t,x)\in   [0,T) \times \T.
\end{equation}
Furthermore, suppose that $\rho$ is uniformly H\"older on $ [0,T)$, i.e., there exists $\gamma>0$ such that
\begin{align}\label{e:modulus}
|\rho(t,x+z) - \rho(t,x)| \leq C|z|^\gamma, \qquad (t,x,z)\in [0,T) \times \T \times \T .
\end{align}
Then the solution remains uniformly in the Sobolev classes $(u,\rho) \in H^{s+1} \times H^{s+\a}$ on $[0,T)$ and, hence, can be continued beyond $T$.
\end{proposition}
\noindent

\begin{proof} 
  We split the proof in five steps. In steps 1-2 we establish control over derivatives of the density up to order $s$. Remarkably this can be done independently of the momentum equation. Such estimates provide bounds on the velocity derivatives up to order $s-1$. On step 3 we develop energy estimates for $\rho^{(s+1)}$ a necessary step before tackling $|u^{(s)}|_\infty$ which is done in step 4. Finally, energy estimates on $u^{(s+1)}$ will finalize the argument with the help of coercivity estimates \eqref{e:coers}. All the pointwise  estimates we used in the proof are presented in Appendix B.

\medskip\noindent 
\steps{1}{Control over $|\rho'|_\infty$} Let us differentiate  \eqref{e:r-eq}:
\begin{equation}\label{e:rhop}
\p_t \rho' + u \rho'' + u' \rho' + e'\rho + e \rho' =  \rho' \cL_\phi \rho + \rho \aL_{\phi'} \rho+ \rho \cL_\phi  \rho' .
\end{equation}
Using again $u' = e - \cL_\phi \rho$ we rewrite
\[
\p_t \rho' + u \rho'' + e'\rho + 2e \rho' =  2\rho' \cL_\phi \rho + \rho \aL_{\phi'} \rho+ \rho \cL_\phi \rho' .
\]
Evaluating at a point $x$ where $|\rho'|$ acheives its maximum and multiplying by $\rho'$ we obtain
\begin{equation}\label{e:rho-dera}
\p_t |\rho'|^2 + e' \rho \rho' + 2 e |\rho'|^2 = 2 |\rho'|^2 \cL_\phi \rho + \rho' \rho \aL_{\phi'} \rho +  \rho \rho' \cL_\phi \rho'.
\end{equation}
In view of \eqref{erho0} and \eqref{erho1} we can bound
\[
|e' \rho \rho' + 2 e |\rho'|^2| \leq C (|\rho'|^2 + |\rho'|).
\]
Thus,
\begin{equation}\label{e:rho-dera2}
\p_t |\rho'|^2 \leq  C (|\rho'|^2 + |\rho'|) + 2 |\rho'|^2 \cL_\phi \rho  + \rho' \rho \aL_{\phi'} \rho +  \rho \rho' \cL_\phi \rho'.
\end{equation}
Let us note in passing that \lem{l:topoL} justifies pointwise evaluation of all operators involved.  Due to the bound from below on $\rho$, the last term provides  dissipation. Indeed, let us note the identity
\[
\rho'(x) \d_z \rho'(x) = - \frac12 | \d_z \rho'(x)|^2 + \frac12( (\rho'(x+z))^2 - (\rho'(x))^2).
\]
Since $x$ is a point of maximum, we can see that the second difference is negative. Thus,
\begin{equation}\label{e:disslower}
\rho \rho'  \cL_\phi \rho' \leq  - c_1 \dD_\a \rho'(x).
\end{equation}
where 
\[
\dD_\a\rho'(x) =  \int_\R \frac{|\d_z \rho'(x)|^2}{|z|^{1+\a}} h(z) \dz.
\]
Let us recall the nonlinear estimate on $\dD_\a\rho'(x) $ obtained in Constantin and Vicol \cite{CV2012}, which will play a crucial role in what follows:
\begin{equation}\label{e:nlmp}
	\dD_\a\rho'(x) \geq C \frac{|\rho'(x)|^{2+\a}}{|\rho|_\infty^\a} - c |\rho'|^2_2.
\end{equation}
Here the $- c |\rho'|^2_2$ appears when we complement the cutoff function $h$ to the full unity. Given a uniform bound on $|\rho|_\infty$ we further estimates, keeping half of the dissipation as is for subsequent usage,
\begin{equation}\label{e:D12}
\dD_\a\rho'(x) \geq \frac12 \dD_\a\rho'(x) + C |\rho'(x)|^{2+\a} - c |\rho'|^2_2 .
\end{equation}
Because of the second term in \eqref{e:D12}, all powers of $\rho$ below $2+\a$ which appear in \eqref{e:rho-dera2} are absorbed. So, in particular at this stage we can rewrite  \eqref{e:rho-dera2} as 
\begin{equation}\label{e:rho-dera2a}
\p_t |\rho'|^2 \leq C + 2 |\rho'|^2 \cL_\phi \rho + \rho' \rho \aL_{\phi'} \rho - \frac12 \dD_\a\rho'(x) - c |\rho'(x)|^{2+\a}.
\end{equation}
We now invoke the estimates on the operators $\cL_\phi \rho$ and $ \aL_{\phi'} \rho$ obtained in \lem{l:Linfty} and \lem{l:1stcomm}, respectively.   We have, knowing that by our assumption the density is uniformly H\"older continuous ,
\begin{equation}\label{e:intermed1}
	|\rho'|_\infty^2 |\cL_\phi \rho (x)| \lesssim  r^{1 - \frac{\a}{2}} |\rho'|_\infty^2 \sqrt{\dD_\a[ \rho'] (x)} + r^{\g - \a} |\rho'|_\infty^2 + r^{2-\a}  |\rho'|^4_\infty.
\end{equation}
Let us fix a small $\e>0$ to be determined later, and define $r = \frac{\e}{ |\rho'|_\infty}$. Then the above is bounded by 
\[
\begin{split}
& \lesssim  \e^{1 - \frac{\a}{2}} |\rho'|_\infty^{1 + \frac{\a}{2}} \sqrt{\dD_\a[ \rho'] (x)} + c_\e |\rho'|_\infty^{2+\a - \g} + \e^{2-\a}  |\rho'|^{2+\a}_\infty \\
& \lesssim  \e^{1 - \frac{\a}{2}} |\rho'|_\infty^{2 + \a} +  \e^{1 - \frac{\a}{2}}  \dD_\a[ \rho'] (x) + \e |\rho'|_\infty^{2+\a} + C_\e + \e^{2-\a}  |\rho'|^{2+\a}_\infty.
\end{split}
\]
For $\e$ sufficiently small we can see that all these terms except for the free constant get absorbed into the dissipation term in view of \eqref{e:D12}. Continuing to the next term, in view of \lem{l:1stcomm}, and with the same choice of scale $r$, we obtain
\[
|\rho'|_\infty |\aL_{\phi'} \rho(x)| \lesssim  r^{1 - \frac{\a}{2}} |\rho'|^2_\infty \sqrt{\dD_\a[ \rho'] (x)}+ r^{\g - \a}  |\rho'|^2_\infty + r^{2-\a} |\rho'|^4_\infty,
\]
which exactly repeats \eqref{e:intermed1}. 

Going back to \eqref{e:rho-dera2a} we arrive at
\begin{equation}\label{e:rho-dera5}
\p_t |\rho'|^2 \leq   c_1 - c_2 \dD_\a \rho'.
\end{equation}
This finishes the proof of control over $\rho'$.

\medskip\noindent 
\steps{2}{Control over $|\rho^{(s)}|_\infty$ and $|u^{(s-1)}|_\infty$}  We now establish uniform control over the maximal allowed derivative of $\rho$ in $L^\infty$ metric. Note that $H^{s + \a}$ embeds into $W^{s,\infty}$ for the range in question $1\leq \a <2$. So, initially and on the local time interval $[0,T)$ we have the density in $W^{s,\infty}$ class non-uniformly at the moment. Once this step is accomplished we obtain automatically a uniform bound on $u^{(s-1)}$. Indeed, by \lem{l:Lkbound},
\begin{multline*}
|u^{(s-1)}|_\infty\leq |e^{(s-2)}|_\infty+|(\cL_\phi \rho)^{(s-2)}|_\infty \lesssim  |\rho^{(s-2)}|_\infty + \sqrt{\dD_\a[\rho^{(s-1)}]} +  |\rho^{(s-1)}|_\infty 
\lesssim  C+ |\rho^{(s)}|_\infty.
\end{multline*}

 We will argue by induction. The initial hypothesis was established on the previous step. Let us now assume that we have a uniform control over $|\rho^{(k-1)}|_\infty$ for $2 \leq k\leq s$, and obtain control over $|\rho^{(k)}|_\infty$.  

Differentiating the continuity equation $k$ times and expanding we obtain
\begin{equation}\label{e:rhok}
	\p_t \rho^{(k)} + u \rho^{(k+1)} + \sum_{l=1}^k c_{k,l} u^{(l)} \rho^{(k+1 - l)} +  u^{(k+1)} \rho =0.
\end{equation}
Evaluating at the maximum of $|\rho^{(k)}|$ and multiplying by $\rho^{(k)}$ the term $u \rho^{(k+1)}$  drops out. In the rest we replace all $u$'s with the corresponding $e$-expression. So, let us consider the end-point case first,
\[
 \rho^{(k)} u^{(k+1)} \rho =  \rho^{(k)} e^{(k)} \rho - \rho^{(k)} (\cL_\phi \rho)^{(k)} \rho .
 \]
By the induction hypothesis and \eqref{erhok} we have
\[
|e^{(k)}| \lesssim |\rho^{(k)}| + C,
\]
and so,
\[
| \rho^{(k)} e^{(k)} \rho | \lesssim  |\rho^{(k)}|^2 + C.
\]
Next, we have 
\[
 (\cL_\phi \rho)^{(k)} = \cL_\phi (\rho^{(k)}) +  [(\cL_\phi \rho)^{(k)}-  \cL_\phi (\rho^{(k)})].
\]
For the dissipation, we have as usual, in view of the nonlinear maximum estimate and the fact that $\rho^{(k-1)}$ is under control:
\begin{equation}\label{e:dissrhok}
 \rho^{(k)} \cL_\phi (\rho^{(k)}) \lesssim - \dD_\a [\rho^{(k)}](x) -  |\rho^{(k)}|_\infty^{2+\a}.
\end{equation}
For the commutator we encounter a cubic term of top order in the case $k=2$. Therefore we use \eqref{e:k2} with small $\e$ 
\[
	|\rho'' [(\cL_\phi \rho)'' -  \cL_\phi (\rho'')]| \lesssim |\rho''|_\infty \sqrt{ \dD_\a [\rho''](x)} + \e |\rho''|_\infty^3 +C_\e \lesssim |\rho''|_\infty^2 + \e \dD_\a [\rho''](x)+\e |\rho''|_\infty^3 +C_\e.
\]
In view of \eqref{e:dissrhok}, the terms $\e \dD_\a [\rho''](x)+\e |\rho''|_\infty^3$ are absorbed by dissipation. For general $k \geq 3$ , we use \eqref{e:k3} by replacing $ \sqrt{\dD_\a[ \rho^{(k-1)}] (x)} \lesssim  | \rho^{(k)}|_\infty$:
\[
	| \rho^{(k)} (\cL_\phi f)^{(k)} -  \cL_\phi (f^{(k)}) (x) |  \lesssim 	| \rho^{(k)}|_\infty \sqrt{\dD_\a[ \rho^{(k)}] (x)} 
	+| \rho^{(k)}|_\infty^2  + 1 \lesssim \e \dD_\a[ \rho^{(k)}] (x) + c_\e | \rho^{(k)}|_\infty^2  +1.
\]
Again, the dissipation term is absorbed.

Next, let us look into intermediate terms, $1\leq l \leq k$,
\[
\rho^{(k)} u^{(l)} \rho^{(k+1 - l)} = \rho^{(k)} e^{(l-1)} \rho^{(k+1 - l)} - \rho^{(k)} (\cL_\phi \rho)^{(l-1)} \rho^{(k+1 - l)} .
\]
Since $l- 1 \leq k-1$ we have all $e^{(l-1)} $ uniformly bounded, hence, 
\[
|\rho^{(k)} e^{(l-1)} \rho^{(k+1 - l)} | \lesssim |\rho^{(k)} |^2 + C.
\]
Finally for the remaining terms $\rho^{(k)} (\cL_\phi \rho)^{(l-1)} \rho^{(k+1 - l)}$ we appeal to \lem{l:Lkbound}. So, if  $l=k$,  by \eqref{e:LkboundA}
\[
| \rho^{(k)} (\cL_\phi \rho)^{(k-1)}(x) \rho' | \lesssim |\rho^{(k)} |_\infty  ( \sqrt{\dD_\a[ \rho^{(k)}] (x)} + |\rho^{(k)}|_\infty+1) \lesssim \e \dD_\a[ \rho^{(k)}] (x) + c_\e |\rho^{(k)} |_\infty^2 + 1,
\]
so this term is taken care of.  For $l=k-1$, if $k=2$, we estimate using more refined bound \eqref{e:Linfty},
\[
| \rho''  (\cL_\phi \rho) \rho'' | \lesssim  \e | \rho''|_\infty^3 + c_\e  | \rho''|_\infty^2,
\]
which is absorbed. And for $k>2$, we obtain from \eqref{e:LkboundB}
\[
| \rho^{(k)} (\cL_\phi \rho)^{(k-2)} \rho'' | \lesssim | \rho^{(k)}|_\infty^2 + 1.
\]
Finally, for all $1\leq l\leq k-2$, we use \eqref{e:LkboundC}
\[
| \rho^{(k)} (\cL_\phi \rho)^{(l-1)} \rho^{(k+1-l)} | \lesssim  | \rho^{(k)}|_\infty^2 + 1.
\]
We have obtained
\[
\p_t  |\rho^{(k)} |^2_\infty \lesssim |\rho^{(k)} |^2_\infty+1,
\]
and the result follows. 

\medskip\noindent 
\steps{3}{Energy estimates for  $\rho^{(s+1)}$} Before going into estimates for the momentum, we make one more intermediate step by establishing that $\rho^{(s+1)} \in L^\infty_t L^2_x \cap L^2_t H^{\a/2}_x$. The basic energy estimate for $\rho^{(s+1)}$ is obtained in the standard way. To simplify some computations, let us note  the a priori bound 
\[
\| u \|_{C^{s-1}} \leq \| e \|_{C^{s-2}} +\| \cL_\phi \rho \|_{C^{s-2}} \lesssim \| \rho \|_{C^{s-2}} + \sqrt{\dD_\a[\rho^{(s-1)}]} + \| \rho \|_{C^{s-1}} 
\lesssim  C+ \| \rho \|_{C^{s}}  \leq C.
\]
With this and expansion \eqref{e:rhok} we obtain
\[
\ddt | \rho^{(s+1)} |_2^2 \lesssim  | \rho^{(s+1)} |_2^2 + \int_\T \rho^{(s+1)} u^{(s)} \rho'' \dx + \int_\T \rho^{(s+1)} u^{(s+1)} \rho' \dx +\int_\T \rho^{(s+1)} u^{(s+2)} \rho \dx .
\]
By replacing the remaining velocities with $e - \cL_\phi \rho$ we now estimate each term:
\[
\left| \int_\T \rho^{(s+1)} u^{(s)} \rho'' \dx \right| \lesssim |\rho^{(s+1)} |_1 |\rho^{(s-1)}|_\infty  |\rho''|_\infty + \left| \int_\T \rho^{(s+1)} (\cL_\phi \rho)^{(s-1)} \rho'' \dx \right|
\]
applying \eqref{e:LkboundB} with $k = s+1$,
\[
\lesssim |\rho^{(s+1)} |_2 + |\rho^{(s+1)} |_2^2 + \| \rho\|^2_{H^{s+\a/2}} \lesssim |\rho^{(s+1)} |_2^2+\e\| \rho\|^2_{H^{s+1+\a/2}} + c_\e.
\]
The $H^{s+1+\a/2}$-norm  will be absorbed into dissipation. Next,
 \[
 \int_\T \rho^{(s+1)} u^{(s+1)} \rho' \dx \lesssim \int_\T \rho^{(s+1)} \rho^{(s)} \rho' \dx + \int_\T \rho^{(s+1)} (\cL_\phi \rho)^{(s)}\rho' \dx,
 \]
 and applying \eqref{e:LkboundA}, 
 \[
\lesssim  |\rho^{(s+1)} |_2^2 + \int_\T |\rho^{(s+1)}(x)|\sqrt{\dD_\a[\rho^{(s+1)}](x)} \dx + \| \rho\|^2_{H^{s+\a/2}}  \lesssim |\rho^{(s+1)} |_2^2+\e\| \rho\|^2_{H^{s+1+\a/2}} + c_\e.
\]
Finally,
\[
\int_\T \rho^{(s+1)} u^{(s+2)} \rho \dx  = \int_\T \rho^{(s+1)} e^{(s+1)} \rho \dx  -\int_\T \rho^{(s+1)} (\cL_\phi \rho)^{(s+1)}\rho \dx .
\]
Via \eqref{erhok} the first term is bounded by $|\rho^{(s+1)} |_2^2$. As for the second we use commutator estimates
\begin{multline*}
\int_\T \rho^{(s+1)} (\cL_\phi \rho)^{(s+1)}\rho \dx \lesssim  - \| \rho\|^2_{H^{s+1+\a/2}} + \int_\T| \rho^{(s+1)}(x)|\sqrt{\dD_\a[\rho^{(s+1)}](x)}\dx\\
+\int_\T| \rho^{(s+1)}(x)|\sqrt{\dD_\a[\rho^{(s)}](x)}\dx
+ |\rho^{(s+1)} |_2^2 \lesssim  - \| \rho\|^2_{H^{s+1+\a/2}} + \e\| \rho\|^2_{H^{s+1+\a/2}} + c_\e|\rho^{(s+1)} |_2^2.
\end{multline*}
All the estimates now add up to 
\[
\ddt | \rho^{(s+1)} |_2^2 \lesssim -\frac12  \| \rho\|^2_{H^{s+1+\a/2}} + c_\e | \rho^{(s+1)} |_2^2 + c_\e.
\]
This shows $\rho^{(s+1)} \in L^\infty_t L^2_x \cap L^2_t H^{\a/2}_x$ and the step is complete.

\medskip\noindent 
\steps{4}{Control over $|u^{(s)}|_\infty$}   Due to close resemblance of the momentum equation \eqref{e:m} to the continuity equation written in parabolic form \eqref{e:rparab} it is easier to work with the momentum variable $m$. Since all the high order spaces are Banach algebras, establishing control over $m$ is equivalnent to establishing control over $u$:
\[
\|u \|_X \lesssim \|m\|_X \left\| \rho^{-1} \right\|_X \lesssim \|m\|_X \|\rho\|_X, \quad \|m\|_X \lesssim \|u\|_X \|\rho\|_X,
\]
which applies to $X = H^s, C^s$, etc. Knowing that $\rho \in X$ shows $\|u\|_X \sim \|m\|_X$. In particular this is the case for all $C^k$, $k\leq s$.

We do have automatic uniform bound in $C^{s-1}$ as a consequence of the previous step. Indeed, by \lem{l:Lkbound},
\begin{multline*}
	\| m \|_{C^{s-1}} \lesssim \| u \|_{C^{s-1}} \leq \| e \|_{C^{s-2}} +\| \cL_\phi \rho \|_{C^{s-2}} \lesssim \| \rho \|_{C^{s-2}} + \sqrt{\dD_\a[\rho^{(s-1)}]} + \| \rho \|_{C^{s-1}} \\
	 \lesssim  C+ \| \rho \|_{C^{s}}  \leq C.
\end{multline*}
So, essentially we need to complete one more step up. 

Differentiating  \eqref{e:m} $s$ times, testing with $m^{(s)}$ and evaluating at the maximum, we obtain
\[
\p_t | m^{(s)} |_\infty^2 + m^{(s)}\sum_{l=1}^s u^{(l)} m^{(s+1-l)}  + (em)^{(s)} m^{(s)} = (\rho \cL_\phi m)^{(s)} m^{(s)}.
\]
The $e$-term is under control:
\[
| (em)^{(s)} m^{(s)} | \leq  |(em)^{(s)}|_\infty |m^{(s)} |_\infty \lesssim \|\rho\|_{C^s} \|m\|^2_{C^s} \lesssim \|m\|^2_{C^s}.
\]
Next, using the induction hypothesis,
\[
\left|m^{(s)} \sum_{l=1}^s u^{(l)} m^{(s+1-l)}  \right| \lesssim (|u'|+...+|u^{(s-1)}|)  |m^{(s)} |^2_\infty +   |u^{(s)}| |m^{(s)} | |m'|_\infty \lesssim  |m^{(s)} |^2_\infty.
\]
So, further argument is reduced to estimating the dissipation term. We have for all $1 \leq l \leq s$,
\[
|m^{(s)} \rho^{(l)} (\cL_\phi m)^{(s-l)} (x) | \lesssim |m^{(s)}|_\infty | (\cL_\phi m)^{(s-l)}(x) |
\]
and using \lem{l:Lkbound},
\[
\lesssim  |m^{(s)}|_\infty \left( \sqrt{\dD_\a[m^{(s)}](x)} +   |m^{(s)}|_\infty +C \right) \lesssim c_\e  |m^{(s)}|_\infty^2 + \e \dD_\a[m^{(s)}](x) + C.
\]
The $D_\a$-term will be absorbed subsequently.  So, it comes down to 
\[
\rho (\cL_\phi m)^{(s)} m^{(s)}.
\]
As usual, $\cL_\phi (m^{(s)}) m^{(s)}$ produces dissipation $\dD_\a[m^{(s)}](x)$, and all that remains  to estimate is the commutator, for which we use \lem{l:kcomm} with $r \sim 1$,
\begin{multline*}
	|m^{(s)}|_\infty	| (\cL_\phi m)^{(s)} (x) -  \cL_\phi (m^{(s)}) (x) |   \lesssim    |m^{(s)}|_\infty \left(\sqrt{\dD_\a[ m^{(s)}] (x)} +  \sqrt{\dD_\a[ \rho^{(s)}] (x)} \right) + |m^{(s)}|^2_\infty  \\
	c_\e  |m^{(s)}|_\infty^2 + \e \dD_\a[ m^{(s)}] (x) + \e \dD_\a[ \rho^{(s)}] (x).
\end{multline*}
It remains to notice that 
\[
|\dD_\a[ \rho^{(s)}] (x) | \lesssim |\rho^{(s+1)}|_q^2, \quad \text{for } q > \frac{2}{2-\a}.
\]
Since $H^{\a/2} \hookrightarrow H^{1/2} \hookrightarrow L^q$, for any $q<\infty$, we have $|\rho^{(s+1)}|_q^2\in L^1$ by the previous step. We conclude that the term $\dD_\a[ \rho^{(s)}] (x)$ is $L^1$-integrable in time. Thus,
\[
\p_t | m^{(s)} |_\infty^2 \lesssim | m^{(s)} |_\infty^2 - \dD_\a[ m^{(s)}]  + f(t),
\]
where $f\in L^1([0,T])$. This finishes the step.

\medskip\noindent 
\steps{5}{Energy estimates for  $u^{(s+1)}$ and conclusion of the proof}  Since the momentum equation is structurally the same as the continuity, this step is entirely similar to Step 4. The use of commutator estimates of \lem{l:kcomm} and \lem{l:Lkbound} is identical with $f = m$ due to the fact that at this point we are in the same position in terms of control of $m$ as we were at the beginning of Step 4. We thus conclude
\[
m^{(s+1)} \in L^\infty_t L^2_x \cap L^2_t H^{\a/2}_x,
\]
and via Banach algebra inequality $\|u\|_X \leq \|m\|_X \|1/\rho\|_X\sim \|m\|_X \|\rho\|_X$ fo the classes in question, we obtain 
\[
u^{(s+1)} \in L^\infty_t L^2_x \cap L^2_t H^{\a/2}_x.
\]
To conclude the proof it remains to notice that via the $e$-quantity, we have $(\cL_\phi \rho)^{(s+1)} \in L^\infty_t L^2_x$. Due to \eqref{e:coers},
\[
\|\rho\|^2_{{H}^{s+\alpha}} \lesssim C+ \| \rho\|_{{H}^{s+\a/2}}^N \lesssim C + \| \rho\|_{{H}^{s+1}}^N.
\]
On Step 3 we already established uniform control over $\| \rho\|_{{H}^{s+1}}$.  The proof is finished. 
\end{proof}  

\ifx
Let us note that the more natural space for the density-enstrophy would be the class $L^2_t H_x^{s+\a +\frac{\a}{2}}$ for $\a>1$. While it is very likely that the density  does belong to this class, proving this would involve rather technical fractional energy estimates directly in $H^{s+\a}$, which we will postpone to future work. 
\fi
 
\subsection{H\"older regularization of the density}\label{sec:Holder}

In this section we  derive the H\"older regularity of the density ---  its H\"older \emph{regularization} follows from the fractional diffusion embedded in our topological alignment term. The proof is obtained by various techniques of fractional  parabolicity depending on $\a$. Combined with \prop{p:global}, we immediately obtain global existence and conclude \thm{t:mainI}.

\subsubsection{{\bf Case $1<\a<2$ via Schauder}.}\label{s:sch}

In this particular case the regularization will follow from a kinematic argument based on the Schauder estimates as in \cite{CS2011,JX2015}.  So, we start by rewriting the relation between $\rho$, $u$, and $e$ as follows
\begin{equation}\label{e:Sch1}
\p_x^{-1}  \cL_\phi \rho = \p_x^{-1} e - u \in L^\infty.
\end{equation}
In the purely metric case this of course implies $\rho\in C^{1-\a}$ immediately. For the topological models the conclusion is not so straightforward,  and in fact may not even be true up to regularity $1-\a$.  

First let us make an observation that $ \cL_\phi \rho = \p_x (\cF \rho)$, where 
\[
\cF \rho(x) =  \int \frac{\sgn(z) \ln \dd_\rho(x+z,x) }{ |z|^{\a}} h(z) \dz.
\]
Next, by symmetrization 
\[
\cF \rho(x) = \frac12 \int \frac{ \ln \dd_\rho(x+z,x) - \ln \dd_\rho(x-z,x)  }{ |z|^{\a}} \sgn(z) h(z) \dz.
\]
Now we use the expansion 
\begin{equation}\label{e:taylorlog}
	\begin{split}
&	\ln \dd_\rho(x+z,x) -  \ln \dd_\rho(x-z,x) \\
	& =[\dd_\rho(x+z,x) -   \dd_\rho(x-z,x)] \int_{0}^1 \frac{\dth}{ \th \dd_\rho(x+z,x)+ (1-\th) \dd_\rho(x-z,x)}.
	\end{split}
\end{equation}
Next,    
\[
[\dd_\rho(x+z,x) - \dd_\rho(x-z,x)] \sgn(z) = \int_x^{x+z} \rho(y) \dy + \int_x^{x-z} \rho(y) \dy = \int_{-z}^z \rho(x+w) \sgn w \dw.
\]
We can now subtract the total mass from the density without changing the result. However, the function $\rho - \cM_0$ is a mean-zero function.  Hence, $\rho - \cM_0 = f'$, for some $f$. Continuing we obtain
\[
[\dd_\rho(x+z,x) - \dd_\rho(x-z,x)] \sgn(z) = \int_{-z}^z f'(x+w) \sgn(w)\dw = f(x+z)+f(x-z) - 2 f(x),
\]
which is the second order finite difference of $f$.  We thus obtain
\[
\cF \rho(x) = \int  [f(x+z)+f(x-z) - 2 f(x)] K(x,z,t) \dz,
\]
where the kernel $K(x,z,t)$ is given  by
\[
K(x,z,t) =  \frac{h(z)}{|z|^{\a}}  \int_{0}^1 \frac{\dth}{ \th \dd_\rho(x+z,x)+ (1-\th) \dd_\rho(x-z,x)}.
\]
It satisfies the following four conditions:
\begin{itemize}
    \item[(i)] $\displaystyle{  \frac{\one_{|z|<R_0}}{|z|^{1+\a}}\lesssim K(x,z,t) \lesssim  \frac{\one_{|z|<2R_0}}{|z|^{1+\a}}}$;
    \item[(ii)] $K(x,-z,t) = K(x,z,t)$;
    \item[(iii)]  $\displaystyle{|z|^{2+\a}| K(x+h,z,t) - K(x,z,t)| \leq C |h|}$;
\item[(iv)]  $\displaystyle{|\p_z(|z|^{1+\a} K(x,z,t))| \leq C |z|^{-1}}$.    
\end{itemize}
Here the inequalities involve  generic  constants which may depend only on the density but  not on its derivatives.  Indeed, (i) is trivial. As to (iv), we have
\begin{equation}\label{e:kiv}
|z|^{1+\a} K(x,z,t) = h(z) |z| \int_0^1 \left[ \th \dd_\rho(x+z,x) +(1-\th)\dd_\rho(x-z,x) \right]^{-1} \dth.
\end{equation}
Given that $\dd_\rho(x+z,x) \sim |z|$, it is clear that this expression is uniformly bounded by a constant.  It will remain so if $\p_z$ falls on $h$. The bound gains a negative power $|z|^{-1}$ when $\p_z$ falls on $|z|$. Next, observe that
\[
\p_z \dd_\rho(x\pm z,x) = \rho(x \pm z) \sgn(z),
\]
which is a uniformly bounded quantity. So, any derivative that falls on the distance inside the expression \eqref{e:kiv} reduces the power of that term by $1$, while the rest remains uniformly bounded. 

To verify (iii) we can even prove a stronger inequality
\[
|z|^{2+\a}| \p_x K(x,z,t)| \leq C .
\]
Indeed, in this case we recall \eqref{e:derd} which implies that $\p_x \dd_\rho(x \pm z,x)$ remains uniformly bounded. So, we have
\[
|z|^{2+\a}\p_x K(x,z,t) = h(z)|z|^{2} \p_x   \int_0^1 \left[ \th \dd_\rho(x+z,x) +(1-\th)\dd_\rho(x-z,x) \right]^{-1} \dth.
\]
In view of the above observation, the order of the partial of the entire expression in parenthesis is $|z|^{-2}$.  This finishes the verification.

So, the initial relation \eqref{e:Sch1} can be stated now as a fractional elliptic problem:
\begin{equation}\label{e:Sch2}
\int  [f(x+z)+f(x-z) - 2 f(x)] K(x,z,t) \dz = g(x)  \in L^\infty.
\end{equation}
Under the assumptions (i) -- (iv), it is known, see for example \cite{CS2011,JX2015}, that any bounded solution $f$ to \eqref{e:Sch2} satisfies $f \in  C^{1+\g}$ for some positive $\g>0$. This readily implies $\rho \in C^{\g}$ and concludes the argument.

\subsubsection{{\bf Case  $\a=1$ via De Giorgi}.}\label{s:degiorgi}
In this section we present a regularization result for the case $\a =1$. We state our result more precisely in the following proposition.

\begin{proposition}\label{p:degiorgi} Consider the case  $\a = 1$. Assume the  density is uniformly bounded \eqref{e:cC}. Then there exists a $\g >0$ such that $\displaystyle [\rho]_{\g} \leq \frac{C}{t^\g}$ for all $t\in (0,T]$. Here $C$ depends on the bounds on the density on $[0,T]$.
\end{proposition}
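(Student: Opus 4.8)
The plan is to obtain H\"older regularity of $\rho$ by a De Giorgi level-set iteration, adapting the nonlocal scheme of Caffarelli--Chan--Vasseur \cite{CCV2011} to the three features absent there: a bounded, scaling-\emph{critical} drift, a bounded zero-order source, and a kernel that is both non-symmetric and solution-dependent.

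\emph{Set-up and scaling.} By the conservation law \eqref{e:e} and the pointwise bound \eqref{erho0}, the density solves the fractional drift--diffusion equation
\begin{equation*}
\rho_t + u\rho_x + e\rho = \rho\,\aL_\phi\rho,\qquad |u|\le|u_0|_\infty,\quad |e|\lesssim\rho,
\end{equation*}
and, under \eqref{e:cC} (recall $\a=1$), the symmetric kernel of $\aL_\phi$ obeys $c|z|^{-2}\one_{|z|<R_0}\le\phi(x,x+z)\le C|z|^{-2}\one_{|z|<2R_0}$. Since $\a=1$ the natural invariance is the parabolic one $(t,x)\mapsto(\scale t,\scale x)$, which preserves this class up to harmless (subcritical) modifications of the source. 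Because the kernel depends on $\rho$ (through $\dd_\rho$ and the outer factor $\rho(x)$), I would decouple the nonlinearity in the spirit of \cite{GIMV2017}: freeze $K=K[\rho]$ and prove the De Giorgi estimate uniformly over \emph{all} linear equations $w_t+bw_x+cw=\rho\,\aL_\phi w+g$ with $(b,c,g)\in L^\infty$ and kernel obeying the structural bounds, with constants depending only on those bounds --- hence only on $|\rho|_{L^\infty([0,T])}$ and on the lower bound of \lem{l:below}, which is strictly positive on $[0,T]$. Since $\aL_\phi$ is linear and annihilates constants, the shifts $w\mapsto w-m$ and the reflection $w\mapsto-w$ remain in this class, so the supremum and the infimum of $\rho$ are treated by the same one-sided argument.

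\emph{Energy estimate and first De Giorgi lemma.} Testing against the localized truncations $\th^2(\rho-C_k)_+$ ($\th$ a spatial cut-off, $C_k$ an increasing sequence of levels), the diffusion yields the good term $-c[(\rho-C_k)_+]_{H^{1/2}}^2$ after symmetrization via \eqref{e:Ldef}. The critical drift is controlled by the identity $u_x=e-\aL_\phi\rho$ from \eqref{e:e}: writing $\int u\rho_x\,\th^2(\rho-C_k)_+=-\tfrac12\int u_x\,\th^2(\rho-C_k)_+^2+(\text{cut-off error})$ and substituting $u_x$, the term $\tfrac12\int(\aL_\phi\rho)\,\th^2(\rho-C_k)_+^2$ \emph{recombines} with the right-hand side $\int(\aL_\phi\rho)\,\rho\,\th^2(\rho-C_k)_+$ into $\int(\aL_\phi\rho)\,\th^2 G_k(\rho)$, where $G_k(\rho)=(\rho-C_k)_+\big(\rho-\tfrac12(\rho-C_k)_+\big)$ is a non-decreasing function of $\rho$; hence this combined term is again dissipative, $\le -c_T[(\rho-C_k)_+]_{H^{1/2}}^2$ (here the lower bound in \eqref{e:cC} enters when comparing $G_k'$ with $\one_{\{\rho>C_k\}}$). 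The remaining contributions --- the $e$-terms, the cut-off commutators with $\aL_\phi$ (absorbed using $|z|^{-2}\in L^1$ away from the diagonal together with the a priori bound $\rho\in L^2_tH^{1/2}_x$ coming from the basic energy identity), and the far-field tail of the compactly supported kernel --- are the usual bounded source and level-set-measure terms. Running the Caccioppoli recursion with the one-dimensional Sobolev gain $H^{1/2}\hookrightarrow L^q$, $q<\infty$, gives the nonlocal De Giorgi lemma: if $\rho$ is small in $L^2$ on the unit cylinder and controlled at infinity, then $\rho\le 1-\th_0$ on a smaller cylinder.

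\emph{Isoperimetric lemma, oscillation decay, conclusion.} The second (measure) lemma --- if $\rho$ lies between two consecutive levels on a set of intermediate mass, then that ``intermediate layer'' occupies a definite fraction of the ball --- follows from the same energy inequality combined with a nonlocal isoperimetric estimate for $H^{1/2}$ functions, the asymmetric part of the action being controlled by $\dD_\a$-dissipation along the lines of the pointwise lower bounds of \cite{CV2012} used elsewhere in the paper. Combining the two lemmas in the standard dichotomy (one of $\{\rho\le0\},\{\rho\ge0\}$ fills half the ball) gives a diminish-of-oscillation inequality $\mathrm{osc}_{B_{r/2}}\rho\le(1-\th)\,\mathrm{osc}_{B_r}\rho$; the nonlocal tail generated by rescaling is absorbed through the telescoping series of earlier oscillations, which fixes $\th$ and the exponent $\g>0$. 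Iterating over dyadic scales down from the largest parabolic cylinder that fits inside $(0,t)\times\T$ converts this into $[\rho(t)]_{\g}\lesssim t^{-\g}$ with constant depending only on the bounds in \eqref{e:cC}; together with \prop{p:global} this closes the continuation, hence the $\a=1$ case of \thm{t:main}. The main obstacle throughout is to run every step \emph{quantitatively}, with constants insensitive to the smoothness of $(\rho,u)$: the drift is merely bounded and, at $\a=1$, exactly critical, so the recombination identity $u_x=e-\aL_\phi\rho$ is indispensable; and the kernel is at once non-symmetric and solution-dependent, which is precisely why the estimate must be proved for the whole admissible linear class rather than quoted off the shelf.
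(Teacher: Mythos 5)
Your overall architecture is the same as the paper's --- a De Giorgi scheme modeled on \cite{CCV2011}, with the nonlinearity and the critical drift handled in the spirit of \cite{GIMV2017} by exploiting the scaling \eqref{e:urresc}--\eqref{e:rescale} and, crucially, the identity $u_x=e-\aL_\phi\rho$ from \eqref{e:e} together with $|e|\lesssim\rho$ from \eqref{erho0}. Where you genuinely diverge is the symmetrization device. The paper does not work on $\rho$ directly: it sets $w=\ln\rho$, which turns \eqref{e:rhoe0} into the exactly linear equation \eqref{e:w} with the \emph{fully symmetric} kernel $K(x,y,t)=\phi(x,y)\int_0^1\rho^\th(y)\rho^{1-\th}(x)\dth$ obeying \eqref{e:kerK}, bounded source $-e$, and drift $u$; after that, the truncated energy inequality keeps the standard CCV bilinear structure, and the only new terms are the source and a cubic correction $B_R(\w,(\w-\psi)_+^2)$ coming from $u_x=e-C_1\cL_{K_R}\w$, which is absorbed under the smallness condition \eqref{e:e0}. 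You instead keep the equation in $\rho$ and recover dissipativity by recombining the drift-divergence contribution with the nonlinear diffusion into $\int(\aL_\phi\rho)\,\th^2 G_k(\rho)$ with $G_k$ non-decreasing (using the lower bound in \eqref{e:cC}). That recombination is sound --- it is the same mechanism the paper uses in Section \ref{s:flock} --- and it can replace the log transform; what it costs is that every CCV lemma must be re-proved with composite truncations $G_k$ (and their $x$-dependent versions when the levels are Lipschitz barriers), plus the cutoff commutators you mention, whereas the log transform buys an honest symmetric linear problem to which \cite{CCV2011} applies with only the drift/source/cubic-term modifications tracked.

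Two places in your sketch need the care the paper spends and are currently glossed over. First, your framing ``prove the estimate uniformly over all linear equations $w_t+bw_x+cw=\rho\,\aL_\phi w+g$ with bounded $(b,c,g)$'' is inconsistent with your own key step: for an arbitrary bounded drift $b$ at $\a=1$ the term $\int b\,\p_x(\cdot)$ is exactly critical and cannot be closed, since $b_x$ is not controlled; your recombination requires $b=u$ with $u_x=e-\aL_\phi\rho$ and the diffusion prefactor equal to the same $\rho$. The paper resolves this precisely by \emph{not} freezing the coefficients --- it ``keeps protocol of the relation between $w$ and $u$'' under every rescaling --- and you must do the same. Second, the De Giorgi iteration involves amplitude renormalizations ($\w=w_R/C_1+C_2$, and the shift-amplitude steps in the diminish-of-oscillation argument) that amplify the source by $1/C_1$; this is why the paper imposes $RC_1>1$, requires $\e_0<\l$ in the second lemma, pre-scales so that $\e_0<\l^{2k_0}$ with $k_0$ depending only on $\L$, and demands $R(1-\l^*/4)>1$ in the final zoom-and-shift iteration, using \eqref{e:eR} to make $\e_0$ small. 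Your proposal invokes parabolic rescaling only qualitatively (``harmless subcritical modifications of the source''); without this quantitative bookkeeping the oscillation-decay step does not close, so it should be added, after which your route does yield the stated bound $[\rho(t)]_\g\leq C t^{-\g}$.
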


Let us make some preliminary remarks. Our proof is based on blending our model into the settings of Caffarelli, Chan, Vasseur work \cite{CCV2011} which adopts the method of De  Giorgi to non-local equation with symmetric kernels. We note however that the result of \cite{CCV2011} is not directly applicable to our model due to the presence of drift and force in the continuity equation, and in addition we  lack symmetry of the kernel. The forced case was considered in a similar situation in Golse et al \cite{GIMV2017}, where the control over the force is achieved via pre-scaling of the equation. We will use a similar argumentation here as well.  We proceed in five steps.

\medskip
\noindent  
\steps{1}{Symmetric form of the continuity equation}  Let us recall the continuity equation in parabolic form:
\begin{equation}\label{e:rhoe0}
    \rho_t + u \rho_x = \rho \cL_\phi \rho - e\rho.
\end{equation}
To get rid of the $\rho$ prefactor we will perform the following procedure: divide \eqref{e:rhoe0} by $\rho$ and write evolution equation for the new variable $w = \ln \rho$,
\[
w_t + u w_x = \cL_\phi e^w  - e.
\]
Using that
\[
e^{w(y)} - e^{w(x)} = (w(y) - w(x)) \int_0^1 \rho^\th(y) \rho^{1-\th}(x) \dth,
\]
we further rewrite the equation as
\begin{equation}\label{e:w}
w_t + u w_x = \cL_K w  - e.
\end{equation}
where 
\[
K(x,y,t) = \phi(x,y) \int_0^1 \rho^\th(y) \rho^{1-\th}(x)\dth 
\]
In view of the bounds on the density, the new kernel satisfies 
\begin{equation}\label{e:kerK}
\frac{\one_{|x-y|<R_0}}{|x-y|^{1+\a}} \lesssim K(x,y) \lesssim \frac{\one_{|x-y|<2R_0}}{|x-y|^{1+\a}},
\end{equation}
and now is fully symmetric
\[
K(x,y,t) = K(y,x,t).
\]
Clearly,   H\"older continuity of $w$ is equivalent to that of $\rho$, so we will work with \eqref{e:w} instead.

In what follows we treat the term $-e$ as a passive source. However we cannot treat $u$ similarly since the derivative $u_x$ that will come up in the truncated energy inequality will have to be recycled back through its connection with $e$. We therefore first discuss scaling properties of the system. 

\medskip
\noindent  
\steps{2}{Rescaling} Let us adopt the point of view that our solution $(u,\rho)$ is defined periodically on the real line $\R$. Elementary computation shows that if 
$(u,\rho)$ is a solution and $R > 0$, then the new pair 
\begin{equation}\label{e:urresc}
u_R = u\left( t_0 + \frac{t}{R^\a}, x_0 + \frac{x}{R}\right) , \quad \rho_R = \rho\left( t_0 + \frac{t}{R^\a}, x_0 + \frac{x}{R}\right) 
\end{equation}
satisfies the rescaled system

\begin{equation}\label{e:rescale}
\left\{
\begin{split}
\p_t\rho_R +R^{1-\a}(\rho_R u_R)_x&=0,\\
\p_t u_R +R^{1-\a} u_R u_R' &=\int_\R \rho_R(y) (u_R(y) - u_R(x)) \phi_R(x,y) \dy,
\end{split}\right.
\end{equation}
where the new kernel is given by
\[
\phi_R(x,y,t) = \frac{1}{R^{1+\a}} \phi\left( x_0 + \frac{x}{R},x_0 + \frac{y}{R}, t_0 + \frac{t}{R^\a} \right).
\]
Note that for a given bound on the density $c<\rho<C$ on a given time interval $I$, the new kernel still satisfies
\[
\l \frac{\one_{|x-y| \leq R_0 R}}{|x-y|^{1+\a}}  \leq \phi_R(x,y)  \leq \L \frac{\one_{|x-y|<2R_0 R}}{|x-y|^{1+\a}},
\]
on time interval $R^{\a}(I - t_0)$, and the constants $\L,\l$ are independent of $R$. Thus, if $R>1$, the bound from below holds on a wider space and time intervals. The corresponding $e$-quantity rescales to
\[
e_R = R^{1-\a} u_R' + \cL_{\phi_R} \rho_R = \frac{1}{R^\a} e\left( t_0 + \frac{t}{R^\a}, x_0 + \frac{x}{R}\right) ,
\]
and satisfies 
\[
\p_t e_R + R^{1-\a}( u_R e_R)_x = 0.
\]
Hence, $e_R / \rho_R$ is transported and as a consequence we obtain an priori bound 
\begin{equation}\label{e:eR}
|e_R|\lesssim \frac{1}{R^\a} \rho_R\lesssim \frac{1}{R^\a}.
\end{equation}
The rescaled continuity equation becomes
\[
\p_t \rho_R + R^{1-\a} u_R \rho'_R + e_R \rho_R = \rho_R \cL_{\phi_R} \rho_R.
\]
The corresponding $w$-equation reads
\[
\p_t w_R  + R^{1-\a} u_R w_R' = \cL_{K_R} w  - e_R,
\]
where the kernel $K_R$ satisfies the same bound \eqref{e:kerK} for all $R\geq 1$. 

So, it is clear that the drift remains under control for $\a \geq 1$, and is scaling invariant in the case $\a=1$.

\medskip
\noindent  
\steps{3}{First De Giorgi lemma}  We return to the symmetrized version of the continuity equation \eqref{e:w}, where the only extra term the prevents us to directly apply \cite{CCV2011} is the drift. Since, in addition the drift is not div-free and non-linearly depends upon $\rho$ we will take extra care of keeping protocol of relation between  $w$ and $u$ after re-scalings.  

First, we start by noting that it suffices to work on time interval $[-3,0]$ and  prove uniform H\"older continuity on  $[-1,0]$. Second, in view of \eqref{e:eR} if necessary we can rescale the equation by a large $R>1$ and assume without loss of generality that $| e|_{L^\infty(\R \times [-3,0))} = \e_0 <1$, where $\e_0$ will be determined at a later stage and will in fact depend only on $\L,\l$.

The argument of \cite{CCV2011} uses rescaling of the form $\w = \frac{w_R }{C_1} + C_2$, where $R\geq 1$, and $0< C_1 \leq C_0=\max\{ 1, |w|_\infty\}$, and $w$ is the original solution, and $C_2$ is a constant which changes from step to step. Let us note that the new quantity $\w$ satisfies 
\begin{equation}\label{e:wR}
\begin{split}
\w_t + u_R \w_x &= \cL_{K_R} \w + f_{R,C_1},\\
|f_{R,C_1}|_\infty & \leq \frac{\e_0}{R C_1}.
\end{split}
\end{equation}
To keep control over the source we therefore impose the following assumption on all rescalings
\begin{equation}\label{e:RC}
RC_1 >1.
\end{equation}
We will now derive a truncated energy inequality for $\w$.

Let $\psi$ be a Lipschitz function on $\R$. We always assume that our Lipschitz functions have slopes bounded by a universal constant. Testing \eqref{e:wR} with $(\w-\psi)_+$ we obtain
\[
\begin{split}
\frac12 \ddt \int_\R (\w-\psi)_+^2 \dx  & - \frac12 \int (u_R)_x (\w-\psi)_+^2 \dx - \frac12 \int u_R \psi_x (\w-\psi)_+ \dx\\
& = - B_R(\w,(\w-\psi)_+) + \int f_{R,C_1} (\w-\psi)_+ \dx,
\end{split}
\]
where
\[
{B_R}(h,g) = \frac12 \int K_R(x,y) (h(y) - h(x)) (g(y) - g(x)) \dy \dx.
\]
Continuing we obtain
\[
(u_R)_x = e_R - \cL_{\phi_R} \rho_R = e_R -\cL_{K_R} w_R = e_R -C_1 \cL_{K_R} \w. 
\]
We also note that in view of our assumptions and the maximum principle we have a  scaling invariant bound
$|u_R \psi_x| \leq C$. So, as long as in addition $RC_1>1$, we obtain
\[
\frac12 \ddt \int_\R (\w-\psi)_+^2 \dx + B_R(\w,(\w-\psi)_+) \leq \frac{C_1}{2} B_R(\w,(\w-\psi)_+^2) + C (|(\w-\psi)_+|_1 + |(\w-\psi)_+|_2^2).
\]
Note that the $B$-term on the right hand side is cubic, while on the left hand side it is quadratic. This will help hide the cubic term with the help of  the following smallness assumption: 
\begin{equation}\label{e:e0}
|(\w-\psi)_+|_\infty \leq \frac{1}{2C_0}.
\end{equation}
Under this assumption we have
\[
B_R(\w,(\w-\psi)_+) -  \frac{C_1}{2} B_R(\w,(\w-\psi)_+^2)  = B_{R,\w}(\w,(\w-\psi)_+) ,
\]
where $B_{R,\w}$ is the bilinear form associated with the kernel
\[
K_{R,\w} (x,y) = K_R(x,y) \left[ 1 - \frac{C_1}{2}((\w-\psi)_+(x) + (\w-\psi)_+(y)) \right],
\]
which under \eqref{e:e0} satisfies similar bounds as the original kernel and is symmetric.  
Continuing with the energy inequality, we write $\w-\psi =(\w-\psi)_+ - (\w-\psi)_-$ and obtain
\begin{multline*}
B_{R,\w}(\w,(\w-\psi)_+) =B_{R,\w}((\w-\psi)_+,(\w-\psi)_+)  - B_{R,\w}((\w-\psi)_-,(\w-\psi)_+)\\
+ B_{R,\w}(\psi,(\w-\psi)_+).
\end{multline*}
The first is the main dissipative term for which we have a coercive bound
\[
B_{R,\w}((\w-\psi)_+,(\w-\psi)_+) \geq c_{\L,C_0} |(\w-\psi)_+|_{H^{1/2}}^2 - |(\w-\psi)_+|_2^2.
\]
For the second we have after cancellations
\[
- B_{R,\w}((\w-\psi)_-,(\w-\psi)_+) = 2 \int K_{R,\w}(x,y)(\w-\psi)_-(y)(\w-\psi)_+(z) \dy\dz: = P
\]
which is positive and can be dismissed for the application of the First DeGiorgi Lemma. Finally, as in \cite{CCV2011} we obtain
\[
|B_{R,\w}(\psi,(\w-\psi)_+)| \leq \frac12 B_R((\w-\psi)_+,(\w-\psi)_+) + |(\w-\psi)_+|_1 + |\{ \w-\psi >0\}|.
\]
We thus have proved the following energy bound under \eqref{e:e0} and for any rescaled solution with $RC_1>1$:
\[
 \ddt \int_\R (\w-\psi)_+^2 \dx +  |(\w-\psi)_+|_{H^{1/2}}^2 \lesssim  |(\w-\psi)_+|_2^2+ |(\w-\psi)_+|_1+|\{ \w-\psi >0\}|.
\]
We now recap the First DeGiorgi Lemma: there exists $\d>0$ and $\th \in (0,1)$ such that any solution $\w$ to \eqref{e:wR} satisfying
\[
\w(t,x) \leq 1 + ( |x|^{1/4} - 1)_+ \quad \text{on } \R \times [-2,0],
\]
and
\[
| \{ \w>0\} \cap (B_2 \times [-2,0])| \leq\d,
\]
must have a bound
\[
\w(t,x) \leq 1- \th.
\]
The proof proceeds as in \cite{CCV2011} with extra care given for \eqref{e:e0}. We consider Lipschitz function 
\[
\psi_{L_k}(x) = 1 - \th - \frac{\th}{2^k}  +   ( |x|^{1/2} - 1)_+.
\]
For $\th$ small enough it is clear that $(\w - \psi_{L_k})_+$ can be made as small as we wish for all $k\in \N$, in particular satisfying \eqref{e:e0}. With $\th$ fixed we can then apply the energy inequality for all terms $(\w - \psi_{L_k})_+$, and the argument of \cite{CCV2011} proceeds.

\medskip
\noindent  
\steps{4}{The second De Giorgi lemma}  In the Second De Giorgi Lemma the energy bound is used in a somewhat different way. Here the presence of the drift term requires extra attention as well as condition \eqref{e:e0}.  We recall the lemma first. For a $\l<1/3$ we define  $\psi_\l(x) = ((|x| - \frac{1}{\l^{4}})^{1/4}_+-1)_+$. Let also $F$ be non-increasing with $F =1$ on $B_1$ and $F = 0$ outside $B_2$. Define
\[
\phi_j = 1+ \psi_\l - \l^j F, \quad j=0,1,2.
\]
The lemma claims that there exist $\mu,\l,\g>0$ depending only on $\L$ such that if
\[
\w(t,x) <1+\psi_\l(x)  \text{ on } \R \times [-3,0],
\]
and 
\[
\begin{split}
|\{ \w <\phi_0\} \cap  B_1 \times (-3,-2) | & \geq \mu,\\
 |\{ \w >\phi_2\} \cap \R \times (-2,0) | & \geq \d,
\end{split}
\]
then necessarily
\[
|\{ \phi_0< \w <\phi_2\} \cap  \R\times (-3,0) |  \geq \g.
\]
So, if the function has a substantial weight under $\phi_0$ and later over $\phi_2$, then it must leave some appreciable weight in between. The proof goes by application of the energy inequality to
$(\w - \phi_1)_+$. However, $(\w - \phi_1)_+ \leq \l$ pointwise. Hence, to satisfy \eqref{e:e0} it is it sufficient to pick $\l <1/2C_0$, among further restrictions which come subsequently in the course of the proof. Thus, we have
\[
\begin{split}
\ddt \int_\R (\w-\phi_1)_+^2 \dx &+ B_{R,\w}((\w-\phi_1)_+,(\w-\phi_1)_+)  + P = - B_{R,\w}(\phi_1,(\w-\phi_1)_+) \\
&+ \int \left( \frac12 u_R (\phi_1)_x + f_{R,C_1} \right) (\w-\phi_1)_+ \dx .
\end{split}
\]
All the terms are exactly the same as in \cite{CCV2011} except the last one. To bound the last term we note that $ (\w-\phi_1)_+ $ is supported on $B_2$, where $\phi_1 = 1 + \l F$, hence $|(\phi_1)_x|_{L^\infty(B_2)} \leq C \l$. Furthermore, as noted above, $(\w-\phi_1)_+ \leq \l$.  Hence,
\[
 \left| \frac12 \int u_R (\phi_1)_x (\w-\phi_1)_+ \dx  \right| \leq C \l^2.
\]
As to the source term, we obtain the same bound provided $\e_0 <\l$.  The resulting bound repeats another estimate on the term $B_{R,\w}(\phi_1,(\w-\phi_1)_+)$, and hence, blends with the rest of Section 4 in \cite{CCV2011}.

The rest of the proof makes no further direct use of the energy inequality and thus proceeds ad verbatim.   The penultimate constant $\l$ ends up being dependent only on $\L$ and $C_0$ which are scaling invariant.

\medskip
\noindent  
\steps{5}{Diminishing oscillation and $C^\g$ regularity}   The first and second De Giorgi lemmas are now being used to prove that any solution with controlled tails on $[-3,0]\times \R$,
\[
-1 - \psi_{\e,\l} \leq w \leq 1+ \psi_{\e,\l},
\]
where 
\[
\psi_{\e,\l}(x) = \begin{cases}
0&, \quad \text{ if } |x| < \l^{-4} \\
[(|x| - \l^{-4})^\e - 1]_+&, \quad \text{ if } |x| \geq \l^{-4}
\end{cases}
\]
satisfies
\[
\sup_{[-1,0]\times B_1}  w - \inf_{[-1,0]\times B_1}  w  < 2 - \l^*,
\]
for some $\l^*>0$.  The proof goes by application of shift-amplitude rescalings of the form
\[
w_{k+1} = \frac{1}{\l^2}( w_k - (1-\l^2)) = \frac{1}{\l^{2k}} w   +  C_k.
\]
For our sourced equation this is the worst kind of rescaling since it doesn't come with a compensated space-time stretching.  However, in the argument the number of iterations is limited to $k_0 = |[-3,0]\times B_3|/\g$, and hence depends only on $\L$. We can pre-scale the equation in the beginning using $R'>0$ so large that $\e_0 = |f_{R'}|_\infty   <  \l^{2k_0} C_0 \leq \l^{2k_0}$. Hence, on each step of the iteration we have $|f_k|<\l$, fulfilling the requirement of Step 4 automatically.

The final iteration  consists on zooming and shifting process:
\[
\begin{split}
w_1 & = w / |w|_\infty ,\\
w_{k+1} & = \frac{1}{1 - \l^*/4} ((w_k)_R - \bar{w}_k),
\end{split}
\]
where $\bar{w_k}$ is the average over $[-1,0]\times B_1$.  On the first step we still have the bound $|f_1|<\l^{2k_0}$. Subsequently,  among other restrictions put on $R$ in \cite{CCV2011} we set in addition $R (1 - \l^*/4) >1$, which preserves the bound $|f|<\e_0$ for all steps.    This finishes the proof.

\section{Further extensions and discussion}

The class of topological models can be extended within our framework to include generalized topological diffusion of type 
\begin{equation}\label{eq:tau}
\phi(\bx,\by)=\frac{h(|\bx-\by|)}{|\bx-\by|^{n+\a-\tau}}\times \frac{1}{\dd_\rho^\tau(\bx,\by)}, \qquad \tau > 0.
\end{equation}
In, fact this class arises naturally in a hierarchy fashion in commutator estimates proved below in Appendix B. Our main flocking result of \thm{t:alignrate-a} extends to all $\tau \geq n$. In fact the most general statement which includes various stronger assumptions on density, and hence, better alignment rates, can be summarized in the following formulation.

\begin{theorem}\label{t:alignrate-b}  Let $(\rho,\bu)$ be a  global smooth solution of the topological model with kernel  \eqref{eq:tau}. Assume that  the density $\rho(t,\cdot)$ satisfies, for all $t>0$,
	\begin{equation}\label{e:ralg}
	\rho(t,\bx) \geq  \frac{c}{(1+t)^\b}, \quad 0\leq \b \leq \b_0:= \min\left\{1, \frac{n}{2n - \tau}\right\},
	\end{equation}
	and if $\tau > n+\a$, additionally
	\begin{equation}\label{e:rhoup}
	|\rho(t,\cdot)|_{\frac{\tau -n}{\a}} < C.
	\end{equation}
	Then the solution aligns with at least algebraic rate given by
	\begin{equation}\label{e:alignrate-b}
	|\bu(t) - \bu_\infty |_\infty  = \frac{o(1)}{t^\g} \ \text{ where } \ \g = \frac12\left( 1 - \frac{\b}{\b_0} \right).
	\end{equation}
\end{theorem}

One notable application of this more general result is for the 1D case when $e\equiv 0$. Indeed, in this case we have a uniform bound on the density from above and below, see \eqref{e:rhomaxpr} and hence the alignment rate improves to $\g =\frac12$.

More can be said about the density itself.  If $e=0$ the continuity equation acquires the structure of the $u$-equation. Along with the maximum principle come a possibility of applying \thm{t:alignrate-b} directly to the continuity equation. The energy law takes form
\[
\ddt |\rho|_2^2 = \int |\rho|^2 \cL_\phi \rho  \dx,
\]
which after symmetrizing becomes
\[
\int |\rho|^2 \cL_\phi \rho  \dx =- \frac12 \int \phi(x,y) (\rho(x)+\rho(y)) (\rho(x) - \rho(y))^2 \dx \dy.
\]
Since the pre-factor $(\rho(x)+\rho(y))$ is uniformly bounded from above and below this supplies the energy inequality analogous to \eqref{e:eneq}. We have all ingredients for a direct application of \thm{t:alignrate} (with $\b=0$) to the continuity equation and we conclude
\[
	|\rho(t) - \frac{1}{2\pi}M_0 |_\infty  = \frac{o(1)}{\sqrt{t}}.
\]

\begin{remark}({\bf About} $\tau=n$). We make another remark concerning the apparent threshold value of $\tau=n$. Clearly from \eqref{e:ralg}, if $\tau \geq n$, then $\rho \geq \frac{1}{1+t}$ is the weakest assumption under which the theorem holds, while for $\tau<n$ a more stringent bound on $\rho$ is required.  This can be explained by the fact the the density on the bottom of $\phi$ needs to compensate the density  on the top inside the diffusion term. Even more vividly the condition manifests itself after taking limit as $\a \to 2$. Such limits are standard in the  elliptic theory and we will not provide many details here. One can verify the following:
	\begin{equation}\label{e:classlimit}
	\lim_{\a \to 2} (2-\a) \cL_\phi f(x) = \n \cdot \left( \rho^{-\frac{\tau}{n}} \n f \right)  : = D(f) .
	\end{equation}
	The commutator which would appear in the corresponding limit model reads
	\begin{equation}\label{e:commD}
	D(\rho \bu) - \bu D(\rho) =  \frac{1}{\rho^{\g - 1}} \Delta \bu  + \frac{2-\g }{\g}  \n \bu \n \rho, \qquad \g = \frac{\tau}{n}.
	\end{equation}
	We can see  that $\tau = n$ is the threshold that determines whether   the density appears on the top or the bottom  in front of the leading order term. For $\tau \geq n$ it amplifies dissipation in thinner regions as intended in the topological model.  
\end{remark}

Concerning regularity of solutions in 1D one can obtain an extension into the range $\a<1$. In fact the continuation criterion of \prop{p:global}  extends directly as is, in fact in several technical places even easier due to lower singularity order of the diffusion.  The H\"older regularization result can be obtained by an adaptation of Silvestre result \cite{S2012} for forced drift-diffusion equations. The result assume pure fractional Laplacian as a diffusion, but as noted by the author, applies to more general kernels, even in $z$:  $K(x,z,t) = K(x,-z,t)$. Another necessary condition to apply \cite{S2012} is regularity of the drift $u \in C^{1-\a}$.   For this we use the representation \eqref{e:Sch1}:  $u =  \p_x^{-1} e - \cF \rho$. Since $\p_x^{-1} e\in W^{1,\infty}$, it remains to check that $\cF \rho \in C^{1-\a}$. The verification again goes via an optimization over cut-off scale argument.  Then, omitting constants, 
\[
\begin{split}
\cF \rho(x+\xi) - \cF \rho(x) & = \int_{|z|\geq |\xi|} [ \ln \dd_\rho(x+\xi+z,x+\xi) - \ln \dd_\rho(x+z,x)] \frac{\sgn(z)  h(z)}{|z|^{\a}} \dz\\
& + \int_{|z|\leq |\xi|}  [ \ln \dd_\rho(x+\xi+z,x+\xi) - \ln \dd_\rho(x+z,x)] \frac{\sgn(z)  h(z)}{|z|^{\a}} \dz.
\end{split}
\]
In the first, we use Taylor formula \eqref{e:taylorlog} which yields a bound by $|\xi|/ |z|^{1+\a}$, with a uniform constant depending only on \eqref{e:cC}. This results in  $|\xi|^{1-\a}$, as needed. 
In the latter integral we simply observe
\[
\ln \dd_\rho(x+\xi+z,x+\xi) - \ln \dd_\rho(x+z,x) = \ln \frac{ \dd_\rho(x+\xi+z,x+\xi)}{\dd_\rho(x+z,x) }  \sim 1.
\]
So, the order of singularity is $|z|^{-\a}$, which implies bound by $|\xi|^{1-\a}$, as needed. 

A restriction comes in the range $0<\a<1$, or $\a<\tau$ for more general models, in establishing upper bound on the density. While the lower bound in \eqref{e:rhouplow} always holds, the extension to upper bound reads as follows.

\begin{lemma}\label{l:above2} Let $(\rho,u)$ be a smooth solution of the $(\tau,\a)$-model, subject to initial density $\rho_0$ away from vacuum, $0<c < \rho_0 < C <\infty$. Assume that either (i) $\tau \leq \a$, or else if $\tau > \a$, that (ii) the initial condition satisfies 
	\[
	\cM_0^{\tau} |q_0|_\infty < \frac{R_0^{\tau-\a}}{\tau-\a}, \quad q_0=\frac{e_0}{\rho_0}.
	\] 
	Then the density is uniformly bounded in time:
	\begin{equation}\label{e:above2}
	\rho(t,x)<C(\cM_0,|q_0|_\infty,\phi), \qquad  x\in\T, \ t \geq 0.
	\end{equation}
\end{lemma}
So, for $\tau > \a$ we need an extra smallness assumption to acheive the same result. This condition is scaling invariant, see Step 2 in the proof of \prop{p:degiorgi}.  We record the generalization in the following theorem.

\begin{theorem}\label{t:mainI2}
	Consider the one-dimensional system \eqref{e:main} on $\T$ with short-range topological kernel \eqref{eq:tau} and  singularity of order $0<\a<1$. Any non-vacuous initial data $(\rho_0,u_0)\in H^{s+\a}\times H^{s+1}$, $s\geq 3$, satisfying the conditions of \lem{l:above2} admits 
	a unique global in time solution,  $(\rho,u)$, in the class  
	\[
	\begin{split}
	\rho & \in C_w(\R^+; H^{s+\a})\cap L^2_\loc(R^+; H^{s+1+\frac{\a}{2}}), \\
	u & \in C_w(\R^+; H^{s+1})\cap L^2_\loc(R^+; H^{s+1+\frac{\a}{2}}).
	\end{split}
	\]
\end{theorem}

\section{Appendix A. Pointwise evaluation of topological alignment}\label{appA}
Here we collect necessary formalities related to pointwise evaluations of the operator $\cL_\phi$ and the commutator $\cC_\phi$. The statements come with corresponding estimates we used throughout the text. In fact, we consider the more general class of topological kernels that we already mentioned in the previous section:
\begin{equation}\label{e:taukernel}
	\phi(\bx + \bz,\bx) = \frac{h(|\bz|)}{  |\bz|^{n+\a-\tau}}\times \frac{1}{\dd^{\tau}_\rho(\bx+\bz,\bx)}, \quad \tau >0.
\end{equation}

\begin{lemma} \label{l:topoL}
	For any $0<\a<2$ and $f\in C^2$ one has the natural pointwise representation formula
	\begin{equation}\label{e:L=Ltopo}
	\aL_\phi f(\bx)=p.v.\int_{\T^n}  (f(\bx+\bz)-f(\bx))\phi(\bx+\bz,\bx)\dbz.
	\end{equation}
	Moreover, for any $r>0$,
	\begin{equation}\label{e:repL}
	\aL_\phi f(\bx)=\int_{\T^n}  (f(\bx+\bz)-f(\bx) -\bz \cdot\n f(\bx)  \one_{|\bz|<r}(\bz) )\phi(\bx+\bz,\bx)\dbz + b_r(\bx)\cdot \n f(\bx),
	\end{equation}
	where 
		\[
	b_r (\bx) = p.v. \int_{|\bz|<r}  \bz  \phi(\bx+\bz,\bx) \dbz,
	\]
	satisfies
	\begin{equation}\label{e:bN}
	|b_r|_\infty  \leq C |\n \rho|_\infty r^{2-\a}.
	\end{equation}
\end{lemma}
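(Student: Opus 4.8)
The plan is to separate the ranges $0<\a<1$ and $1\le\a<2$, the latter carrying all the work. For $0<\a<1$ and $f\in C^1$ the integrand in \eqref{e:L=Ltopo} is dominated by $|\n f|_\infty|\bz|\cdot|\bz|^{-(n+\a)}=|\n f|_\infty|\bz|^{1-n-\a}$, hence locally integrable; the integral converges absolutely, defines a continuous function of $\bx$, and agrees with the distribution $\aL_\phi f$ by the ``unwinding'' of \eqref{e:Ldef} already indicated in Section~\ref{s:topo}. For $1\le\a<2$ the pointwise meaning must be recovered by symmetrising in $\bz\mapsto-\bz$, and the whole argument rests on the following \textbf{asymmetry estimate} for the kernel: if $\rho$ obeys \eqref{e:rbounds} with $|\n\rho|_\infty<\infty$, then for $\bx\in\T^n$ and $|\bz|$ small,
\begin{equation}\label{e:asym}
|\phi(\bx+\bz,\bx)-\phi(\bx-\bz,\bx)| \lesssim |\n\rho|_\infty\,|\bz|^{-(n+\a-1)}.
\end{equation}
I expect proving \eqref{e:asym} to be the main technical point.

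For \eqref{e:asym}, note first that the Euclidean factor $h(|\bz|)|\bz|^{-(n+\a-\tau)}$ is even in $\bz$, so the difference comes entirely from the topological factor. From the construction of $\O$ in Section~\ref{sec:region}, $\O(\bx-\bz,\bx)$ is the image of $\O(\bx+\bz,\bx)$ under the reflection $\by\mapsto 2\bx-\by$ (since $\O$ depends only on the unordered segment), so the two domains are congruent, with $|\O(\bx+\bz,\bx)|\sim|\bz|^n$, $\diam\O(\bx+\bz,\bx)\lesssim|\bz|$ and hence $\dd_\rho(\bx+\bz,\bx)\sim|\bz|$ by \eqref{e:rbounds}. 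Writing $M_\pm:=\dd_\rho^n(\bx\pm\bz,\bx)=\int_{\O(\bx\pm\bz,\bx)}\rho$, the same change of variables gives $M_+-M_-=\int_{\O(\bx+\bz,\bx)}(\rho(\by)-\rho(2\bx-\by))\,\dby$, so $|M_+-M_-|\le 2|\n\rho|_\infty\diam\O(\bx+\bz,\bx)\,|\O(\bx+\bz,\bx)|\lesssim|\n\rho|_\infty|\bz|^{n+1}$; since $M_\pm\sim|\bz|^n$, the mean value theorem for $t\mapsto t^{-\tau/n}$ yields $|\dd_\rho^{-\tau}(\bx+\bz,\bx)-\dd_\rho^{-\tau}(\bx-\bz,\bx)|\lesssim|\bz|^{-\tau-n}|M_+-M_-|\lesssim|\n\rho|_\infty|\bz|^{1-\tau}$, and multiplying by the even Euclidean factor $\lesssim|\bz|^{-(n+\a-\tau)}$ gives \eqref{e:asym} (identically for the general $\tau$-kernel \eqref{e:taukernel}). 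With \eqref{e:asym} in hand, \textbf{Step 1} is the existence of the principal value: for $f\in C^2$ split
\[
(f(\bx+\bz)-f(\bx))\phi(\bx+\bz,\bx)=\tfrac12\big(f(\bx+\bz)+f(\bx-\bz)-2f(\bx)\big)\phi(\bx+\bz,\bx)+\tfrac12\big(f(\bx-\bz)-f(\bx)\big)\big(\phi(\bx+\bz,\bx)-\phi(\bx-\bz,\bx)\big),
\]
whose two terms are $O(|\bz|^{2-n-\a})$ by a second-order Taylor estimate and by \eqref{e:asym} respectively; since $\a<2$ this is locally integrable, so the symmetric cutoff integral converges, the limit $L_\phi f(\bx):=\mathrm{p.v.}\int(f(\bx+\bz)-f(\bx))\phi(\bx+\bz,\bx)\,\dbz$ exists, is continuous in $\bx$, and the truncated integrals converge to it uniformly in $\bx$.

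\textbf{Step 2} identifies $L_\phi f$ with the distribution $\aL_\phi f$ of \eqref{e:Ldef}. Using the symmetry $\phi(\bx,\by)=\phi(\by,\bx)$ — not evenness in $\bz$ — the substitution $(\bx,\bz)\mapsto(\bx+\bz,-\bz)$ gives, for any test $g$ and fixed $\e>0$,
\[
\int_{\T^n}\int_{|\bz|>\e}(f(\bx+\bz)-f(\bx))\phi(\bx+\bz,\bx)g(\bx)\,\dbz\,\dbx=-\tfrac12\int_{\T^n}\int_{|\bz|>\e}(f(\bx+\bz)-f(\bx))(g(\bx+\bz)-g(\bx))\phi(\bx+\bz,\bx)\,\dbz\,\dbx.
\]
Letting $\e\to0$, the left side tends to $\int_{\T^n}L_\phi f\cdot g\,\dbx$ by the uniform convergence from Step 1, and the right side tends to $\lan\aL_\phi f,g\ran$ by dominated convergence (integrand $O(|\bz|^{2-n-\a})$). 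Hence $\aL_\phi f=L_\phi f$ as distributions; being continuous, $L_\phi f$ realises the pointwise formula \eqref{e:L=Ltopo}. (Alternatively one pairs against a Friedrichs mollifier $g_\e$ concentrating at a point, as in Section~\ref{s:topo}, using the a priori regularity $\aL_\phi f\in C^{k-\e}$ proved there.)

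Finally, \textbf{Step 3} proves \eqref{e:repL}. Fix $r>0$, split the principal value at radius $r$, and inside $|\bz|<r$ insert and subtract $\bz\cdot\n f(\bx)$:
\[
\aL_\phi f(\bx)=\int_{\T^n}\big(f(\bx+\bz)-f(\bx)-\bz\cdot\n f(\bx)\one_{|\bz|<r}(\bz)\big)\phi(\bx+\bz,\bx)\,\dbz+b_r(\bx)\cdot\n f(\bx),\qquad b_r(\bx):=\mathrm{p.v.}\!\int_{|\bz|<r}\bz\,\phi(\bx+\bz,\bx)\,\dbz.
\]
The integral is now absolutely convergent: near $\bz=0$ the Taylor remainder makes it $O(|\bz|^{2-n-\a})$, and on $|\bz|\ge r$ the integrand is bounded and compactly supported in $|\bz|<2R_0$. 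The vector $b_r(\bx)$ is well defined since $|\bz|<r$ is a symmetric ball, so $b_r(\bx)=\tfrac12\int_{|\bz|<r}\bz\,(\phi(\bx+\bz,\bx)-\phi(\bx-\bz,\bx))\,\dbz$, whose integrand is $\lesssim|\n\rho|_\infty|\bz|^{2-n-\a}$ by \eqref{e:asym}; integrating in $\bz$,
\[
|b_r|_\infty\lesssim|\n\rho|_\infty\int_0^r s^{1-\a}\,\ds\lesssim|\n\rho|_\infty r^{2-\a},
\]
which is \eqref{e:bN}. This completes the scheme; every step runs verbatim for the $\tau$-kernel \eqref{e:taukernel}, the diagonal singularity being of order $n+\a$ in all cases.
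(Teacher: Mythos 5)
Your proposal is correct, and its quantitative core is the same as the paper's: the bound $|\dd^{n}_\rho(\bx+\bz,\bx)-\dd^{n}_\rho(\bx-\bz,\bx)|\lesssim |\n\rho|_\infty|\bz|^{n+1}$ obtained by reflecting $\O(\bx+\bz,\bx)$ through $\bx$, which after the mean-value step gives the $r^{2-\a}$ drift bound \eqref{e:bN}. Where you differ is in organization and in the identification step. The paper packages the estimate as a bound on the operator $B_r\zeta(\bx)=p.v.\int_{|\bz|<r}\zeta(\bx+\bz)\,\bz\,\phi(\bx+\bz,\bx)\dbz$ for general $\zeta$ (hence the extra $|\n\zeta|_\infty$ term in \eqref{e:BN}), because it identifies the distribution \eqref{e:Ldef} with the pointwise formula by pairing against Friedrichs mollifiers $g_\e\to\delta_{\bx_0}$: it splits the symmetric form into a Taylor-corrected part, which can be "unwound" since it is no longer singular, plus drift terms $b_r\cdot\n f$ and $B_r\n f$, and passes to the limit using continuity of $B_r$. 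You instead state the estimate as a kernel asymmetry bound $|\phi(\bx+\bz,\bx)-\phi(\bx-\bz,\bx)|\lesssim|\n\rho|_\infty|\bz|^{1-n-\a}$, use it to get uniform convergence of the symmetric truncations of the p.v., and identify with \eqref{e:Ldef} by the change of variables $(\bx,\bz)\mapsto(\bx+\bz,-\bz)$ at the $\e$-truncated level against an arbitrary test function, followed by dominated convergence; this avoids the mollifier limit and the operator $B_r$ on non-constant $\zeta$ altogether, while the paper's $B_r\zeta$ bound has the side benefit of being reused elsewhere in the regularity argument. Two small points: your displayed "splitting" in Step 1 is not a pointwise identity but only holds after averaging over $\pm\bz$ under a symmetric truncation, and the second term there should carry a minus sign — neither affects anything since only absolute values enter; and, like the paper, your argument implicitly assumes $f$ of class $C^2$ (or $C^{1,\beta}$ with $\beta>\a-1$) and $\rho$ Lipschitz with the bounds \eqref{e:rbounds}, which is the setting in which the lemma is applied.
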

\begin{proof}
	At the core of the proof is a bound on the operator given by 
	\[
	B_r\zeta (\bx) = p.v. \int_{|\bz|<r} \zeta(\bx+\bz)\, \bz  \phi(\bx+\bz,\bx) \dbz.
	\]
	Clearly,  $B_r 1 = b_r$. We address it more generally as was used in preceding sections. By symmetrization,
	\[
	\begin{split}
	B_r\zeta (\bx) &= \frac12  \int_{|\bz|<r} \frac{\dd^\tau_\rho(\bx-\bz,\bx) - \dd^\tau_\rho(\bx+\bz,\bx)}{\dd^\tau_\rho(\bx+\bz,\bx)\dd^\tau_\rho(\bx-\bz,\bx) |\bz|^{n+\a-\tau}} \zeta(\bx+\bz) \bz h(\bz) \dbz\\
	& + \frac12  \int_{|\bz|<r} \frac{\zeta(\bx+\bz) - \zeta(\bx-\bz)}{\dd_\rho^\tau(\bx-\bz,\bx) |\bz|^{n+\a-\tau}}\bz h(\bz) \dbz =: I(\bx)+J(\bx).
	\end{split}
	\]
	In what follows the constant $C$ will change line to line and may depend on the underlying bounds on the density at hand, \eqref{e:rbounds}. As for $J$, we directly obtain
	\[
	|J(\bx) | \leq C |\n \zeta |_\infty r^{2-\a}.
	\]
	For $I(\bx)$ we first observe 
\[
	\begin{split}
		\dd^\tau_\rho(\bx+\bz,\bx) - \dd^\tau_\rho(\bx-\bz,\bx) & = \frac{\tau}{n} [ \dd^{n}_\rho(\bx+\bz,\bx) - \dd^{n}_\rho(\bx-\bz,\bx)]\times \\
		&\times  \int_0^1 \left[ \th \dd^{n}_\rho(\bx+\bz,\bx) +(1-\th)\dd^{n}_\rho(\bx-\bz,\bx) \right]^{\frac{\tau}{n} - 1} \dth.
	\end{split}
\]
	Note that
	\[
	| \dd^{n}_\rho(\bx+\bz,\bx) - \dd^{n}_\rho(\bx-\bz,\bx) | = \left| \int_{\O(\bz,0)} (\rho(\bx+\bw) - \rho(\bx-\bw))\dbw \right| \leq |\n \rho|_\infty |\bz|^{n+1},
	\]
	and clearly,
	\[
	\int_0^1 \left[ \th \dd_\rho(\bx+\bz,\bx) +(1-\th)\dd_\rho(\bx-\bz,\bx) \right]^{\frac{\tau}{n} - 1} \dth \leq C|\bz|^{\tau - n}.
	\]
	Consequently,
	\[
	|I(\bx)| \leq  C |\n \rho|_\infty|\zeta|_\infty \int_{|\bz|<r} \frac{1}{|\bz|^{n+\a -2}}\dbz \sim |\n \rho|_\infty|\zeta|_\infty r^{2-\a}.
	\]
	
	In conclusion we obtain the bound
	\begin{equation}\label{e:BN}
	| B_r\zeta |_\infty  \leq C \left(|\n \rho|_\infty |\zeta|_\infty+ |\n \zeta|_\infty   \right) r^{2-\a}.
	\end{equation}
	Note that the bounds above provide a common integrable dominant for the integrands parametrized by $\bx$. So, in addition $B_r\zeta \in C(\T^n)$.
	
	The bound \eqref{e:bN} now follows directly from \eqref{e:BN}, and we also have $b_r\in C(\T^n)$.
	With the knowledge that the drift is finite, clearly, the right hand sides of \eqref{e:L=Ltopo} and \eqref{e:repL} coincide. Denote them $L_\phi f(\bx)$. We now have a task to pass to the limit
	\[
	\lan    \cL_\phi f , g_\e \ran \ra L_\phi f(\bx_0),
	\]
	for \emph{every} $\bx_0\in \T^n$. Splitting the integral we obtain
	\[
	\begin{split}
	\lan    \cL_\phi f , g_\e \ran & = \frac12 \int_{\T^n \times \T^n} \phi(\bx,\by) (f(\by)-f(\bx) - \n f(\bx)(\by-\bx) \one_{|\bx-\by|<r})(g_\e(\bx) - g_\e(\by))\dby \dbx \\
	&+  \frac12 \int_{\T^n \times \T^n} \phi(\bx,\by) \n f(\bx)(\by-\bx) \one_{|\bx-\by|<r})(g_\e(\bx) - g_\e(\by))\dby \dbx = I + J.
	\end{split}
	\]
	Note that $J= \frac12 \lan b_r \cdot \n f ,  g_\e \ran + \frac12 \lan B_r \n f , g_\e \ran$. By continuity of $B_r$ proved above,
	\begin{equation}\label{e:Jcomp}
	J \to  \frac12  b_r(\bx_0)\cdot \n f(\bx_0) + \frac12 (B_r \n f)(\bx_0).
	\end{equation}
	
	As for  $I$ we can unwind the symmetrization since each part of the integral is not singular any more:
	\[
	\begin{split}
	I & = \frac12 \int_{\T^n \times \T^n} \phi(\bx,\by) (f(\by)-f(\bx) - \n f(\bx)(\by-\bx) \one_{|\bx-\by|<r})g_\e(\bx) \dby \dbx \\
	& - \frac12 \int_{\T^n \times \T^n} \phi(\bx,\by) (f(\by)-f(\bx) - \n f(\bx)(\by-\bx) \one_{|\bx-\by|<r})g_\e(\by) \dby \dbx.
	\end{split}
	\]
	Passing to the limit in each integral we obtain
	\[
	\begin{split}
	I & \ra \frac12 \int_{\T^n} (f(\by) - f(\bx_0) - \n f(\bx_0)(\by-\bx_0)) \phi(\bx_0,\by) \dby  \\
	& -  \frac12 \int_{\T^n} (f(\bx_0) - f(\bx) - \n f(\bx)(\bx_0-\bx))\phi(\bx,\bx_0)\dbx\\
	& =  \int_{\T^n} \phi(\bx_0,\by) (f(\by)-f(\bx_0) - \frac12 (\n f(\bx_0) + \n f(\by))(\by-\bx_0) \one_{|\bx_0-\by|<r})\dby\\
	&= \int_{\T^n} \phi(\bx_0,\by) (f(\by)-f(\bx_0) - \n f(\bx_0) (\by-\bx_0) \one_{|\bx_0-\by|<r})\dby \\
	&+  \frac12 \int_{\T^n} \phi(\bx_0,\by) (\n f(\bx_0) - \n f(\by))(\by-\bx_0) \one_{|\bx_0-\by|<r}\dby \\
	&= L_\phi f(\bx_0) - \frac12  b_r(\bx_0)\cdot \n f(\bx_0) - \frac12 (B_r \n f)(\bx_0).
	\end{split}
	\]
	Thus, combining with \eqref{e:Jcomp} we obtain $	I+ J \to  L_\phi f(\bx_0)$ which completes the proof. 
\end{proof}

As a corollary we obtain analogous representation formula for the commutator.

\begin{lemma} \label{l:topoC}
	For any $0<\a<2$ one has the following pointwise representation
	\begin{equation}\label{e:C=Ctopo}
	\cC_\phi(f,\zeta)(\bx)=p.v.\int_{\T^n} \phi(\bx+\bz,\bx) \zeta(\bx+\bz) (f(\bx+\bz)-f(\bx))\dbz.
	\end{equation}
	Moreover, the following representation holds for any $r>0$:
	\begin{equation}\label{e:repC}
	\begin{split}
	\cC_\phi(f,\zeta)(\bx)&=\int_{\T^n} \phi(\bx+\bz,\bx)\zeta(\bx+\bz) (f(\bx+\bz)-f(\bx) - \bz\cdot \n f(\bx) \one_{|\bz|<r})\dbz \\
	&+ (\zeta(\bx) b_r(\bx) + a_r(\bx)) \cdot \n f(\bx),
	\end{split}
	\end{equation}
	where $b_r$ is defined as before, and 
	\begin{equation}\label{e:ar}
	|a_r|_\infty \leq C |\n \zeta|_\infty r^{2-\a}.
	\end{equation}
\end{lemma}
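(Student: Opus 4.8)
The plan is to read off both representations directly from \lem{l:topoL}, using the elementary commutator identity $\cC_\phi(f,\zeta) = \aL_\phi(\zeta f) - f\,\aL_\phi(\zeta)$. This is an identity of genuine functions, not merely of distributions, precisely because \lem{l:topoL} already realizes $\aL_\phi(\zeta f)$ and $\aL_\phi(\zeta)$ as continuous functions when $f,\zeta,\rho$ are smooth enough; so the argument is pure bookkeeping once that lemma is in hand.

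First I would establish the principal-value formula \eqref{e:C=Ctopo}. Apply \eqref{e:L=Ltopo} to $\zeta f$ and to $\zeta$ and subtract; since the limit of a difference of convergent principal values is the difference of the limits, the integrand collapses through the identity
\[
\zeta f(\bx+\bz) - \zeta f(\bx) - f(\bx)\big(\zeta(\bx+\bz)-\zeta(\bx)\big) = \zeta(\bx+\bz)\big(f(\bx+\bz)-f(\bx)\big),
\]
which is exactly the integrand appearing in \eqref{e:C=Ctopo}. (For $0<\a<1$ the factor $|f(\bx+\bz)-f(\bx)|\lesssim|\bz|$ absorbs one power of the singularity, so the integral is absolutely convergent and the $p.v.$ is cosmetic; for $1\le\a<2$ it is genuine and is inherited from \lem{l:topoL}.)

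Next I would run the same subtraction on the truncated representation \eqref{e:repL}, applied to $\zeta f$ and to $\zeta$ with the same cutoff radius $r$. The first-order corrections combine by the Leibniz rule, $\bz\cdot\n(\zeta f)(\bx) - f(\bx)\,\bz\cdot\n\zeta(\bx) = \zeta(\bx)\,\bz\cdot\n f(\bx)$, and the drift terms combine to $\zeta(\bx)\,b_r(\bx)\cdot\n f(\bx)$, yielding
\begin{equation*}
\begin{split}
\cC_\phi(f,\zeta)(\bx) &= \int_{\T^n}\phi(\bx+\bz,\bx)\Big(\zeta(\bx+\bz)(f(\bx+\bz)-f(\bx)) - \zeta(\bx)\,\bz\cdot\n f(\bx)\,\one_{|\bz|<r}\Big)\dbz \\
&\quad + \zeta(\bx)\,b_r(\bx)\cdot\n f(\bx).
\end{split}
\end{equation*}
This differs from the desired form \eqref{e:repC} only in that the subtracted term carries $\zeta(\bx)$ rather than $\zeta(\bx+\bz)$; writing $\zeta(\bx) = \zeta(\bx+\bz) - (\zeta(\bx+\bz)-\zeta(\bx))$ moves the discrepancy into a new drift term $a_r(\bx)\cdot\n f(\bx)$ with $a_r(\bx) := \int_{|\bz|<r}\phi(\bx+\bz,\bx)\big(\zeta(\bx+\bz)-\zeta(\bx)\big)\,\bz\,\dbz$. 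I expect this transfer --- arranging $a_r$ in the precise shape that still admits the clean bound \eqref{e:ar} --- to be the only step requiring a moment's care.

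Finally it remains to estimate $a_r$, and here there is no real obstacle: since $\dd_\rho(\bx+\bz,\bx)\gtrsim|\bz|$ gives $\phi(\bx+\bz,\bx)\lesssim|\bz|^{-(n+\a)}$ on the support of $h$, the integrand defining $a_r$ is $O\!\big(|\n\zeta|_\infty|\bz|^{2-n-\a}\big)$, hence absolutely integrable for $\a<2$, and integrating in polar coordinates gives $|a_r|_\infty \le C|\n\zeta|_\infty r^{2-\a}$ at once --- with $C$ depending only on the density bounds \eqref{e:rbounds}, and, in contrast with the bound \eqref{e:bN} for $b_r$, without any need to symmetrize. The only standing hypothesis to monitor is that $f,\zeta,\rho$ be regular enough for \lem{l:topoL} to apply to both $\zeta f$ and $\zeta$, which is routine.
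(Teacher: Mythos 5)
Your argument is correct and is essentially the paper's proof: the paper likewise obtains Lemma~\ref{l:topoC} by a direct application of Lemma~\ref{l:topoL}, arriving at exactly your residual drift $a_r(\bx)=\int_{|\bz|<r}\phi(\bx+\bz,\bx)(\zeta(\bx+\bz)-\zeta(\bx))\,\bz\,\dbz$ and the same one-line estimate \eqref{e:ar} from $|\zeta(\bx+\bz)-\zeta(\bx)|\le|\n\zeta|_\infty|\bz|$ and $\phi\lesssim|\bz|^{-(n+\a)}$. Your bookkeeping (collapsing the integrands, combining the drifts to $\zeta(\bx)b_r(\bx)\cdot\n f(\bx)$, and shifting $\zeta(\bx)$ to $\zeta(\bx+\bz)$ at the cost of $a_r$) just makes explicit what the paper leaves implicit.
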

The proof goes by a direct application of \lem{l:topoL}. For the residual drift we obtain
\[
a_r(\bx) = \int_{|\bz|<r} \phi(\bx+\bz,\bx)(\zeta(\bx+\bz) - \zeta(\bx)) \bz \dbz.
\]
The bound \eqref{e:ar} follows at once.

\section{Appendix B. Commutator estimates}\label{appB}

We will forcus on 1D case with $\a \geq 1$ and establish necessary commutator estimates used in the proof of \thm{t:mainI}.  The estimates will be obtained in pointwise evaluation style which makes them suitable for applications in both $L^\infty$-based settings and $L^2$ setting. For this reason we pay special attention to dependence on the top order terms.  First, we obtain a basic estimate on pointwise evaluation of the topological diffusion operator, which follows  from representation formula \eqref{e:repL}.
\begin{lemma}\label{l:Linfty}  For every smooth function $f$ and $0\leq \g <1$ one has
	\begin{equation} \label{e:Linfty}
	| \cL_\phi f (x) | \lesssim  r^{1 - \frac{\a}{2}}\sqrt{\dD_\a[ f'] (x)} + r^{\g - \a} \|f\|_{C^\g} + r^{2-\a} |f'(x)| |\rho'|_\infty,
	\end{equation}
for all $r<R_0$ and $x\in \T$. 
\end{lemma}
\begin{proof}
 We use decomposition \eqref{e:repL} with further breakdown of the integral:
\[
\begin{split}
\cL_\phi f(x) & = \int_{|z|<r} (f(x+z) - f(x) - z f'(x)) \phi \dz +  f'(x)b_r(x) \\
&+ \int_{|z|>r} (f(x+z) - f(x) )\ \phi \dz = I+f'(x)b_r(x)+J.
\end{split}
\]
Using that 
\begin{equation}\label{e:diffD}
|f(x+z) - f(x) - z f'(x)| = \left| \int_0^z (f'(x+w) - f'(x)) \dw \right| \lesssim \sqrt{\dD_\a f'(x)} |z|^{1+ \frac{\a}{2}},
\end{equation}
we obtain
\[
|I|\lesssim r^{1-\frac{\a}{2}} \sqrt{\dD_\a f'(x)}.
\]
Next, due to \eqref{e:bN}, 
\[
|b_r(x)|\lesssim |\rho'|_\infty r^{2-\a}.
\]
And as to $J$, we use the H\"older continuity,
\[
|J| \lesssim r^{\g - \a} \|f\|_{C^\g}.
\]
Putting the estimates together yields \eqref{e:Linfty}.
\end{proof}

\begin{lemma}\label{l:1stcomm}  For every smooth function $f$ and $0\leq \g <1$ one has 
	\begin{multline} \label{e:1stcomm}
	| \cL_{\phi'} f (x) | \lesssim  r^{1 - \frac{\a}{2}} \left( |\rho'|_\infty \sqrt{\dD_\a[ f'] (x)}+  |f'|_\infty \sqrt{\dD_\a [\rho'](x)}\right)+ r^{\g - \a} \|f\|_{C^\g}  |\rho'|_\infty \\
	+ r^{2-\a} |f'|_\infty |\rho'|^2_\infty,
	\end{multline}
for all $r<R_0$ and $x\in \T$. 
\end{lemma}
\begin{proof}
Using the explicit formula for the kernel
\[
\phi' = \frac{h(z)}{|z|^{\a} \dd_\rho^{2}(x,x+z)} \int_{[0,z]} \rho'(x+ \xi) \dxi,
\]
we obtain
\[
\begin{split}
\cL_{\phi'} f (x) & = \int_\T\frac{h(z) \d_z f(x) }{|z|^{\a} \dd_\rho^{2}(x,x+z)} \int_{[0,z]} [ \rho'(x+ \xi) - \rho'(x) ] \dxi     \dz\\
&+  \rho'(x) \int_\T\frac{h(z)}{|z|^{\a - 1} \dd_\rho^{2}(x,x+z)}   \d_z f(x)   \dz = J_1 + J_2.
\end{split}
\]
Note that $J_2$ is precisely one of the topological-type operator with $\tau = 2$. So, estimate \eqref{e:Linfty} applies:
\[
|J_2| \lesssim  r^{1 - \frac{\a}{2}}   |\rho'|_\infty  \sqrt{\dD_\a[ f'] (x)} + r^{\g - \a} \|f\|_{C^\g}   |\rho'|_\infty + r^{2-\a} |f'|_\infty   |\rho'|^2_\infty.
\]
As to $J_1$, we estimate as usual
\[
\left| \int_{[0,z]} [ \rho'(x+ \xi) - \rho'(x) ] \dxi  \right| \lesssim \sqrt{\dD_\a [\rho'](x)} |z|^{1+ \frac{\a}{2}}.
\]
So, using this together with the full derivative of $f$  in the short range $\{|z|<r\}$, and H\"older continuity of $f$ in the long range $\{|z| >r\}$ we obtain
\[
|J_1| \lesssim r^{1 - \frac{\a}{2}} |f'|_\infty \sqrt{\dD_\a [\rho'](x)} +  r^{\g - \a} \|f\|_{C^\g} |\rho'|_\infty .
\]

\end{proof}

The statement of \lem{l:1stcomm} can be viewed as the commutator estimate of first order since
\[
\cL_{\phi'} f = (\cL_\phi f)' - \cL_{\phi} f'.
\]
We will need to establish similar estimates for higher order commutators, although without the use of H\"older continuity of $f$.
\begin{lemma}\label{l:kcomm} Let $f,\rho$ be smooth functions and $1\leq \a <2$. Then for any $x\in \T$ the following inequalities hold: for $k \geq 3$
\begin{equation}\label{e:k3}
			\begin{split}
				| (\cL_\phi f)^{(k)} (x) -  \cL_\phi (f^{(k)}) (x) |  &\lesssim \sqrt{\dD_\a[ f^{(k)}] (x)}  + \sqrt{\dD_\a[ \rho^{(k)}] (x)} \\
			& +  \sqrt{\dD_\a[ f^{(k-1)}] (x)} + \sqrt{\dD_\a[ \rho^{(k-1)}] (x)} \\ 
			& + |\rho^{(k)}(x) |  + |f^{(k)}(x)|  +  1,
			\end{split}
\end{equation}
and for any $\e>0$, and $k=2$
\begin{equation}\label{e:k2}
	\begin{split}
| (\cL_\phi f)'' (x) -  \cL_\phi (f'') (x) |  &\lesssim \sqrt{\dD_\a[ f''] (x)}  + \sqrt{\dD_\a[ \rho''] (x)} \\
& + \e |f''|_\infty |\rho''|_\infty+  |f''|_\infty  +  c_\e |\rho''|_\infty +c_\e.
\end{split}
\end{equation}
with $\lesssim$ meaning  up to a contant factor 
\[
C = C(\rhomin,\rhomax,|\rho'|_\infty,|f'|_\infty,\ldots , |f^{(k-1)}|_\infty, |\rho^{(k-1)}|_\infty).
\]
\end{lemma}
\begin{proof}
	According to \eqref{e:LRn} we have to obtain estimates on all terms 
	\[
	 \cL_{\phi^{(l)}} [f^{(k - l)}] (x), \text{  for } 1 \leq l \leq k.
	\]
The kernel can be expanded using the Faa di Bruno formula
\[
\phi^{(l)} =\sum_{\bj} C_\bj \frac{h(z)}{|z|^{\a} \dd_\rho^{1 + |\bj|}(x,x+z)} \prod_{p = 1}^l \left[ \int_{[0,z]} \rho^{(p)}(x+ \xi) \dxi \right]^{j_p},
\]
where $\bj = (j_1,...,j_l)$ is a multi-index with weight $|\bj|= j_1 + ... + j_l$, and 
\[
1j_1 + 2 j_2 + \ldots + l j_l = l.
\]
Let us introduce into consideration operators corresponding to the summands in the above expansion
\[
\cL_\bj f^{(k-l)}(x) =   \int_\T\frac{h(z) \d_z f^{(k-l)}(x) }{|z|^{\a} \dd_\rho^{1+|\bj|}(x,x+z)} \prod_{p = 1}^l \left[ \int_{[0,z]} \rho^{(p)}(x+ \xi) \dxi \right]^{j_p}  \dz .
\]
Let us consider separately one end-point case when the index reaches its corner value $\bj = (0,...,0,1)$. For this particular index the density receives its maximal derivative:
\[
\begin{split}
\cL_\bj f^{(k-l)}(x) & = \int_\T\frac{h(z)   \d_z f^{(k-l)}(x) }{|z|^{\a} \dd_\rho^{2}(x,x+z)} \int_{[0,z]} \rho^{(l)}(x+ \xi)\dxi     \dz \\
& = \int_\T\frac{h(z)   \d_z f^{(k-l)}(x) }{|z|^{\a} \dd_\rho^{2}(x,x+z)} \int_{[0,z]} (\rho^{(l)}(x+ \xi) - \rho^{(l)}(x))\dxi     \dz \\
&+ \rho^{(l)}(x) \int_\T\frac{h(z)   \d_z f^{(k-l)}(x) }{|z|^{\a-1} \dd_\rho^{2}(x,x+z)} \dz = J_1 + J_2.
\end{split}
\]
The operator involved in $J_2$ is exactly of topological type with $\tau = 2$. So, we apply \eqref{e:Linfty} directly with $r \sim \e$ and $\g = 0$:
\[
|J_2| \lesssim |\rho^{(l)}(x) | \left( \e \sqrt{\dD_\a[ f^{(k-l+1)}] (x)} + c_\e + |f^{(k-l+1)}(x)| \right).
\]
Here and in the following we dismiss all the quantities depending on the  lower order terms $\rhomin,\rhomax,|\rho'|_\infty,|f'|_\infty,\ldots , |f^{(k-1)}|_\infty, |\rho^{(k-1)}|_\infty$. Using 
\[
 \dD_\a[ f^{(k-l+1)}] (x) \lesssim  |f^{(k-l+2)}|_\infty
\]
we see that all the terms with $l=3,...,k-1$ are of lower order (this only applies if $k \geq 4$).  For $l=k$ we have
\[
|J_2| \lesssim \e|\rho^{(k)}(x) | |f''|_\infty + c_\e.
\]
For $l=1$, we have
\[
|J_2| \lesssim  \sqrt{\dD_\a[ f^{(k)}] (x)}  +  |f^{(k)}(x)|.
\]
For $l=2$,
\[
|J_2| \lesssim \e |\rho''|_\infty  \sqrt{\dD_\a[ f^{(k-1)}] (x)} +  c_\e |\rho''|_\infty .
\]
Summing up over $l$ we have
\[
\sum_{l=1}^k |J_2| \lesssim \sqrt{\dD_\a[ f^{(k)}] (x)}  +  |f^{(k)}(x)| + \e|\rho^{(k)}(x) | |f''|_\infty +\e |\rho''|_\infty  \sqrt{\dD_\a[ f^{(k-1)}] (x)} +  c_\e |\rho''|_\infty +c_\e.
\]
As to $J_1$ terms, for all $2\leq l\leq k-2$ we simply estimate 
\[
|J_1|\lesssim |f^{(k-l+1)}|_\infty|\rho^{(l+1)}|_\infty \lesssim C.
\]
For $l=1$,
\[
|J_1|\lesssim  |\rho''|_\infty \int_{|z| \leq \e} \frac{|\d_z f^{(k-1)}(x)| }{|z|^{\a}} \dz + c_\e 
|f^{(k-1)}|_\infty  |\rho'|_\infty  \lesssim \e^{1-\a/2}  |\rho''|_\infty \sqrt{\dD_\a[ f^{(k-1)}] (x)} + c_\e.
\]
Resetting $\e^{1-\a/2} \to \e$ this term has been accounted for. For $l=k-1$, 
\[
|J_1|\lesssim \e |f''|_\infty  \sqrt{\dD_\a[ \rho^{(k-1)}] (x)} +c_\e.
\]
Finally, for $l=k$, we have
\[
|J_1| \lesssim \sqrt{\dD_\a[ \rho^{(k)}] (x)} .
\]
To summarize, the corner-case terms add up to 
\begin{equation}\label{e:capture}
	\begin{split}
	\sum_{l=1}^k |\cL_\bj f^{(k-l)}(x) | &\lesssim \sqrt{\dD_\a[ f^{(k)}] (x)}  + \sqrt{\dD_\a[ \rho^{(k)}] (x)} \\
	& + \e |f''|_\infty  \sqrt{\dD_\a[ \rho^{(k-1)}] (x)} + \e |\rho''|_\infty  \sqrt{\dD_\a[ f^{(k-1)}] (x)} \\ 
	& + \e|\rho^{(k)}(x) | |f''|_\infty + |f^{(k)}(x)|  +  c_\e |\rho''|_\infty +c_\e.
	\end{split}
\end{equation}

Let us now consider off-corner cases, $\bj = (j_1,...,j_{l-1},0)$, $2 \leq l \leq k$ (obviously for $l=1$ there is only one term with $\bj = (1)$ which is a corner case). Since $|\d_z f^{(k-l)}| \leq |z| |f^{(k-1)}|_\infty \lesssim C|z|$, the order of singularity of the kernel becomes $\a + |\bj|$, while the order of the product in the numerator is ${|\bj|}$. So, for $\a \geq 1$ this operator is still hypersingular, which means extra care needed  find additional cancellations. Let us denote
\[
a_p = \int_{[0,z]} \rho^{(p)}(x+ \xi) \dxi, \quad b_p = |z| \rho^{(p)}(x),
\]
and write the product as follows
\[
\begin{split}
	\prod_{p = 1}^{l-1} a_p^{j_p}  & =  a_1^{j_1}\cdots a_{l-2}^{j_{l-2}}  ( a_{l-1}^{j_{l-1}} - b_{l-1}^{j_{l-1}} ) +  a_1^{j_1}\cdots a_{l-3}^{j_{l-3}}  (a_{l-2}^{j_{l-2}} -  b_{l-2}^{j_{l-2}} ) b_{l-1}^{j_{l-1}}   + \ldots \\
	&+ (a_1^{j_1}-b_1^{j_1}) b_2^{j_2}  \cdots  b_{l-1}^{j_{l-1}}  + 	\prod_{p = 1}^{l-1} b_p^{j_p} .
\end{split}
\]
Now, for $p\leq k-2$ we simply use
\[
|a_{p}^{j_{p}} - b_{p}^{j_{p}}| \leq |z|^{1+j_p} |\rho^{(p+1)}|_\infty |\rho^{(p)}|^{j_p-1}_\infty  \lesssim  |z|^{1+j_p} .
\]
So, the product in this case is bounded by $\lesssim |z|^{1+|\bj|}$, and the singularity order reduces to $\a-1 <1$. This, these terms are bounded by $\lesssim C$. 

For $p=k-1$, if $j_{k-1} >0$ we use
\[
|a_{k-1}^{j_{k-1}} - b_{k-1}^{j_{k-1}}| \lesssim |z|^{\frac{\a}{2}+j_{k-1}} \sqrt{ \dD_\a \rho^{(k)}(x)}.
\]
Thus, the order of the product is $\frac{\a}{2}+|\bj|$, and the order of the operator becomes $\a/2<1$. So, this term is bounded by $\lesssim \sqrt{ \dD_\a \rho^{(k)}(x)}$, which has been accounted for earlier. 

 It remains to estimate the integral for the pure $b$-product:
\[
\prod_{p = 1}^{l-1} b_p^{j_p} = |z|^{|\bj|} \prod_{p = 1}^{l-1} (\rho^{(p)}(x))^{j_p}.
\]
The product of densities is obviously subcritical and comes out of the integral. What remains is another topological operator 
\[
\int_\T\frac{h(z) \d_z f^{(k-l)}(x) }{|z|^{\a - |\bj|} \dd_\rho^{1+|\bj|}(x,x+z)}  \dz.
\]
This involves the generalized kernel \eqref{e:taukernel} with $\tau = 1+|\bj|$. Applying estimate \eqref{l:Linfty} with $\g=0$ and fixed absolute $r \sim 1$ we obtain the bound
\[
\lesssim  \sqrt{\dD_\a[ f^{(k-l+1)}] (x)} +  |f^{(k-l)}|_{\infty} + |f^{(k-l+1)}(x)|.
\]
Recalling that we are in the range $2\leq l\leq k$, we have 
\[
|f^{(k-l)}|_{\infty} + |f^{(k-l+1)}(x)|\lesssim 1,
\]
while for $l>2$ the dissipative term is also subcritical, and for $l=2$, term $\sqrt{\dD_\a[ f^{(k-1)}] (x)} $ has been accounted for.

Thus, estimate \eqref{e:k3} captures all the terms we encountered. It remains to notice that for $k \geq 3$, the second derivative terms become of lower order, and we can set $\e\sim 1$ to obtain \eqref{e:k3}. For $k=2$ we obtain \eqref{e:k2}.  This finishes the proof. 
\end{proof}

Finally, we have needed estimates on the full under top derivatives  $(\cL_\phi f)^{(l)}$, $k\geq 2$, with the use of above results. So, for any $k\geq 2$ and with the same convention of using $\lesssim$ up to a constant $C(\rhomin,\rhomax,|\rho'|_\infty,|f'|_\infty,\ldots , |f^{(k-1)}|_\infty, |\rho^{(k-1)}|_\infty)$,  we deduce from \lem{l:Linfty} with $\g = 0$ and $r\sim 1$,
\[
\begin{split}
|\cL_\phi (f^{(k-1)}) (x)| & \lesssim \sqrt{\dD_\a[ f^{(k)}] (x)} + |f^{(k)}(x)|+1 \\
|\cL_\phi (f^{(k-2)}) (x)| & \lesssim \sqrt{\dD_\a[ f^{(k-1)}] (x)} + 1 \\
|\cL_\phi (f^{(l)}) (x)| & \lesssim 1, \qquad 0\leq l\leq k-3 \\
\end{split}
\]
In combination with the commutator estimates established in \lem{l:kcomm} we obtain the following lemma.

\begin{lemma}\label{l:Lkbound}
	For any smooth function $f$ and $k \geq 2$, we have
	\begin{align}
	|(\cL_\phi f)^{(k-1)} (x)| & \lesssim \sqrt{\dD_\a[ f^{(k)}] (x)} + |f^{(k)}(x)| +\sqrt{\dD_\a[ f^{(k-1)}] (x)}+\sqrt{\dD_\a[ \rho^{(k-1)}] (x)}+1  \label{e:LkboundA} \\
	|(\cL_\phi f)^{(k-2)} (x)| & \lesssim \sqrt{\dD_\a[ f^{(k-1)}] (x)} + 1 \label{e:LkboundB} \\
	|(\cL_\phi f)^{(l)} (x)| & \lesssim 1, \qquad 0\leq l\leq k-3, \label{e:LkboundC}
	\end{align}
with $\lesssim$ meaning  up to a constant factor 
\[
C = C(\rhomin,\rhomax,|\rho'|_\infty,|f'|_\infty,\ldots , |f^{(k-1)}|_\infty, |\rho^{(k-1)}|_\infty).
\]
\end{lemma}

\end{document}